\theoremstyle{plain}
\newtheorem{thm}{Theorem}[section]
\newtheorem{lem}[thm]{Lemma}
\newtheorem{cor}[thm]{Corollary}
\newtheorem{prop}[thm]{Proposition}
\newtheorem{conj}[thm]{Conjecture}
\theoremstyle{definition}
\newtheorem{defn}[thm]{Definition}
\newtheorem{ex}[thm]{Example}
\newtheorem{question}[thm]{Question}
\newtheorem{rmk}[thm]{Remark}
\newtheorem{rmks}[thm]{Remarks}
\newcommand{\cd}{{\rm cd}}
\newcommand{\im}{{\rm im}}
\newcommand{\Spec}{{\rm Spec \,}}
\newcommand{\tr}{{\rm tr}}
\newcommand{\trg}{{\rm trg}}
\newcommand{\Gal}{{\rm Gal}}
\newcommand{\sC}{{\mathcal C}}
\newcommand{\sE}{{\mathcal E}}
\newcommand{\sR}{{\mathcal R}}
\newcommand{\A}{{\mathbb A}}
\newcommand{\C}{{\mathbb C}}
\newcommand{\F}{{\mathbb F}}
\newcommand{\G}{{\mathbb G}}
\renewcommand{\P}{{\mathbb P}}
\newcommand{\Q}{{\mathbb Q}}
\newcommand{\U}{{\mathbb U}}
\newcommand{\Z}{{\mathbb Z}}
\def\NDT{{\fontencoding{T5}\selectfont Nguy\~ \ecircumflex n Duy T\^an}}
\def\NQT{{\fontencoding{T5}\selectfont Nguy\~ \ecircumflex n Qu\'\ocircumflex{}c Th\'\abreve{}ng}} 
\begin{document}
\title{Triple Massey Products and Galois theory} 

\dedicatory{Dedicated to H\'el\`ene Esnault}
 \author{ J\'an Min\'a\v{c} and \NDT}
\address{Department of Mathematics, Western University, London, Ontario, Canada N6A 5B7}
\email{minac@uwo.ca}
 \address{Department of Mathematics, Western University, London, Ontario, Canada N6A 5B7 and Institute of Mathematics, Vietnam Academy of Science and Technology, 18 Hoang Quoc Viet, 10307, Hanoi - Vietnam } 
\email{dnguy25@uwo.ca}
\thanks{
The first author is supported in part by the Natural Sciences and Engineering Research Council of Canada (NSERC) grant R0370A01. The second author is supported in part by the National Foundation for Science and Technology Development (NAFOSTED)}

 \begin{abstract}
 We show that any triple Massey product with respect to prime 2 contains 0 whenever it is defined over any field. This extends the theorem of M. J. Hopkins and K. G. Wickelgren, from global fields to any fields. This is the first time when the vanishing of any $n$-Massey product for some prime $p$ has been established for all fields. This leads to a strong restriction on the shape of relations in the maximal pro-2-quotients of absolute Galois groups, which was out of reach until now. We also develop an extension of  Serre's 
transgression method to detect triple commutators in relations of pro-$p$-groups, where we do not require that all cup products vanish.
We prove that all $n$-Massey products, $n\geq 3$, vanish for  general Demushkin groups. We formulate and provide evidence for two conjectures related to the structure of absolute Galois groups of fields. In each case when these conjectures can be verified, they have some interesting concrete Galois theoretic consequences. They are also related to the Bloch-Kato conjecture.

\end{abstract}
\maketitle
\section{Introduction}
A major problem in Galois theory is the characterization of profinite groups which are realizable as absolute Galois groups of fields. This is a difficult problem, and in general little is known. In our paper we provide a definite contribution valid for all fields. 
 
In 1967 A. Weil in \cite{Wei} describing Artin's first result in the theory of real fields says "Even now, this is an altogether isolated result of great depth, whose significance for the future is not to be assessed lightly." In the classical papers \cite{AS1,AS2} published in 1927, E. Artin and O. Schreier went on with developing a theory of real fields and showed in particular that the only non-trivial finite subgroups of absolute Galois groups are cyclic groups of order 2. In \cite{Be}, E. Becker developed some parts of Artin-Schreier theory by replacing separable closures of fields by maximal $p$-extensions of fields. Here and below, by $p$ we mean a prime number. 
The notions of projective profinite fields and pseudo algebraically closed (PAC) fields are now basic notions in Galois theory. (See \cite[Chapter 11]{FJ} and also the definition after Conjecture~\ref{conj:vanishing n-Massey} and the beginning of Section 4 below.) In \cite{Ax} it was proved that if a field $F$ is PAC  then its absolute Galois group is projective. However the actual name PAC was introduced only later by G. Frey in \cite{Frey} following a suggestion of M. Jarden. A nice proof due to D. Haran is presented in \cite[Theorem 11.6.2]{FJ}. In fact in \cite{LvdD} A. Lubotzky and L. van den Dries proved that for any given projective group $G$ there exists a PAC field such that its absolute Galois group is isomorphic to $G$. See also \cite[Corollary 23.1.2]{FJ}. 
Further, Y. L. Ershov in \cite{Er} showed that if finitely many profinite groups are absolute Galois groups, so is their free product. (See also I. Efrat and D. Haran's related result in \cite{EH} concerning  pro-$p$-groups as absolute Galois groups.)
These results above concerning projective groups and free products were generalized in a far-reaching way to ''profinite groups that are relatively projective with respect to appropriate subsets of closed subgroups'' in \cite{Koe2} and \cite{HJP}. 
In the remarkable paper \cite{Koe1} in 2001, J. Koenigsmann provided a classification of solvable absolute Galois groups. In \cite{MS1} it was shown that orderings of fields can be detected already by much smaller Galois $2$-extensions than maximal 2-extensions. In 1996,  \cite{MS2}, using Villegas' results in \cite{Vi} provided a structural result of the quotient of absolute Galois groups by the third 2-descending series. These results were extended to analogous results for $p$-descending series and $p$-Zassenhaus series in \cite{EM1,EM2}. These are a few fundamental results on the structure of absolute Galois groups of general fields.

However in a recent spectacular development, the Bloch-Kato conjecture was proved by M. Rost and V. Voevodsky. (See \cite{Voe}.) These are very strong restrictions on the structure of absolute Galois groups but these results do not give directly structural results of absolute Galois groups. However in \cite{MS2, EM1, EM2}, the previous results by A. Merkurjev and A. Suslin \cite{MSu}) on the Bloch-Kato conjecture in degree 2 were used. 
It is a challenging important problem both for the structure of absolute Galois groups as well as for understanding the Bloch-Kato conjecture better, to provide a direct precise translation of the Bloch-Kato conjecture on the group-theoretical properties of absolute Galois groups. 
Building on the work of a number of mathematicians (\cite{Dwy75, DGMS, Ef, GLMS, EM1, EM2, HW, MS2, MSu,Vi,Voe}) we formulate here two other fundamental and strong conjectures which we call the "Vanishing $n$-Massey Conjecture" and the "Kernel $n$-Unipotent Conjecture".

The main objective of this paper is to prove the Vanishing $3$-Massey Conjecture for prime $2$ for all fields and to derive strong consequences for the structure of relations in absolute Galois groups of all fields or their maximal pro-$2$ quotients. 
Let us first recall briefly the notion of triple Massey products (see Section 2 for  more detail on Massey products). Let $\sC^\bullet$ be a differential graded algebra with differential $\partial: C^\bullet \to C^{\bullet+1}$ and homology $H^\bullet$. Suppose that $a,b,c\in H^1$ such that $ab=bc=0$.  We can choose $A,B,C$ in $\sC^1$ representing $a,b,c$ respectively. Since $ab=0$, there is $E_{ab}$ such that $\partial E_{ab}=AB$, similarly there is $E_{bc}$ such that $\partial E_{bc}=BC$.  Note that $\partial(E_{ab}C+A E_{bc})=0$, hence $E_{ab}C+A E_{bc}$ represents an element of $H^2$. The set of all $E_{ab}C+A E_{bc}$ obtained in this manner is defined to be the triple Massey product $\langle a,b,c\rangle \subset H^2$. We say  that the triple Massey product vanishes if it contains 0. 

Now let $F$ be a field of characteristic $\not=2$ and let $G=G_F(2)$ be the maximal pro-2  quotient of the absolute  Galois group $G_F$ of $F$. Let $\sC^\bullet =(\sC^\bullet(G,\F_2),\partial)$ denote the differential graded algebra of $\F_2$-inhomogeneous cochains in the continuous group cohomology of $G$ (see the first paragraph in Section~\ref{sec:3} for more details).  For any $a\in F^*=F\setminus \{0\}$, let $\chi_a$ denote the corresponding character  via the Kummer map $F^*\to H^1(G,\F_2)$. 
In the work of M. J. Hopkins and K. G. Wickelgren \cite{HW}, the following result was proved.
\begin{thm}[{\cite[Theorem 1.2]{HW}}]
\label{thm:HW}
 Let  $F$ be a global field of characteristic $\not=2$ and $a,b,c\in F^*$. The triple Massey product $\langle \chi_a,\chi_b,\chi_c\rangle$ contains 0 whenever it is defined.
\end{thm}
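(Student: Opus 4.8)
The plan is to translate the assertion into a Galois-theoretic lifting problem and then exploit the arithmetic of global fields through the local-global principle. First I would recall, following Dwyer \cite{Dwy75}, the dictionary between Massey products and unipotent representations: writing $U = U_4(\F_2)$ for the group of upper unitriangular $4\times 4$ matrices over $\F_2$ and $Z$ for its center (the $(1,4)$-entry), the product $\langle \chi_a,\chi_b,\chi_c\rangle$ is defined precisely when $\chi_a\cup\chi_b = 0 = \chi_b\cup\chi_c$, and it contains $0$ exactly when there is a continuous homomorphism $\rho\colon G_F \to U$ whose three superdiagonal entries are $\chi_a,\chi_b,\chi_c$. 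Equivalently, choices of cochains $E_{ab},E_{bc}$ assemble into a homomorphism $\bar\rho\colon G_F\to U/Z$, and the class $E_{ab}\cup\chi_c + \chi_a\cup E_{bc}\in H^2(G_F,\F_2)$ is the obstruction to lifting $\bar\rho$ across $U\to U/Z$. Since changing the cochains alters this class only by an element of $\chi_a\cup H^1(G_F,\F_2) + H^1(G_F,\F_2)\cup\chi_c$, the theorem becomes the statement that some (hence the) representative of the Massey product lies in that subgroup.

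Second, I would set up the local-global machinery. Identifying $\F_2$ with $\mu_2$, the defining hypotheses $\chi_a\cup\chi_b=0$ and $\chi_b\cup\chi_c=0$ assert the vanishing of quaternion symbols in the $2$-torsion of the Brauer group, so by Albert--Brauer--Hasse--Noether they hold globally iff they hold at every completion $F_v$. One then records that the obstruction class $m = E_{ab}\cup\chi_c+\chi_a\cup E_{bc}$ is detected by its localizations: the map $H^2(G_F,\mu_2)\to\bigoplus_v H^2(G_{F_v},\mu_2)$ is injective with image the classes whose local invariants sum to zero, by the Poitou--Tate sequence together with reciprocity. The next step is the local case: over each completion $F_v$ one checks directly that the corresponding triple Massey product contains $0$, using that $G_{F_v}(2)$ is either trivial, of cohomological dimension $\le 1$, or a Demushkin group, for which the required lift can be produced explicitly, with the archimedean and residue-characteristic-$2$ places treated separately.

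Finally comes the gluing, which I expect to be the crux. Locally the obstruction is killable: for each $v$ there are characters $\psi_v,\phi_v\in H^1(G_{F_v},\F_2)$ with $m_v = \psi_v\cup\chi_c^{\,v} + \chi_a^{\,v}\cup\phi_v$. To conclude globally I would realize compatible global characters $\psi,\phi\in H^1(G_F,\F_2)=F^*/(F^*)^2$ with the prescribed local behaviour, so that $m + \psi\cup\chi_c + \chi_a\cup\phi$ has vanishing invariant at every place and hence vanishes by injectivity. Producing such global classes from prescribed local square-class data is exactly a Grunwald--Wang problem at the prime $2$, and the genuine difficulty is its well-known exceptional behaviour for $p=2$: one cannot freely prescribe the local conditions, so the naive gluing can fail at a controlled set of special places. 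The hard part is therefore to track the reciprocity constraint (the local invariants summing to zero) alongside the Grunwald--Wang exception, and to show that the residual freedom in choosing $\psi,\phi$ --- together with the contribution of $\chi_a,\chi_c$ themselves to the local symbols --- still suffices to annihilate $m$ at all places simultaneously. I would handle this by a case analysis isolating the special places and using Chebotarev density to adjust $\psi,\phi$ away from them while respecting the global reciprocity law.
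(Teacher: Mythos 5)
Your strategy is genuinely different from the one this paper uses, and it is worth comparing the two. What you propose is essentially the original Hopkins--Wickelgren route: translate the problem into a lifting problem for $\U_4(\F_2)$ via Dwyer's correspondence, solve it locally at every place (where $G_{F_v}(2)$ is free or Demushkin), and then glue using the injectivity of $H^2(G_F,\mu_2)\to\bigoplus_v H^2(G_{F_v},\mu_2)$ together with a Grunwald--Wang-type realization of characters with prescribed local behaviour. That can be made to work for global fields, but the step you yourself identify as the crux --- adjusting $\psi,\phi$ against the reciprocity constraint and the exceptional case of Grunwald--Wang at $p=2$ --- is left as a plan rather than carried out, and it is exactly where the real work of that approach lives. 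The paper takes a different and strictly stronger path: it proves the vanishing for \emph{every} field of characteristic $\neq 2$ (Theorem~\ref{thm:vanishing}), so the global-field case is a corollary and no local-global analysis is needed at all. Concretely, the first proof in Section~5 starts from the two norm conditions forced by $\chi_a\cup\chi_b=\chi_b\cup\chi_c=0$, namely $b=\alpha_1^2-a\alpha_2^2=\gamma_1^2-c\gamma_2^2$, and writes down an explicit $F$-rational point of the splitting variety $X_{a,b,c}$ in each of a small number of cases (e.g.\ $x=2(\alpha_1+\gamma_1)$, $y_1=\alpha_1+\gamma_1$, $y_2=\alpha_2$, $y_3=\gamma_2$, $y_4=0$); the second proof in Section~6 instead constructs explicit Galois extensions with group $\U_4(\F_2)$ or suitable subgroups, following \cite{GLMS}, realizing the required unipotent lift directly. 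What your approach buys is a conceptual picture tied to the arithmetic of global fields and a template that generalizes to other local-global statements; what the paper's approach buys is uniformity over all fields and the complete elimination of the Poitou--Tate, reciprocity, and Grunwald--Wang machinery --- the norm conditions alone already produce a global solution. If you want to complete your argument as written, you must still supply the case analysis at the special places; alternatively, the explicit point above shows that the obstruction you are trying to kill place-by-place can be killed globally in one stroke.
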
 
 
In our paper we show that triple Massey products with respect to prime 2 vanish over any field $F$. As it follows from Example~\ref{ex:free} and from Witt's Theorem (see \cite{Wi}, \cite[Theorem 9.1]{Ko2}) that  $n$-fold Massey products vanish with respect to $2$ if ${\rm char}(F) =2$. So we can assume that the characteristic of $F$ is not 2.
\begin{thm}
\label{thm:vanishing}
 Let $F$ be an arbitrary field of characteristic $\not=2$, $a,b,c\in F^*$. The triple Massey product $\langle \chi_a,\chi_b,\chi_c\rangle$ contains 0 whenever it is defined.
\end{thm}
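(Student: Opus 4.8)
The plan is to convert the statement into the existence of a unipotent Galois representation and then to build that representation directly from the splitting of the two quaternion algebras in play. Work with $G=G_F(2)$ and $\sC^\bullet=\sC^\bullet(G,\F_2)$, and recall that inflation embeds $H^2(G,\F_2)$ into $H^2(G_F,\F_2)\cong{\rm Br}(F)[2]$, carrying $\chi_a\cup\chi_b$ to the class of the quaternion algebra $(a,b)$; in particular $\chi_a\cup\chi_b=0$ if and only if $(a,b)$ splits. By Dwyer's dictionary \cite{Dwy75}, writing $U_4(\F_2)$ for the group of upper unitriangular $4\times4$ matrices over $\F_2$, the product $\langle\chi_a,\chi_b,\chi_c\rangle$ is defined and contains $0$ precisely when the triple $(\chi_a,\chi_b,\chi_c)$ lifts to a continuous homomorphism $\rho\colon G\to U_4(\F_2)$ with these three characters as its super-diagonal entries. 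Unwinding the matrix identity, such a $\rho$ consists of $\F_2$-cochains $E_{ab},E_{bc}$ with $\partial E_{ab}=AB$ and $\partial E_{bc}=BC$ --- which exist exactly because the product is defined --- together with a cochain trivializing $E_{ab}C+AE_{bc}$. Hence the whole theorem reduces to: one may choose $E_{ab}$ and $E_{bc}$ so that the class $[E_{ab}C+AE_{bc}]\in H^2$ vanishes.

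I would first clear away the degenerate configurations. If $b$ is a square then $\chi_b=0$; taking $B=0$ forces $E_{ab}=E_{bc}=0$ and the defining class is $0$, so the product contains $0$ outright. The cases in which $a$ or $c$ is a square, or in which $\chi_a,\chi_b,\chi_c$ satisfy a linear relation, reduce similarly or to the known vanishing for free pro-$2$ groups and Heisenberg quotients (see Example~\ref{ex:free}). This leaves the essential case: $b$ is not a square, so $K=F(\sqrt b)$ is a genuine quadratic extension, and the defining hypotheses $(a,b)=(b,c)=0$ say exactly that both $a$ and $c$ are norms from $K$.

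The core is the construction of $\rho$ in this case. I would regard $\rho$ as a simultaneous gluing of the two Heisenberg representations $G\to U_3(\F_2)$ with super-diagonals $(\chi_a,\chi_b)$ and $(\chi_b,\chi_c)$ --- these exist because $(a,b)=0$ and $(b,c)=0$ --- along their common middle character $\chi_b$; the obstruction to assembling them into a $U_4(\F_2)$-representation is exactly the class $[E_{ab}C+AE_{bc}]$. As $E_{ab},E_{bc}$ vary over their admissible choices this class moves within its indeterminacy $\chi_a\cup H^1(G,\F_2)+H^1(G,\F_2)\cup\chi_c$, so it is enough to show that for one choice the obstruction is a sum of quaternion symbols, each sharing a slot with $a$ or with $c$. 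Here the norm conditions enter concretely: witnesses for $a,c\in N_{K/F}(K^*)$ furnish explicit representatives $E_{ab},E_{bc}$, and a computation in ${\rm Br}(F)[2]$ should then exhibit $[E_{ab}C+AE_{bc}]$ as such a sum, placing it inside the indeterminacy where it can be annihilated.

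The main obstacle is precisely this last computation. For global fields Hopkins and Wickelgren (Theorem~\ref{thm:HW}) could detect vanishing in $H^2$ through local-global principles, a tool unavailable over an arbitrary field; one must instead show that the obstruction is killed by a purely field-theoretic ``common slot'' argument flowing from the splitting of $(a,b)$ and $(b,c)$ alone. Managing the interaction of the two Heisenberg data through the shared character $\chi_b$ --- equivalently, performing the norm computation over $K=F(\sqrt b)$, where $\chi_b$ becomes trivial, and descending it to $F$ by a restriction--corestriction argument --- is the crux, and is what should make the result uniform across all fields.
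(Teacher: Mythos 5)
Your reduction is sound as far as it goes: by Theorem~\ref{thm:Dwyer} and Corollary~\ref{cor:vanishing-lifting} the claim is equivalent to lifting the two Heisenberg representations attached to $(\chi_a,\chi_b)$ and $(\chi_b,\chi_c)$ to a single homomorphism $G\to\U_4(\F_2)$, and by Remark~\ref{rmk:indet} it suffices to place one value of the product inside $\chi_a\cup H^1+H^1\cup\chi_c$. But this reduction is where the paper \emph{starts}, not where it ends, and your argument stops exactly where the theorem's content begins. The sentence asserting that a computation in ${\rm Br}(F)[2]$ ``should then exhibit'' the obstruction as a sum of symbols sharing a slot with $a$ or $c$ \emph{is} the theorem; no mechanism is given for producing the required defining system from the norm data, and the closing suggestion of a restriction--corestriction argument over $K=F(\sqrt b)$ cannot work as stated, since $\mathrm{cor}_{K/F}\circ\mathrm{res}_{K/F}$ is multiplication by $2=0$ on $H^2(G_F,\F_2)$. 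What the paper actually supplies at this point is a concrete witness: in the first proof, writing $b=\alpha_1^2-a\alpha_2^2=\gamma_1^2-c\gamma_2^2$ (from $\chi_a\cup\chi_b=\chi_b\cup\chi_c=0$) and setting $x=2(\alpha_1+\gamma_1)$, $y_1=\alpha_1+\gamma_1$, $y_2=\alpha_2$, $y_3=\gamma_2$, $y_4=0$ gives an explicit $F$-point of the Hopkins--Wickelgren splitting variety $X_{a,b,c}$; in the second proof, explicit Galois extensions $L/F$ with group $\U_4(\F_2)$ or suitable subgroups are built case by case from the constructions of \cite{GLMS} and mapped into $\U_4(\F_2)$ by exhibiting matrices satisfying the defining relations. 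Some such identity or construction is indispensable and is absent from your proposal.

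A second, smaller gap: you dismiss the configurations in which $\chi_a,\chi_b,\chi_c$ are linearly dependent in $H^1(G,\F_2)$ as reducing to ``known vanishing for free pro-$2$ groups and Heisenberg quotients.'' They do not. For instance the case $a\equiv b\equiv c\bmod (F^*)^2$ requires producing a $C_4$-extension of $F$ containing $F(\sqrt b)$, which exists only because $(b,b)_F=0$ forces $b\in N_{F(\sqrt b)/F}(F(\sqrt b)^*)$; the cases $c\equiv a$ and $c\equiv ab$ modulo squares require the group $G_1$ of \cite{GLMS}, and the case $c\equiv b$ requires a $D_4$-extension. These occupy five of the six cases of the paper's second proof, and each needs its own construction rather than a formal reduction.
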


This has remarkable consequences for the structures of absolute Galois groups $G_F$ and their maximal pro-2-quotients $G_F(2)$. We state our results for finitely generated pro-2-groups but our methods can be used also in the case of infinitely generated pro-2-groups with several relations. In Section 7 we also consider pro-$p$-groups for $p$ possibly not equal to 2. The reason for our restriction in the remainder of the paper for considering $p=2$ is that we do not yet have complete results for triple Massey products for $p>2$. This is work in progress. (See \cite{GMT}.) 
The results on the shape of relations of finitely generated pro-2-groups of the form $G_F(2)$ for some field $F$ are  fundamental results extending the classical results of S. P. Demushkin, K. Iwasawa, U. Jannsen, H. Koch, J. Labute, J.-P. Serre, I. Shafarevich and K. Wingberg. (See e.g., \cite{De1, De2, I,  JaWi, Ko1, Ko2, La2, Se, Sh}.)
Thus we provide  strong restrictions on the structure of  groups $G_F(2)$.
Before stating the results we illustrate them with an example. Examining the classification of Demushkin group by Labute in \cite{La2} one sees that $G_F(p)$ always has a presentation where the  generating relation is a product of commutators between generators and $p$-powers of generators. (If $G_F(p)$ for a local field is not a Demushkin group, then it is free pro-$p$.) Already in the paper \cite{CEM}, Section 9, it  was shown that $G:=S/\langle [[x_1,x_2],x_3]\rangle$, where $S$ is a free pro-2-group on generators $x_1,\ldots,x_n$, $n\geq 3$, cannot be an absolute Galois group of any field. (In this paper, for $r\in S$, we denote  by $\langle r\rangle$ the closed normal subgroup of $S$ generated by $r$.) 
One can also deduce, for example, that $G$ as above cannot be isomorphic  to $G_F(2)$ for any field $F$. However relations where simple commutators are combined with triple ones like $r=[x_4,x_5][[x_2,x_3],x_1]$ are much harder to exclude, and one could not show that 
$G=S/\langle r \rangle$, $S$ is a free pro-2-group on $n$ generators $x_1,\ldots, x_n$ with $n\geq 5$, is not isomorphic to  $G_F(2)$ for any field $F$ until this work.  In Examples~\ref{first example} and \ref{ex:ob}, we deal with this group in a detailed way, and in particular we show that $G\not\simeq G_F(2)$ for  any field $F$. The next Theorem~\ref{thm:main ob1} and Theorem~\ref{thm:main ob2} are a vast generalization of this example.

That some conditions are necessary, one can see for example from the following example. Consider a free pro-2-group $S$ on generators $x_1,x_2,x_3$ and 
\[
 G=S/\langle r\rangle, \quad r=[x_1,x_3].
\]
 However now consider three new generators, $y_1=x_1x_2^{-1}$, $y_2=x_2$, $y_3=x_3$ of $S$. Then 
\[
 \begin{aligned}
  r&=[y_1y_2,y_3]=[y_1,y_3]\cdot [[y_1,y_3],y_2]\cdot [y_2,y_3]\\
&= [y_1,y_3]\cdot [y_2,y_3]\cdot [[y_1,y_3],y_2] \cdot r^\prime,
 \end{aligned}
\]
where $r^\prime$ is an element in  the 4-th term $S_{(4)}$ in the 2-Zassenhaus filtration of $S$ defined in Section 3 after the proof of Lemma~\ref{lem:Sharifi}. Observe now first that 
\[
 G_1=S_1/\langle [x_1,x_3]\rangle,
\]
where $S_1$ is a free pro-2-group on generators $x_1, x_3$, is realizable as $G_{F_1}(2)$, over the field $\C((X_1))((X_2))$ of iterated power series (see \cite[Corollary 3.9, part (2)]{Wa}). Also $G_2:=S_2$, the free pro-2 group on $x_2$ is realizable as $G_{F_2}(2)$ where $F_2=\C((X_2))$. By \cite[Theorem 3.6]{JW}, we see that their free product in the category of pro-$2$-groups
\[
 G=G_1\ast G_2,
\]
is also realizable as $G_F(2)$ for some field $F$. Hence
\[
 G=S/\langle [y_1,y_3]\cdot [y_2,y_3]\cdot [[y_1,y_3],y_2] \cdot r^\prime\rangle,
\]
where $S$ is a free pro-2-group on generators $y_1,y_2,y_3$, is of the form $G_F(2)$. Hence we see that some conditions as in our Theorems~\ref{thm:main ob1} and \ref{thm:main ob2} are necessary to guarantee the truth of these theorems. Therefore these conditions look like natural conditions. It is clear that they are very strong conditions and they extend some results on the shape of relations of $G_F(2)$ from local fields to all fields.

In the theorems below we use the following notation.
Let $(I,<)$ be a well-ordered set. 
Let $S$ be a free pro-2 group on  a set of generators $x_i, i\in I$ (see \cite[Definition 3.5.14]{NSW}). Let $S_{(i)}$, $i=1,2,\ldots$ be the 2-Zassenhaus filtration of $S$. (See Section 3 for a definition of Zassenhaus filtration.) 
Then any element $r$ in $S_{(2)}$ may be written uniquely as
\begin{equation}
r= \prod_{i\in I} x_i^{2a_i} \prod_{i<j}[x_i,x_j]^{b_{ij}}\prod_{i< j, k\leq j}[[x_i,x_j],x_k]^{c_{ijk}}r^\prime,  
\label{modulo S4}
\end{equation}
where $a_i,b_{ij},c_{ijk}\in \{0,1\}$ and $r^\prime\in S_{(4)}$. For convenience we call (\ref{modulo S4}) the canonical decomposition modulo $S_{(4)}$ of $r$ (with respect to the basis $(x_i)$) and we also set $u_{ij}=b_{ij}$ if $i<j$, and  $u_{ij}=b_{ji}$ if $j<i$.

\begin{thm}
\label{thm:main ob1}
Let $\sR$ be a set of elements in $S_{(2)}$. Assume that there exists an element $r$ in $\sR$ and distinct indices $i,j,k$ with $i<j, k<j$ such that: 
\begin{enumerate}
\item[(i)]  In (\ref{modulo S4})  the canonical decomposition modulo $S_{(4)}$ of $r$, 
$a_k=a_j=u_{ij}=u_{kj}=u_{ki}=u_{kl}=u_{jl}=0$  for all $l\neq i,j,k$, and  $c_{ijk}\not=0$; and
\item[(ii)]  for every $s\in \sR$ which is different from  $r$,  the factors $[x_k,x_i]$, $[x_i,x_k]$ and $[x_i,x_j]$ do not occur in the canonical decomposition modulo $S_{(4)}$ of $s$. 
\end{enumerate}
Then $G=S/\langle \sR \rangle$ is not realizable as $G_F(2)$ for any field $F$.
\end{thm}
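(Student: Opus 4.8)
The plan is to argue by contradiction. Suppose $G=S/\langle\sR\rangle\simeq G_F(2)$ for some field $F$. Since every class in $H^1(G,\F_2)$ is a Kummer character, it suffices to exhibit a triple Massey product over $F$ that is defined but does not contain $0$: this contradicts Theorem~\ref{thm:vanishing} when $\Char F\neq 2$, and the remark preceding it (Witt's theorem, via Example~\ref{ex:free}) when $\Char F=2$, so the contradiction is valid for every field. Identifying $H^1(G,\F_2)$ with the span of the dual basis $\{\chi_l\}$ to the generators $\{x_l\}$ (legitimate since $\sR\subset S_{(2)}$ lies in the Frattini subgroup), the product I would use is $\langle\chi_k,\chi_i,\chi_j\rangle\subset H^2(G,\F_2)$, chosen so that the two cup-product conditions needed for it to be defined match the commutators controlled in hypothesis~(ii).

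The key tool is the Magnus--Fox dictionary translating the coefficients in the canonical decomposition~\eqref{modulo S4} into cohomology operations. Embedding $S$ into the power series ring $\F_2\langle\langle X_l\rangle\rangle$ by $x_l\mapsto 1+X_l$, the degree-$2$ and degree-$3$ coefficients of a relation $s\in S_{(2)}$ compute, respectively, cup products and triple Massey products evaluated against $s$. Following Dwyer \cite{Dwy75}, $0\in\langle\chi_k,\chi_i,\chi_j\rangle$ if and only if there is a homomorphism $\phi\colon S\to U_4$ into the group of $4\times 4$ upper unitriangular matrices over $\F_2$, with superdiagonal characters $\chi_k,\chi_i,\chi_j$, that kills every $s\in\sR$. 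Writing $y_l=\phi(x_l)_{13}$, $z_l=\phi(x_l)_{24}$ for the free off-superdiagonal parameters, one has for $s\in S_{(2)}$ that $\phi(s)_{13}=(\chi_k\cup\chi_i)(s)$, $\phi(s)_{24}=(\chi_i\cup\chi_j)(s)$, and
\[
\phi(s)_{14}=\varepsilon_{kij}(s)+\sum_{l}y_l\,(\chi_l\cup\chi_j)(s)+\sum_{l}z_l\,(\chi_k\cup\chi_l)(s),
\]
where $\varepsilon_{kij}(s)$ is the coefficient of the monomial $X_kX_iX_j$ in $s$.

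With this in hand the computation is short. First, hypotheses (i) and (ii) force $\chi_k\cup\chi_i=\chi_i\cup\chi_j=0$ in $H^2(G,\F_2)$: these cup products are detected by $u_{ki}$ and $u_{ij}$ on $r$ (both zero by (i)) and by the same entries on every $s\neq r$ (zero because $[x_k,x_i],[x_i,x_k],[x_i,x_j]$ are absent by (ii)). Hence $\phi(s)_{13}=\phi(s)_{24}=0$ automatically and $\langle\chi_k,\chi_i,\chi_j\rangle$ is defined. Next I would evaluate the obstruction on $s=r$. By (i) every cup product appearing in the sums vanishes on $r$: indeed $(\chi_l\cup\chi_j)(r)=u_{lj}=0$ for $l\neq j$ and $=a_j=0$ for $l=j$, while $(\chi_k\cup\chi_l)(r)=u_{kl}=0$ for $l\neq k$ and $=a_k=0$ for $l=k$. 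Thus the entire indeterminacy contribution to $\phi(r)_{14}$ is $0$, independently of the $y_l,z_l$, and $\phi(r)_{14}=\varepsilon_{kij}(r)=c_{ijk}=1$. Consequently no admissible $\phi$ can kill $r$, so $0\notin\langle\chi_k,\chi_i,\chi_j\rangle$ --- the desired contradiction.

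The step I expect to be the main obstacle is establishing the dictionary rigorously, in particular the identity $\varepsilon_{kij}(r)=c_{ijk}$. This requires checking that among all canonical triple commutators $[[x_a,x_b],x_c]$ (with $a<b$, $c\le b$) the monomial $X_kX_iX_j$ is produced only by $[[x_i,x_j],x_k]$ --- a short combinatorial verification using $i<j$, $k<j$ and distinctness --- and that the quadratic factors $x_l^{2}$ and $[x_a,x_b]$ contribute no monomial in three distinct letters. One must also confirm the $U_4$-criterion in the continuous cochain setting and that the displayed formula for $\phi(s)_{14}$ is exact for $s\in S_{(2)}$ (the top-corner parameters $w_l=\phi(x_l)_{14}$ drop out, since $s$ has no linear term). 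Hypotheses (i) and (ii) are calibrated precisely so that, after all this, the indeterminacy dies on $r$ while the value $c_{ijk}$ survives.
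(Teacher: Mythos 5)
Your proposal is correct and follows essentially the same route as the paper: combine the vanishing theorem for $G_F(2)$ with Dwyer's unipotent-lifting criterion, observe that hypotheses (i)--(ii) force $\chi_k\cup\chi_i=\chi_i\cup\chi_j=0$ (so the product is defined) and kill the entire indeterminacy when evaluated against $r$, and then detect the coefficient $c_{ijk}$ in the $(1,4)$-entry. The only organizational difference is that you parametrize all admissible lifts $S\to \U_4(\F_2)$ directly via the Magnus expansion, whereas the paper first passes to the one-relator quotient $S/\langle r\rangle$ by naturality of Massey products and then computes the (uniquely defined) value there using a single explicit representation together with the transgression lemma (Lemma~\ref{lem:Sharifi}); the underlying computation, including the verification that only $[[x_i,x_j],x_k]$ contributes the relevant degree-$3$ coefficient, is the same.
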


\begin{thm}
\label{thm:main ob2}
Let $\sR$ be a set of elements in $S_{(2)}$. Assume that there exists an element $r$ in $\sR$ and distinct indices $i<j$ such that: 
\begin{enumerate}
\item[(i)]  In (\ref{modulo S4}) the canonical decomposition modulo $S_{(4)}$ of $r$,  
$a_i=a_j=u_{ij}=u_{il}=u_{jl}=0$, for all $l\not=i,j$ and $c_{iji}\not=0$ (respectively, $c_{ijj}\not=0$); and
\item[(ii)] for every $s\in \sR$ which is different from  $r$,  the factors $[x_i,x_j]$ and $x_i^2$ (respectively, $[x_i,x_j]$ and $x_j^2$) do not occur in the canonical decomposition modulo $S_{(4)}$ of $s$. 
\end{enumerate}
Then $G=S/\langle \sR \rangle$ is not realizable as $G_F(2)$ for any field $F$.
\end{thm}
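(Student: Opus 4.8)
The plan is to assume $G\simeq G_F(2)$ and derive a contradiction with Theorem~\ref{thm:vanishing} by exhibiting a triple Massey product over $F$ that does not contain $0$. I treat the case $c_{iji}\neq 0$ in detail; the case $c_{ijj}\neq 0$ is entirely parallel, with $\langle\chi_i,\chi_j,\chi_j\rangle$ replacing $\langle\chi_i,\chi_i,\chi_j\rangle$ throughout. First I dispose of characteristic $2$: if $\Char F=2$ then $H^2(G_F(2),\F_2)=0$ and $G_F(2)$ is free pro-$2$, whereas $c_{iji}\neq 0$ forces $r$ to have nonzero image in $S_{(3)}/S_{(4)}$, so $r$ is a nontrivial minimal relation and $G=S/\langle\sR\rangle$ is not free pro-$2$; hence $G\not\simeq G_F(2)$. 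So I may assume $\Char F\neq 2$. Since $\sR\subseteq S_{(2)}=\Phi(S)$, the images of the generators $x_m$ form a minimal basis of $G/\Phi(G)$; let $\chi_m\in H^1(G,\F_2)$ be the dual basis, so that by Kummer theory $\chi_m=\chi_{a_m}$ for suitable $a_m\in F^*$. Transgression identifies $H^2(G,\F_2)$ with the dual of the minimal relation space, and in the basis of $S_{(2)}/S_{(4)}$ underlying the canonical decomposition (\ref{modulo S4}) this duality reads off the coefficients of a relation $s$: pairing $\chi_p\cup\chi_q$ with $s$ returns $b_{pq}(s)$ for $p\neq q$, and pairing the Bockstein $\beta(\chi_p)=\chi_p\cup\chi_p$ with $s$ returns $a_p(s)$. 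Its extension to the cubic level, detecting the $c_{pqk}$ by triple Massey products without requiring that all cup products vanish, is the machinery developed earlier in the paper.

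Next I translate the hypotheses so that the product is defined. The entries of $\langle\chi_i,\chi_i,\chi_j\rangle$ require $\chi_i\cup\chi_i=0$ and $\chi_i\cup\chi_j=0$ in $H^2(G,\F_2)$: by (i) we have $a_i=u_{ij}=0$, and by (ii) neither $x_i^2$ nor $[x_i,x_j]$ occurs in any $s\neq r$, so both classes pair to $0$ with every relation and hence vanish. Thus $\langle\chi_i,\chi_i,\chi_j\rangle$ is defined. Its indeterminacy is $\chi_i\cup H^1(G,\F_2)+\chi_j\cup H^1(G,\F_2)$; pairing the generators $\chi_i\cup\chi_l$, $\chi_j\cup\chi_l$, $\beta(\chi_i)$, $\beta(\chi_j)$ against $r$ returns, respectively, $u_{il}$, $u_{jl}$, $a_i$, $a_j$, all of which vanish by (i). So the entire indeterminacy pairs to $0$ against $r$.

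The decisive step computes the transgression pairing of a value of $\langle\chi_i,\chi_i,\chi_j\rangle$ with $r$. Expanding $[[x_i,x_j],x_i]$ in the mod-$2$ Magnus algebra gives leading term $X_iX_iX_j+X_jX_iX_i$, so the word $X_iX_iX_j$ carries precisely the coefficient $c_{iji}$; the extended transgression method then identifies the pairing of the Massey product with $r$ as $c_{iji}$, where the vanishing of $\chi_i\cup\chi_i$ and $\chi_i\cup\chi_j$ on $r$ (from (i)) is exactly what removes the lower-order corrections and makes this cubic coefficient a defining-system-independent invariant. Since every element of $\langle\chi_i,\chi_i,\chi_j\rangle$ differs from a fixed value by the indeterminacy, and the indeterminacy pairs to $0$ against $r$, the whole coset pairs to the single nonzero number $c_{iji}$; as $0$ would pair to $0$, we conclude $0\notin\langle\chi_i,\chi_i,\chi_j\rangle$. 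This contradicts Theorem~\ref{thm:vanishing}, completing the argument.

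The main obstacle I anticipate is precisely this decisive step: making the extended transgression dictionary sharp enough that the pairing of $\langle\chi_i,\chi_i,\chi_j\rangle$ with $r$ equals $c_{iji}$ exactly, and not merely up to cup-product and Bockstein corrections coming from the quadratic part of $r$ or from the other relations. Conditions (i) and (ii) are engineered to annihilate those corrections—(i) kills the quadratic coefficients of $r$ that would contaminate the cubic term, and (ii) prevents any other relation from contributing to the same cup products or Bocksteins—so the real work lies in verifying that, once these are imposed, the cubic detection is clean and independent of the chosen defining system.
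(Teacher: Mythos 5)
Your overall strategy is the right one and matches the paper's in outline: assume $G\simeq G_F(2)$, show that under (i) and (ii) the triple Massey product $\langle \chi_i,\chi_i,\chi_j\rangle$ is defined, that the trace $\tr_r$ kills its indeterminacy while sending some value to $c_{iji}\neq 0$, and contradict Theorem~\ref{thm:vanishing}. Your verification that the product is defined and that the indeterminacy pairs to $0$ against $r$ is correct and is exactly how the paper argues (via \cite[Proposition 1.3.2]{Vo} / \cite[Proposition 3.9.13]{NSW}). Your route is even slightly more direct than the paper's, which first reduces to the one-relator group $G'=S/\langle r\rangle$ and then uses naturality of Massey products under $S/\langle r\rangle\to S/\langle \sR\rangle$; working with $\tr_r$ on $G$ itself is legitimate since $\tr_r$ is defined for any $r$ in the relation module of a minimal presentation.

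However, there is a genuine gap at precisely the point you flag as ``the decisive step'': you assert, but do not prove, that some value of $\langle \chi_i,\chi_i,\chi_j\rangle$ pairs with $r$ to give exactly $c_{iji}$. Appealing to ``the extended transgression method'' and a Magnus-algebra leading-term heuristic is circular here, because that dictionary between third Zassenhaus coefficients and triple Massey product values is the very content of the theorem's proof; it is not among the quoted results (the cited propositions of Vogel/NSW only give the quadratic dictionary, i.e.\ the $a_i$ and $b_{ij}$ via cup products and Bocksteins). The paper closes this gap concretely: it writes down the explicit matrices $u=I+E_{23}+E_{34}$ and $v=I+E_{12}$ in $\U_4(\F_2)$, defines $\rho\colon S\to \U_4(\F_2)$ by $x_i\mapsto u$, $x_j\mapsto v$, $x_l\mapsto 1$ otherwise, computes $[[u,v],u]=I+E_{14}$, $[[u,v],v]=1$, checks that $\rho(s)$ is central for every $s\in\sR$ (this is where (ii) is used: $u^2=I+E_{24}$ and $[u,v]$ are \emph{not} central, so $x_i^2$ and $[x_i,x_j]$ must not occur in any other relation for $\bar\rho$ to descend to $G$), and then invokes Lemma~\ref{lem:Sharifi} to identify $\trg(\Lambda(\rho))$ with the Massey product value relative to the defining system of $\bar\rho$, giving $\tr_r(\alpha)=-\rho_{14}(r)=c_{iji}$. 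Without this explicit representation-theoretic computation (Lemma~\ref{lem:coef2} and Proposition~\ref{prop:coef2}), your argument does not yet constitute a proof; everything else in your write-up is correct scaffolding around the missing core.
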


Theorem~\ref{thm:main ob1} (respectively Theorem~\ref{thm:main ob2}) follows immediately from Theorem~\ref{thm:vanishing} and Theorem~\ref{thm:ob1} (respectively Theorem~\ref{thm:ob2}).  
\begin{rmks}
1) Notice that any pro-$2$-group which is realizable as $G_F$  for some field $F$, is also realizable as $G_F(2)$. Hence the above two theorems also provide pro-2-groups which cannot be realizable as  the absolute Galois group of any field $F$.

2) One can also use Theorems~\ref{thm:main ob1} and \ref{thm:main ob2} to obtain {\it profinite} groups which are not realizable as the absolute Galois group of any field $F$. For simplicity we consider only the following example. Let $S$ be a free profinite group on $5$ generators $x_1, \ldots,x_5$ and let $r=[x_4,x_5][[x_2,x_3],x_1]$. Then $G:=S/\langle r \rangle$ cannot be realizable as $G_F$ for any field  $F$. In fact, one can check that the pro-$2$-quotient $G(2)$ of $G$ has a presentation $G(2)=S^\prime/\langle r' \rangle$, where $S'$ is a free pro-2-group on $5$ generators $y_1,\ldots,y_5$ and $r^\prime=[y_4,y_5][[y_2,y_3],y_1]$. By Theorem~\ref{thm:main ob1}, we see that $G(2)$ cannot be of the form $G_F(2)$ for any field $F$. Therefore $G$ is not realizable as $G_F$.
\end{rmks}

Motivated by the theorems above, we formulate the Vanishing $n$-Massey Conjecture for $n\geq 3$. See Definition~\ref{defn:vanishing n-Massey} for the definition of the vanishing $n$-fold Massey product property.
\begin{conj}
\label{conj:vanishing n-Massey}
 Let $p$ be a prime number and $n\geq 3$ an integer. Let $F$ be a field, which contains a primitive $p$-th root of unity if ${\rm char}(F)\not=p$. Then the absolute Galois group $G_F$ of $F$ has the vanishing $n$-fold Massey product property with respect to $\F_p$.
\end{conj}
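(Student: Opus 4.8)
The plan is to reduce the conjecture to a lifting problem for Galois representations into unipotent groups and then to attack it by induction on $n$, using Theorem~\ref{thm:vanishing} as the base case. First, since Massey products with $\F$-coefficients are assembled out of $H^1$ and $H^2$ alone, and these agree for $G_F$ and for its maximal pro-$p$ quotient $G_F(p)$, one reduces at once to the study of $G_F(p)$. A standard inverse-limit argument then lets one work with finitely generated quotients, and the case of cohomological dimension $\le 1$ is trivial because $H^2$ vanishes and every Massey product automatically contains $0$; by \cite{LvdD} this already disposes of the projective (in particular PAC) examples. Thus the essential content concerns groups carrying genuine relations.

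Next I would invoke the Dwyer-type dictionary (cf.\ \cite{Dwy75}) that translates vanishing of $\langle \chi_1,\ldots,\chi_n\rangle$ into the solvability of an embedding problem. A defining system for the $n$-fold product is precisely a homomorphism $\bar\rho\colon G_F\to \bar{U}_{n+1}(\F_p)$ into the upper unitriangular group modulo its center, with the characters $\chi_i$ prescribed along the superdiagonal, and the product contains $0$ exactly when $\bar\rho$ lifts to $U_{n+1}(\F_p)$. Since the kernel of $U_{n+1}\to\bar{U}_{n+1}$ is the central $\F_p$, the obstruction to lifting is a class in $H^2(G_F,\F_p)$. Here the Rost--Voevodsky theorem \cite{Voe} enters decisively: it forces $H^*(G_F,\F_p)$ to be generated in degree one with relations generated in degree two, so that every obstruction class is a sum of cup products $\chi_a\cup\chi_b$ of Kummer symbols. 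The goal is then to show that, after adjusting $\bar\rho$ within the indeterminacy of the defining system, this obstruction can be made to vanish.

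For the inductive step I would try to peel off the outermost character $\chi_n$, realizing the $(n-1)$-fold sub-products as explicit coboundaries via the inductive hypothesis and reassembling them into a lift. The transgression and explicit cochain constructions developed in this paper, extending Serre's method, are the natural instrument for \emph{producing} the required cochains rather than merely certifying a cohomological vanishing; this is the concrete analogue of the $n=3$, $p=2$ argument. The point is to replace the input that Hilbert reciprocity provided for global fields in \cite{HW} by a mechanism valid for all fields, just as Theorem~\ref{thm:vanishing} does in the first nontrivial case.

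The hard part will be exactly this last point. Quadraticity of the cohomology ring, which is all that \cite{Voe} directly supplies, is strictly weaker than the formality of the cochain algebra that would make \emph{all} higher Massey products vanish; in a general quadratic algebra higher Massey products are genuinely nonzero, so no purely formal argument can succeed, and one must exploit a special feature of absolute Galois groups. Moreover, for $n\ge 4$ the indeterminacy of the product is larger and couples several defining systems at once, so the reduction to a single degree-two cup-product obstruction is far more delicate than in the triple case. Finally, for odd $p$ even the base case is missing, since the analogue of Theorem~\ref{thm:vanishing} for $p>2$ remains open (see \cite{GMT}). Isolating the field-theoretic input that kills the degree-two obstruction uniformly in $n$ and $p$ is where I expect the real difficulty to lie.
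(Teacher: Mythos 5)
There is a genuine gap here, and it is worth being precise about its nature: the statement you are trying to prove is stated in the paper as a \emph{conjecture}, not a theorem, and the paper itself offers no proof of it in general. What the paper actually establishes are special cases: the case $n=3$, $p=2$ for all fields (Theorem~\ref{thm:vanishing}/Theorem~\ref{thm:Massey vanishing}), the case of projective groups and hence PAC fields (Example~\ref{ex:projective}), Demushkin groups for all $n$ and $p$ (Theorem~\ref{thm:Demushkin}), free pro-$p$ products (Proposition~\ref{prop:free product}), and the repeated-character products $\langle\chi,\ldots,\chi\rangle$ for $n\le p$ (Proposition~\ref{prop:p-Massey}). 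Your preliminary reductions are all correct and in fact coincide with what the paper proves in Sections 2--4: the passage from $G_F$ to $G_F(p)$ is Remark~\ref{rmk:G and G(p)} and Corollary~\ref{cor:G and G(p)}, the translation into a lifting problem for $\bar\U_{n+1}(\F_p)$-valued representations is Theorem~\ref{thm:Dwyer} and Corollary~\ref{cor:vanishing-lifting}, and the identification of the obstruction with a class in $H^2$ expressible via cup products is where Bloch--Kato enters. But after these reductions your argument stops at exactly the point where a proof would have to begin.

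Concretely, the missing step is the one you yourself flag: you give no mechanism for showing that, among all defining systems with the prescribed superdiagonal, at least one has vanishing obstruction class. The inductive scheme of ``peeling off $\chi_n$'' is not carried out, and there is no reason to expect it to go through formally: as you correctly observe, quadraticity of $H^*(G_F,\F_p)$ does not imply formality of the cochain DGA, and for $n\ge 4$ the defining systems are coupled in a way that prevents reducing to a single $H^2$ obstruction. Even the base case of your induction is unavailable for odd $p$ at the level of generality claimed. For the one case the paper does settle in full generality ($n=3$, $p=2$), the actual mechanism is not an induction at all but either an explicit $F$-rational point on the splitting variety $X_{a,b,c}$ of \cite{HW} (Section 5) or an explicit construction of $\U_4(\F_2)$-Galois extensions via the norm conditions of \cite{GLMS} (Section 6); neither of these field-theoretic inputs appears in your proposal, and finding their analogue ``uniformly in $n$ and $p$'' is precisely the open problem. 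So what you have written is a correct framing of the conjecture together with an accurate list of the obstacles, but not a proof.
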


A family of fields which satisfy the Vanishing $n$-Massey Conjecture for any $n\geq 3$ (and any $p$) are PAC fields. (Recall that a field $F$ is called PAC if each non-empty variety defined over $F$ has an $F$-rational point (see \cite[Chapter 11, page 192]{FJ}).)
This follows from the result mentioned earlier in the Introduction, that the absolute Galois groups of PAC fields are projective and from Example~\ref{ex:projective}. 

In this paper, Theorem~\ref{thm:vanishing}, more precisely Theorem~\ref{thm:Massey vanishing}, shows that Conjecture~\ref{conj:vanishing n-Massey} holds true for $n=3$, $p=2$ and for any field $F$. In \cite{MT2}, we show that the conjecture is true for any $n\geq 3$, $p>2$ and for any $p$-rigid field $F$. In \cite{MT3}, the conjecture is verified for $n=3$, $p>2$ and $F$ an algebraic number field. Note also that Theorem~\ref{thm:Demushkin} shows that the conjecture is true for any $n\geq 3$, any prime number $p$ and any local field $F$. 
Further results related to Conjecture~\ref{conj:vanishing n-Massey} are Proposition~\ref{prop:free product} and Proposition~\ref{prop:p-Massey} as well as additional results in \cite{MT2}.
In Section~\ref{sec:further}, we also formulate a related conjecture, the Kernel $n$-Unipotent Conjecture (see Conjecture~\ref{conj:kernel intersection}). 

As will be explained in Section~\ref{sec:further}, the Kernel $n$-Unipotent Conjecture evolved over a number of years through work contained in \cite{Vi},\cite{MS2},\cite{GLMS},\cite{EM1},\cite{EM2} and \cite{Ef}. This conjecture has  significant value because it describes specific pro-$p$-groups which are images of unipotent representations of absolute Galois groups as building blocks of quotients of absolute Galois groups by various terms in their $p$-Zassenhaus filtrations.

The Vanishing $n$-Massey Conjecture can be used to construct these building blocks from much smaller $p$-groups inductively. (See Theorem~\ref{thm:Dwyer}, due to B. Dwyer, and our use of it in Section 6.) Thus these two conjectures together provide us with valuable  tools for telling us which Galois $p$-groups we should be able to construct automatically from smaller Galois groups, and how we can proceed to build entire maximal $p$-extensions of any field. 
Our paper contributes to the developments of new directions in studies of Galois $p$-extensions of fields. It complements methods in current research in abelian birational geometry (\cite{BT}, \cite{BT2} and \cite{Pop}).

In retrospect we now understand the initial Artin-Schreier results from this new point of view, and we now better appreciate A. Weil's intuition about the significance of these results for future developments in Galois theory. (See Remark~\ref{rmk:Artin-Schreier}.)

It seems that our use of triple Massey products for detecting higher commutators is  the first time when the rather restrictive assumption that all cup products have to vanish, was removed. (See e.g., \cite{Ef,Ga,Mor,Vo,Vo2}.) In fact this suggests that there is a comprehensive extension of the theory described in \cite[Appendix]{Vo2} where the assumption on the relations of $G$ contained in a large enough weight of the free group mapping on $G$ can be considerably weakened if $G=G_F(p)$ for some prime $p$. (Here $G_F(p)$ is the maximal pro-$p$ quotient of the absolute Galois group $G_F$.) Work on this theory is in  progress. (See \cite{GMT}.)

In the following discussion, we refer to definitions for the formality of differential graded algebras and the motivation for studying formality to \cite{DGMS} as well as connections with Massey products. (To recall the notion of differential graded algebras abbreviated as DGAs, see  Section 2.) Let $\sC^\bullet:=\sC^\bullet(\Spec\, F,\Z/2) = \sC^\bullet(G_F,\Z/2)$ be the DGA of inhomogeneous continuous cochains of $G_F$ with coefficients in  $\Z/2$.
In the  paper \cite{HW}, the following extremely interesting question was posed.
\begin{question}[{\cite[Question 1.3]{HW}}] 
Is $\sC^\bullet({\rm Spec }\, F, \Z/2)$ formal?
\end{question}
It is known that if $\sC^\bullet(\Spec\, F,\Z/2)$ is formal, then all higher Massey products vanish. Therefore the vanishing property of Massey products makes the question above a natural one.

The structure of our paper is as follows. In Sections 2 and 3, basic facts on Massey products are reviewed. Some examples on groups satisfying the vanishing Massey product property are discussed in Section 4. In Section 5 we provide the first proof of Theorem~\ref{thm:vanishing} using splitting varieties \cite{HW}. In Section 6 we present the second proof of Theorem~\ref{thm:vanishing} using Galois theory and some results in \cite{GLMS}. 
In Section 7 we apply our results to show some strong restrictions on the shape of relations of $G_F(2)$, for a field $F$.
In the last section we point out certain notions related to our results and possibly interesting directions for further research.
\\
\\
{\bf Acknowledgements: } We are very grateful to Ido Efrat for his interest, encouragement, detailed notes, and suggested various improvements which we have used. We thank Michael Hopkins and Kirsten Wickelgren for  interesting discussions and correspondence, as well as their encouraging welcome of our results.   We thank  Murray Schacher and Suresh 
Venapally for their correspondence, and for information that they also obtained some of our results in Section 5.
We also thank Julien Blondeau, Sunil Chebolu, Brian Conrad, Jochen G\"{a}rtner, Detlev Hoffmann, John Labute, Jeffrey Lagarias, Christian Maire, Claudio Quadrelli, Andrew Schultz, Romyar Sharifi, \NQT, Adam Topaz, Zack Wolske and Lei Zhang  for their interest, encouragement and interesting discussions.

We are also grateful to the referees for their comments and valuable suggestions which we used to improve our exposition.

\section{Review of Massey products}
In this section and the next section, we review some basic facts about Massey products and we  use as main resources \cite{Dwy75}, \cite{Ef}, \cite{HW} and \cite{Wic1}. For other references on Massey products, see e.g., \cite{Fe, Kra,May, Mor, Vo}.

Let $A$ be a unital commutative ring. Recall that a differential graded algebra (DGA) over $A$ is a graded $A$-algebra 
\[\sC^\bullet =\oplus _{k\geq 0} \sC^k=\sC^0\oplus\sC^1\oplus \sC^2\oplus \cdots \] with product $\cup$ and equipped with a differential $\partial\colon \sC^\bullet\to \sC^{\bullet +1}$ such that 
\begin{enumerate}
\item $\partial$ is a derivation, i.e.,
\[
\partial(a\cup b) = \partial a\cup b +(-1)^k a\cup \partial b \quad (a\in \sC^k);
\]
\item $\partial^2=0$.
\end{enumerate}
Then as usual the cohomology is $H^\bullet:=\ker\partial/\im\partial$. We shall assume that $a_1,\ldots,a_n$ are elements in $H^1$.

\begin{defn}
 A collection $M=(a_{ij})$, $1\leq i<j\leq n+1$, $(i,j)\not=(1,n+1)$, of elements of $\sC^1$ is called a {\it defining system} for the {\it $n$th order Massey product} $\langle a_1,\ldots,a_n\rangle$ if the following conditions are fulfilled:
\begin{enumerate}
\item $a_{i,i+1}$ represents $a_i$.
\item $\partial a_{ij}= \sum_{l=i+1}^{j-1} a_{il}\cup a_{lj}$ for $i+1<j$.
\end{enumerate}
Then $\sum_{k=2}^{n} a_{1k}\cup a_{k,n+1}$ is a $2$-cocycle.
Its  cohomology class in $H^2$  is called the {\it value} of the product relative to the defining system $M$,
and is denoted by $\langle a_1,\ldots,a_n\rangle_M$.

The product $\langle a_1,\ldots, a_n\rangle$ itself is the subset of $H^2$ consisting of all  elements which can be written in the form $\langle a_1,\ldots, a_n\rangle_M$ for some defining system $M$. The product $\langle a_1,\ldots, a_n\rangle$ is {\it uniquely defined} if it contains only one element. 

When $n=3$ we will speak about a {\it triple} Massey product.

For $n\geq2$ we say that $\sC^\bullet$ has the {\it vanishing $n$-fold Massey product property}
if every defined Massey product $\langle a_1,\ldots, a_n\rangle$, where $ a_1,\ldots, a_n\in \sC^1$,
necessarily contains $0$.
\end{defn}

\begin{rmk}
\label{rmk:indet}
Let  $a_1,\ldots,a_n\in H^1$.
Suppose that $M=(a_{ij})$, $1\leq i<j\leq n+1$ and $(i,j)\not=(1,n+1)$,
is a defining system for the $n$-fold Massey product of $a_1,\ldots,a_n$.
It is straightforward to see that 
\[
\langle a_1,\ldots,a_n\rangle_M+ a_1\cup H^1+ H^1\cup a_n \subset \langle a_1,\ldots,a_n\rangle.
\]
And if $n=3$ then 
\[
\langle a_1,a_2,a_3\rangle_M+ a_1\cup H^1+ H^1\cup a_3 = \langle a_1,a_2,a_3\rangle. 
\]
In particular, $\langle a_1,a_2,a_3\rangle$ is uniquely defined if and only if $a_1\cup H^1= H^1\cup a_3=0$.
\end{rmk}

\section{Massey products and unipotent representations}
\label{sec:3}
Let  $G$ be a profinite group and let $A$ be a finite commutative ring considered as a trivial discrete $G$-module.
The  complex $\sC^\bullet=(\sC^\bullet(G,A),\partial)$ of inhomogeneous continuous cochains of $G$
with coefficients in $A$, is a DGA with the cup product  \cite[Ch.\ I, \S2 and Proposition 1.4.1]{NSW}. (Technically \cite[Proposition 1.4.1]{NSW} deals with homogeneous continuous cochains. However it is straightforward to see, using this proposition and the relationship between homogeneous and inhomogeneous
continuous cochains in  \cite[Ch. I, \S2]{NSW}, that this proposition is true also for inhomogeneous continuous cochains.)
We write $H^i(G,A)$ for the corresponding cohomology groups.
As observed by Dwyer \cite{Dwy75} in the discrete context (see also \cite[\S8]{Ef} in the profinite case),
defining systems for this DGA can be interpreted in terms of upper-triangular unipotent representations of $G$, as follows.

Let $n\geq 3$ be an integer.  Let $\U_{n+1}(A)$ be the group of all upper-triangular unipotent $(n+1)\times(n+1)$-matrices
with entries in  $A$.
Let $Z_{n+1}(A)$ be the subgroup of all such matrices with all off-diagonal entries
being $0$ except at position $(1,n+1)$. This group is the center of $\U_{n+1}(A)$.
We may identify $\U_{n+1}(A)/Z_{n+1}(A)$ with the group $\bar\U_{n+1}(A)$
of all upper-triangular unipotent $(n+1)\times(n+1)$-matrices with entries over $A$
with the $(1,n+1)$-entry omitted.

For a representation $\rho\colon G\to \U_{n+1}(A)$ and $1\leq i< j\leq n+1$
let $\rho_{ij}\colon G\to A$
be the composition of $\rho$ with the projection from $\U_{n+1}(A)$ to its $(i,j)$-coordinate.
We use  similar notation for representations $\bar\rho\colon G\to\bar\U_{n+1}(A)$.
Note that $\rho_{i,i+1}$ (resp., $\bar\rho_{i,i+1}$) is a group homomorphism.

\begin{thm}[{\cite[Theorem 2.4]{Dwy75}}]
\label{thm:Dwyer}
Let $\alpha_1,\ldots,\alpha_n$ be elements of $H^1(G,A)$. There is a one-one correspondence $M\leftrightarrow \bar\rho_M$ between defining systems $M$ for $\langle \alpha_1,\ldots,\alpha_n\rangle$ and group homomorphisms $\bar\rho_M:G\to \bar{\U}_{n+1}(A)$ with $(\bar\rho_M)_{i,i+1}= -\alpha_i$, for $1\leq i\leq n$.

Moreover $\langle \alpha_1,\ldots,\alpha_n\rangle_M=0$ in $H^2(G,A)$ if and only if the dotted arrow exists in the following  commutative diagram
\[
\xymatrix{
& & &G \ar@{->}[d]^{\bar\rho_M} \ar@{-->}[ld]\\
0\ar[r]& A\ar[r] &\U_{n+1}(A)\ar[r] &\bar{\U}_{n+1}(A)\ar[r] &1.
}
\]
\end{thm}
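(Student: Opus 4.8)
The plan is to extract everything from the multiplication rule in $\U_{n+1}(A)$ and match it term by term against both the defining-system equations and the cup-product formula for inhomogeneous cochains. First I would record the homomorphism condition entrywise: for a map $\rho\colon G\to\U_{n+1}(A)$, writing $\rho_{ij}$ for its $(i,j)$-coordinate and using $\rho_{ii}\equiv 1$ together with $\rho_{kj}\equiv 0$ for $k>j$, the equation $\rho(gh)=\rho(g)\rho(h)$ becomes, for each pair $i<j$,
\[
\rho_{ij}(gh)=\rho_{ij}(g)+\rho_{ij}(h)+\sum_{i<k<j}\rho_{ik}(g)\rho_{kj}(h).
\]
Since $(f\cup f')(g,h)=f(g)f'(h)$ and $(\partial f)(g,h)=f(g)+f(h)-f(gh)$ for the trivial action, this is precisely the cochain identity $\partial\rho_{ij}=-\sum_{i<k<j}\rho_{ik}\cup\rho_{kj}$.

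Next I would set up the correspondence. Given a collection $(\rho_{ij})_{i<j}$ of $1$-cochains, put $a_{ij}:=-\rho_{ij}$, so that the sign in the identity above is absorbed and the homomorphism conditions for all $(i,j)\neq(1,n+1)$ become exactly the defining-system equations $\partial a_{ij}=\sum_{l=i+1}^{j-1}a_{il}\cup a_{lj}$. In particular the case $j=i+1$ forces $\partial a_{i,i+1}=0$, so every superdiagonal entry is a homomorphism, and $a_{i,i+1}$ represents $\alpha_i$ iff $\rho_{i,i+1}=-\alpha_i$. Since $\bar\U_{n+1}(A)$ is obtained from $\U_{n+1}(A)$ by omitting the $(1,n+1)$-entry, a homomorphism $\bar\rho_M\colon G\to\bar\U_{n+1}(A)$ with $(\bar\rho_M)_{i,i+1}=-\alpha_i$ records precisely the entries $a_{ij}$ with $(i,j)\neq(1,n+1)$; the assignment $M=(a_{ij})\mapsto\bar\rho_M$, $(\bar\rho_M)_{ij}=-a_{ij}$, is then visibly a bijection whose inverse reads off the coordinate functions.

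For the vanishing criterion I would exploit the central extension $0\to A\cong Z_{n+1}(A)\to\U_{n+1}(A)\to\bar\U_{n+1}(A)\to1$. Choose the set-theoretic lift $\rho$ of $\bar\rho_M$ with $\rho_{ij}=-a_{ij}$ for $(i,j)\neq(1,n+1)$ and $\rho_{1,n+1}\equiv0$. Then $c(g,h):=\rho(g)\rho(h)\rho(gh)^{-1}$ lands in $Z_{n+1}(A)\cong A$ and is a $2$-cocycle whose class in $H^2(G,A)$ is the standard obstruction to lifting $\bar\rho_M$ to a homomorphism into $\U_{n+1}(A)$, so the dotted arrow exists iff $[c]=0$. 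As $\bar\rho_M$ is already a homomorphism, $\rho(g)\rho(h)$ and $\rho(gh)$ agree off the $(1,n+1)$-slot, and the matrix product gives $c(g,h)=\sum_{k=2}^{n}\rho_{1k}(g)\rho_{k,n+1}(h)=\sum_{k=2}^{n}(a_{1k}\cup a_{k,n+1})(g,h)$, whence $[c]=\langle\alpha_1,\ldots,\alpha_n\rangle_M$. Therefore the lift exists iff $\langle\alpha_1,\ldots,\alpha_n\rangle_M=0$ in $H^2(G,A)$.

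I expect the main obstacle to be careful bookkeeping rather than a deep idea: reconciling the sign conventions of $\partial$, $\cup$ and matrix multiplication so that the superdiagonal sign $-\alpha_i$ falls out uniformly, and invoking cleanly the standard fact that lifting a homomorphism along a central extension is governed by a well-defined class in $H^2(G,A)$ represented by $c$. Checking that the two assignments are mutually inverse, and that $c$ is genuinely a cocycle, are the routine verifications that remain.
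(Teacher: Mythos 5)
Your proof is correct, and it is the standard argument for this result of Dwyer: the paper itself does not prove Theorem~\ref{thm:Dwyer} but cites \cite[Theorem 2.4]{Dwy75} (and \cite[\S8]{Ef} for the profinite case), while the explicit correspondence it records immediately afterwards, $(\bar\rho_M)_{ij}=-a_{ij}$, is exactly the one you construct. Your entrywise matching of the unipotent multiplication law with the defining-system equations, and the identification of $\sum_{k=2}^{n}a_{1k}\cup a_{k,n+1}$ with the obstruction cocycle of the central extension $0\to A\to \U_{n+1}(A)\to\bar\U_{n+1}(A)\to 1$, reproduce the cited proof; the only point worth making explicit in the profinite setting is that all cochains involved are continuous, so the resulting maps into the finite groups $\U_{n+1}(A)$ and $\bar\U_{n+1}(A)$ are continuous as well.
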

Explicitly, the one-one correspondence in Theorem~\ref{thm:Dwyer} is given by: For a defining system $M=(a_{ij})$ for $\langle \alpha_1,\ldots,\alpha_n\rangle$, $\bar\rho_M\colon G\to \bar{\U}_{n+1}(A)$ is given by letting $(\bar\rho_M)_{ij}=-a_{ij}$. 
\begin{rmk}
\label{rmk:G and G(p)}
Let $G(p)$ be the maximal pro-$p$-quotient of $G$. Then the natural map
 \[\pi^*\colon H^1(G(p),\F_p)\to H^1(G,\F_p),\] 
 which is induced from the  quotient map $\pi\colon G\to G(p)$,  is an isomorphism. 
Let  $\alpha_1,\ldots,\alpha_n$ be elements of $H^1(G(p),\F_p)$. Then the following are equivalent:
\begin{enumerate}
\item The $n$-fold Massey product $\langle \alpha_1,\ldots,\alpha_n\rangle$ is defined and contains 0 in $H^2(G(p),\F_p)$
\item The $n$-fold Massey product $\langle \pi^*(\alpha_1),\ldots,\pi^*(\alpha_n)\rangle$ is defined and contains $0$ in $H^2(G,\F_p)$.
\end{enumerate}
This follows from Theorem~\ref{thm:Dwyer} and from the fact that $\U_{n+1}(\F_p)$ and $\bar{\U}_{n+1}(\F_p)$ are (finite) $p$-groups.
\end{rmk}
\begin{defn}
\label{defn:vanishing n-Massey}
Let the notation be as above. We say that $G$ has the {\it vanishing $n$-fold Massey product property (with respect to $A$)} if the DGA $\sC^\bullet(G,A)$ has the vanishing $n$-fold Massey product property.
\end{defn}

\begin{cor}
\label{cor:vanishing-lifting} Let $n\geq 3$ be an integer. 
The following conditions are equivalent.
\begin{enumerate}
\item[(i)]
$G$ has the vanishing $n$-fold Massey product property with respect to $A$.
\item[(ii)]
For every representation $\bar{\rho}\colon G\to \bar\U_{n+1}(A)$, there is a representation
$\rho\colon G\to\U_{n+1}(A)$ such that $\rho_{i,i+1}=\bar{\rho}_{i,i+1}$,
for $i=1,2,\ldots, n$.
\end{enumerate}
\end{cor}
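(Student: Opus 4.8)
The plan is to deduce both implications directly from the Dwyer correspondence of Theorem~\ref{thm:Dwyer}, exploiting the fact that both conditions are statements only about the super-diagonal entries $\rho_{i,i+1}$ of representations into $\U_{n+1}(A)$ and $\bar\U_{n+1}(A)$.

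First I would prove (i) $\Rightarrow$ (ii). Given an arbitrary homomorphism $\bar\rho\colon G\to\bar\U_{n+1}(A)$, I set $\alpha_i:=-\bar\rho_{i,i+1}$ for $1\le i\le n$; each $\bar\rho_{i,i+1}$ is a group homomorphism, hence a class in $H^1(G,A)$. By Theorem~\ref{thm:Dwyer} the map $\bar\rho$ is exactly $\bar\rho_M$ for some defining system $M$ of $\langle\alpha_1,\ldots,\alpha_n\rangle$; in particular this Massey product is defined. Hypothesis (i) then guarantees that $0\in\langle\alpha_1,\ldots,\alpha_n\rangle$, so there is a defining system $M'$ with $\langle\alpha_1,\ldots,\alpha_n\rangle_{M'}=0$. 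Applying the ``moreover'' part of Theorem~\ref{thm:Dwyer} to $M'$, the associated $\bar\rho_{M'}$ lifts to some $\rho\colon G\to\U_{n+1}(A)$. Since the projection $\U_{n+1}(A)\to\bar\U_{n+1}(A)$ only forgets the $(1,n+1)$-entry, it preserves every super-diagonal entry, so $\rho_{i,i+1}=(\bar\rho_{M'})_{i,i+1}=-\alpha_i=\bar\rho_{i,i+1}$ for all $i$. This $\rho$ is the required lift, establishing (ii).

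Conversely, for (ii) $\Rightarrow$ (i), let $\langle\alpha_1,\ldots,\alpha_n\rangle$ be any defined Massey product, with a defining system $M$. By Theorem~\ref{thm:Dwyer}, $M$ yields $\bar\rho_M\colon G\to\bar\U_{n+1}(A)$ with $(\bar\rho_M)_{i,i+1}=-\alpha_i$. Applying hypothesis (ii) to $\bar\rho_M$ produces $\rho\colon G\to\U_{n+1}(A)$ with $\rho_{i,i+1}=(\bar\rho_M)_{i,i+1}=-\alpha_i$. I would then pass to the projection $\bar\rho'$ of $\rho$ into $\bar\U_{n+1}(A)$: its super-diagonal entries are again $-\alpha_i$, so by Theorem~\ref{thm:Dwyer} it corresponds to a (possibly new) defining system $M'$ for the \emph{same} tuple $\langle\alpha_1,\ldots,\alpha_n\rangle$. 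Because $\bar\rho'$ visibly lifts to $\rho$, the ``moreover'' part of Theorem~\ref{thm:Dwyer} yields $\langle\alpha_1,\ldots,\alpha_n\rangle_{M'}=0$, whence $0\in\langle\alpha_1,\ldots,\alpha_n\rangle$. This is precisely the vanishing property, i.e.\ (i).

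The one point requiring care — and the only real content beyond bookkeeping — is that in neither direction can one lift the given representation itself; rather, one replaces it by a representation with the \emph{same} super-diagonal entries that does lift. This is legitimate exactly because the value of a Massey product depends on the whole defining system, whereas both (i) and (ii) constrain only the super-diagonal data $-\alpha_i$. The matching sign convention $(\bar\rho_M)_{i,i+1}=-\alpha_i$ must be tracked consistently throughout. Beyond this I expect no obstacle, since the entire argument reduces to the equivalence already packaged in Theorem~\ref{thm:Dwyer}.
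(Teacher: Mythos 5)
Your proof is correct and is exactly the argument the paper intends: the corollary is stated as an immediate consequence of Theorem~\ref{thm:Dwyer}, and your write-up simply unwinds that correspondence in both directions, including the key observation that one replaces the given (defining system or representation) by another with the same super-diagonal data. No gaps.
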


\begin{cor} 
\label{cor:G and G(p)}
Let $n\geq 3$ be an integer. Let $G(p)$ be the maximal pro-$p$ quotient of $G$ and assume that $A=\F_p$. Then the following conditions are equivalent.
\begin{enumerate}
\item[(i)]
$G$ has the vanishing $n$-fold Massey product property with respect to $\F_p$.
\item[(ii)]
$G(p)$ has the vanishing $n$-fold Massey product property with respect to $\F_p$.
\end{enumerate}
\end{cor}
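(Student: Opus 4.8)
The plan is to reduce both conditions to the lifting criterion of Corollary~\ref{cor:vanishing-lifting} and then exploit the universal property of the maximal pro-$p$ quotient. First I would invoke Corollary~\ref{cor:vanishing-lifting} for $G$ and for $G(p)$ separately: condition (i) says precisely that every continuous representation $\bar\rho\colon G\to\bar\U_{n+1}(\F_p)$ admits a lift $\rho\colon G\to\U_{n+1}(\F_p)$ with $\rho_{i,i+1}=\bar\rho_{i,i+1}$ for $1\le i\le n$, while condition (ii) is the identical statement with $G$ replaced by $G(p)$. So the task becomes comparing these two lifting properties.

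The key observation is that $\bar\U_{n+1}(\F_p)$ and $\U_{n+1}(\F_p)$ are finite $p$-groups, hence pro-$p$. Since $G(p)$ is the maximal pro-$p$ quotient of $G$, every continuous homomorphism from $G$ into a pro-$p$ group factors uniquely through $\pi\colon G\to G(p)$. Applying this to the two groups above, I would record the compatible bijections
\[
\Hom(G(p),\bar\U_{n+1}(\F_p))\xrightarrow{\ \sim\ }\Hom(G,\bar\U_{n+1}(\F_p)),\qquad
\Hom(G(p),\U_{n+1}(\F_p))\xrightarrow{\ \sim\ }\Hom(G,\U_{n+1}(\F_p)),
\]
both given by precomposition with $\pi$. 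Because $\pi$ is fixed, these bijections commute with the projection $\U_{n+1}(\F_p)\twoheadrightarrow\bar\U_{n+1}(\F_p)$ and preserve all $(i,i+1)$-coordinates.

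Given this, the equivalence drops out. Assuming (ii) and taking an arbitrary $\bar\rho\colon G\to\bar\U_{n+1}(\F_p)$, I would write $\bar\rho=\bar\rho'\circ\pi$ for the unique $\bar\rho'\colon G(p)\to\bar\U_{n+1}(\F_p)$; condition (ii) supplies a lift $\rho'\colon G(p)\to\U_{n+1}(\F_p)$ with $\rho'_{i,i+1}=\bar\rho'_{i,i+1}$, and then $\rho:=\rho'\circ\pi$ is a lift of $\bar\rho$ matching coordinates, giving (i). For the converse, every $\bar\rho'\colon G(p)\to\bar\U_{n+1}(\F_p)$ yields $\bar\rho=\bar\rho'\circ\pi$, whose lift $\rho$ guaranteed by (i) factors uniquely as $\rho=\rho'\circ\pi$ by the second bijection, and $\rho'$ is the required lift of $\bar\rho'$. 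This establishes (i)$\Leftrightarrow$(ii).

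I do not anticipate a serious obstacle; the content is entirely the universal property of $G(p)$ combined with Corollary~\ref{cor:vanishing-lifting}. The one point deserving care is verifying that the lift $\rho$ produced by (i) genuinely factors through $G(p)$—this is exactly where the finiteness (hence pro-$p$-ness) of $\U_{n+1}(\F_p)$ enters, and it is what upgrades the one-directional lifting statement into a true equivalence. As an alternative I could argue directly from Remark~\ref{rmk:G and G(p)}, since $\pi^*\colon H^1(G(p),\F_p)\to H^1(G,\F_p)$ is an isomorphism and that remark already records the equivalence of ``defined and contains $0$'' on both sides; but routing through the representations of Corollary~\ref{cor:vanishing-lifting} makes the role of the $p$-group hypothesis most transparent.
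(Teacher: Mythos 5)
Your argument is correct and is essentially the paper's own (implicit) justification: the corollary is stated without proof right after Remark~\ref{rmk:G and G(p)}, which rests on exactly the same fact you isolate, namely that $\U_{n+1}(\F_p)$ and $\bar\U_{n+1}(\F_p)$ are finite $p$-groups, so all relevant representations factor uniquely through $\pi\colon G\to G(p)$, making the two lifting criteria of Corollary~\ref{cor:vanishing-lifting} interchangeable. Your care in checking that the lift produced by (i) itself factors through $G(p)$ is precisely the point that makes the equivalence go both ways.
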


\begin{prop}[{\cite[p. 182, Remark]{Dwy75}, see also \cite[Proposition 8.3]{Ef}}] 
\label{prop:Dwyer}
Let $\bar\rho_M:G\to \bar\U_{n+1}(A)$ correspond to a defining system $M=(c_{ij})$ for $\langle \alpha_1,\ldots,\alpha_n\rangle$ as in Theorem~\ref{thm:Dwyer}. Then the central extension associated with $\langle \alpha_1,\ldots,\alpha_n\rangle_M$ is the pull-back
\[
0\to A \to \U_{n+1}(A)\times_{\bar\U_{n+1}(A)} G \to G \to 1
\]
via $\bar\rho_M:G\to \bar\U_{n+1}(A)$ of the extension
\[0\to A \to \U_{n+1}(A)\to \bar \U_{n+1}(A) \to 1.\]
\end{prop}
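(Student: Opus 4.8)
The plan is to recognize this as an instance of the functoriality of central extensions under pull-back, and then to match the resulting cohomology class with the defining cocycle of the Massey value recorded in Section~2. Recall that a central extension $0\to A\to E\to Q\to 1$ is classified by a class in $H^2(Q,A)$, computed from any set-theoretic section $s\colon Q\to E$ via the $2$-cocycle $(q_1,q_2)\mapsto s(q_1)s(q_2)s(q_1q_2)^{-1}\in A$; moreover, pulling the extension back along a continuous homomorphism $\phi\colon G\to Q$ produces a central extension of $G$ whose class is $\phi^\ast$ of the class of the original extension. Applying this with $Q=\bar\U_{n+1}(A)$, $E=\U_{n+1}(A)$ and $\phi=\bar\rho_M$, it suffices to compute the class $e\in H^2(\bar\U_{n+1}(A),A)$ of the extension $0\to A\to\U_{n+1}(A)\to\bar\U_{n+1}(A)\to 1$ and then to check that $\bar\rho_M^\ast e=\langle\alpha_1,\dots,\alpha_n\rangle_M$.

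First I would compute $e$ using the natural set-theoretic section $\bar s\colon\bar\U_{n+1}(A)\to\U_{n+1}(A)$ that fills in the omitted $(1,n+1)$-entry with $0$. For $\bar g,\bar h\in\bar\U_{n+1}(A)$ with entries $g_{ij},h_{ij}$, a direct matrix multiplication shows that the $(1,n+1)$-entry of $\bar s(\bar g)\bar s(\bar h)$ equals $\sum_{k=2}^{n}g_{1k}h_{k,n+1}$, while $\bar s(\bar g\bar h)$ has $(1,n+1)$-entry $0$ and agrees with $\bar s(\bar g)\bar s(\bar h)$ in every other entry. Since $Z_{n+1}(A)$ is central, the cocycle $\bar s(\bar g)\bar s(\bar h)\bar s(\bar g\bar h)^{-1}$ lies in $Z_{n+1}(A)\cong A$, and reading off its $(1,n+1)$-entry gives the explicit representative
\[
e(\bar g,\bar h)=\sum_{k=2}^{n}g_{1k}\,h_{k,n+1}.
\]

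Next I would pull this back along $\bar\rho_M$. By the explicit description of the correspondence in Theorem~\ref{thm:Dwyer}, one has $(\bar\rho_M)_{ij}=-c_{ij}$, so for $x,y\in G$
\[
(\bar\rho_M^\ast e)(x,y)=\sum_{k=2}^{n}\bigl(-c_{1k}(x)\bigr)\bigl(-c_{k,n+1}(y)\bigr)=\sum_{k=2}^{n}c_{1k}(x)\,c_{k,n+1}(y).
\]
The two signs cancel, and since the cup product of inhomogeneous $1$-cochains is given by $(a\cup b)(x,y)=a(x)b(y)$, the right-hand side is precisely the value of the $2$-cocycle $\sum_{k=2}^{n}c_{1k}\cup c_{k,n+1}$ that defines $\langle\alpha_1,\dots,\alpha_n\rangle_M$ in Section~2. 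Hence $\bar\rho_M^\ast e=\langle\alpha_1,\dots,\alpha_n\rangle_M$ in $H^2(G,A)$, and the pull-back extension $\U_{n+1}(A)\times_{\bar\U_{n+1}(A)}G$ is the central extension associated with this class, as claimed.

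I do not expect any single step to be a serious obstacle; the content is bookkeeping, and the one place demanding care is the sign convention. The appearance of $-c_{ij}$ rather than $c_{ij}$ in Theorem~\ref{thm:Dwyer} is exactly what makes the two minus signs cancel and delivers $c_{1k}\cup c_{k,n+1}$ with the correct sign; had the correspondence been set up without the minus, an overall sign would have to be tracked (harmless when $A=\F_2$, as in our applications, but worth stating cleanly in general). The only other point to verify is continuity in the profinite setting: since $A$ is finite and $\bar\rho_M$ is continuous while $\bar s$ is a map of finite sets, the cocycle $\bar\rho_M^\ast e$ factors through a finite quotient of $G\times G$ and is automatically continuous, so it genuinely represents a class in the continuous cohomology group $H^2(G,A)$.
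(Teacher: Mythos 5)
Your argument is correct. Note that the paper gives no proof of Proposition~\ref{prop:Dwyer} at all --- it is quoted from Dwyer's remark and from \cite[Proposition 8.3]{Ef} --- and your computation is exactly the standard verification underlying those references: the factor set of $0\to A\to \U_{n+1}(A)\to\bar\U_{n+1}(A)\to 1$ relative to the zero-filling section is $\sum_{k=2}^{n}g_{1k}h_{k,n+1}$, and pulling back along $\bar\rho_M$ with $(\bar\rho_M)_{ij}=-c_{ij}$ produces the cocycle $\sum_{k=2}^{n}c_{1k}\cup c_{k,n+1}$ representing $\langle\alpha_1,\ldots,\alpha_n\rangle_M$, the two signs cancelling as you observe (this cancellation is also consistent with the fact that the defining-system condition $\partial c_{ij}=\sum_l c_{il}\cup c_{lj}$ is precisely what makes the matrix of entries $-c_{ij}$ a homomorphism).
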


Now assume that $G=S/R$ is the quotient of some profinite group $S$ by some normal subgroup $R$. Then we have the transgression map (\cite[Chapter I, Prop 1.6.6]{NSW})
\[
\trg: H^1(R,A)^G\to H^2(G,A).
\]
Let  $\bar\rho: G\to \bar\U_{n+1}(A)$ be a representation of $G$ and let $\langle -\bar\rho_{12},\ldots, -\bar\rho_{n,n+1}\rangle_{\bar\rho}$ be the $n$-fold Massey product value relative to the defining system corresponding to $\bar\rho$. 
Suppose that $\rho:S\to \U_{n+1}(A)$ is a {\it lift} of $\bar\rho$, i.e., $\rho$ is a homomorphism such that the diagram
\[
\xymatrix{
S \ar@{->}[d]^{\rho} \ar@{->}[r] &G \ar@{->}[d]^{\bar\rho}\ar@{->}[r] &1\\
\U_{n+1}(A)\ar[r] &\bar{\U}_{n+1}(A)\ar[r] &1
}
\]
commutes. 
We can define (see \cite[page 8]{Sha}) $\Lambda(\rho)\in H^1(R,A)^{G}$ by
\[
\Lambda(\rho)(\tau)=-\rho_{1,n+1}(\tau), 
\]
for $\tau\in R$. Then by the same argument as in \cite[Lemma 2.3]{Sha} and by Proposition~\ref{prop:Dwyer}, we obtain the following result. We include a proof here for the convenience of the reader.
\begin{lem}
\label{lem:Sharifi}
We have $\trg(\Lambda(\rho))=\langle -\bar\rho_{12},\ldots, -\bar\rho_{n,n+1}\rangle_{\bar\rho}$.
\end{lem}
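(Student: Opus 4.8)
\emph{Strategy.} The plan is to prove the identity by producing a single explicit $2$-cocycle on $G$ that represents both sides. I will use the standard cocycle description of transgression (see \cite[Ch.\ I, Prop.\ 1.6.6]{NSW}): given $\varphi\in H^1(R,A)^G=\Hom_G(R,A)$, choose any continuous $1$-cochain $\Phi\in \sC^1(S,A)$ with $\Phi|_R=\varphi$; then $\partial\Phi\in \sC^2(S,A)$ is the inflation of a unique $2$-cocycle $\psi$ on $G$, and $\trg(\varphi)=[\psi]$. Throughout, write $\pi\colon S\to G$ for the quotient map and $\mathrm{inf}\colon \sC^\bullet(G,A)\to \sC^\bullet(S,A)$ for inflation along $\pi$.

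First I would record the Massey cocycle. By the explicit form of the correspondence following Theorem~\ref{thm:Dwyer}, the defining system attached to $\bar\rho$ is $M=(a_{ij})$ with $a_{ij}=-\bar\rho_{ij}$, so the value $\langle -\bar\rho_{12},\ldots,-\bar\rho_{n,n+1}\rangle_{\bar\rho}$ is the class of the $2$-cocycle
\[
c(g_1,g_2)=\sum_{k=2}^{n}a_{1k}(g_1)\,a_{k,n+1}(g_2)=\sum_{k=2}^{n}\bar\rho_{1k}(g_1)\,\bar\rho_{k,n+1}(g_2)
\]
in $H^2(G,A)$.

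The key step is a matrix identity for the lift $\rho$. Set $F:=\rho_{1,n+1}\in \sC^1(S,A)$. Since $\rho\colon S\to \U_{n+1}(A)$ is a homomorphism, reading off the $(1,n+1)$-entry of $\rho(s_1s_2)=\rho(s_1)\rho(s_2)$ and using $\rho_{11}=\rho_{n+1,n+1}=1$ gives
\[
F(s_1s_2)=F(s_1)+F(s_2)+\sum_{k=2}^{n}\rho_{1k}(s_1)\,\rho_{k,n+1}(s_2),
\]
so $\partial F(s_1,s_2)=-\sum_{k=2}^{n}\rho_{1k}(s_1)\,\rho_{k,n+1}(s_2)$ for all $s_1,s_2\in S$. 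Because $\rho$ lifts $\bar\rho$, every off-corner coordinate factors through $\pi$, i.e.\ $\rho_{ij}=\bar\rho_{ij}\circ\pi$ for $(i,j)\neq(1,n+1)$; as the pairs $(1,k)$ and $(k,n+1)$ with $2\le k\le n$ are all off the corner, this yields $\partial F=\mathrm{inf}(-c)$. On the other hand $\rho(R)$ lies in the center $Z_{n+1}(A)=\ker(\U_{n+1}(A)\to\bar\U_{n+1}(A))$ (since $\bar\rho$ kills $R$), so $\rho_{1,n+1}|_R$ is a homomorphism and $F|_R=\rho_{1,n+1}|_R=-\Lambda(\rho)$; centrality of $Z_{n+1}(A)$ also reconfirms $\Lambda(\rho)\in H^1(R,A)^G$. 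Thus $F$ is a cochain on $S$ restricting to $-\Lambda(\rho)$ whose coboundary is $\mathrm{inf}(-c)$, and the transgression description gives $\trg(-\Lambda(\rho))=[-c]$, whence by linearity $\trg(\Lambda(\rho))=[c]=\langle -\bar\rho_{12},\ldots,-\bar\rho_{n,n+1}\rangle_{\bar\rho}$.

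\emph{Main obstacle.} The only real difficulty is bookkeeping of signs: one must fix compatible conventions for the cup product on $\sC^\bullet$, for the sign $a_{ij}=-\bar\rho_{ij}$ in Dwyer's correspondence, and for the normalization of $\trg$ in \cite{NSW}, then check they align to give $\trg(\Lambda(\rho))=[c]$ rather than $-[c]$; this is exactly where following \cite[Lemma 2.3]{Sha} verbatim pays off. A secondary point to verify with care is that $\partial F$ is genuinely inflated from $G$, which is precisely the assertion that the unique coordinate of $\rho$ not factoring through $G$ is the corner entry $(1,n+1)$ suppressed in passing to $\bar\U_{n+1}(A)$. Alternatively, one can sidestep the sign discussion conceptually via Proposition~\ref{prop:Dwyer}: $\trg(\Lambda(\rho))$ is the class of the extension of $G$ obtained by pushing out $1\to R\to S\to G\to1$ along $\Lambda(\rho)\colon R\to A$, and functoriality of transgression identifies this with the pullback of $0\to A\to\U_{n+1}(A)\to\bar\U_{n+1}(A)\to1$ along $\bar\rho$, which by Proposition~\ref{prop:Dwyer} is the central extension attached to the Massey product value.
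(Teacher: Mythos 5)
Your argument is correct, but it takes a different route from the one written in the paper. You prove the identity at the level of explicit cochains: taking $F=\rho_{1,n+1}\in\sC^1(S,A)$, reading off the $(1,n+1)$-entry of $\rho(s_1s_2)=\rho(s_1)\rho(s_2)$ to get $\partial F=\mathrm{inf}(-c)$ with $c=\sum_{k=2}^{n}\bar\rho_{1k}\cup\bar\rho_{k,n+1}$, observing that $F|_R=-\Lambda(\rho)$ because $\rho(R)\subset Z_{n+1}(A)$, and then invoking the cochain characterization of transgression from \cite[Ch.\ I, Prop.\ 1.6.6]{NSW}. This is essentially Sharifi's original computation in \cite[Lemma 2.3]{Sha}, and it has the virtue of being self-contained: it does not need Proposition~\ref{prop:Dwyer} at all, and it makes the sign bookkeeping completely explicit (your computation of $\partial F$ and of the Massey cocycle is correct, and the signs do come out as claimed under the NSW normalization of $\trg$). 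The paper instead argues at the level of extension classes: it identifies $\trg(\Lambda(\rho))$ with the class of the pushout of $1\to R\to S\to G\to 1$ along $\Lambda(\rho)$, identifies the Massey value with the class of the pullback of $0\to A\to\U_{n+1}(A)\to\bar\U_{n+1}(A)\to 1$ along $\bar\rho$ via Proposition~\ref{prop:Dwyer}, and then produces a map between the two central extensions from the universal properties of pushout and pullback, so that the two classes coincide. That diagrammatic argument is shorter and conceptually cleaner but outsources the cocycle-level content to Proposition~\ref{prop:Dwyer}; it is exactly the ``alternative'' you sketch in your closing paragraph. Either proof is acceptable; the only point to be pedantic about in yours is that the transgression recipe requires the extending cochain $\Phi$ to have $\partial\Phi$ inflated from $G$, which you correctly verify by noting that every coordinate of $\rho$ other than the $(1,n+1)$-corner factors through $\pi$.
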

\begin{proof}
We consider the following diagram
\[
\xymatrix{
1\ar@{->}[r]  &R \ar@{->}[r] \ar@{->}[d]^{-\Lambda(\rho)} &S \ar@{->}[r]  \ar@{->}[d] &G \ar@{->}[r]\ar@{=}[d] &1\\
0\ar@{->}[r] &A \ar@{->}[r] \ar@{=}[d] & \sE \ar@{->}[r] \ar@{-->}[d] &G \ar@{->}[r] \ar@{=}[d] &1\\
0\ar@{->}[r] &A \ar@{->}[r]  \ar@{=}[d] &\U_{n+1}(A)\times_{\bar\U_{n+1}(A)} G \ar@{->}[r]  \ar@{->}[d] &G \ar@{->}[r]  \ar@{->}[d]_{\bar\rho}&1\\
0\ar@{->}[r] &A \ar@{->}[r]   &\U_{n+1}(A) \ar@{->}[r] &\bar\U_{n+1}(A) \ar@{->}[r] &1
}
\]
and we read the diagram from the top to the bottom. 
Here the second exact sequence is the pushout of the first exact sequence via $\Lambda(\rho):R\to A$. Then its equivalence class as an element in $H^2(G,A)$ is $\trg(\Lambda(\rho))$. 

 On the other hand, by Proposition~\ref{prop:Dwyer} the equivalence class of the third central extension in $H^2(G,A)$ is $\langle -\bar\rho_{12},\ldots, -\bar\rho_{n,n+1}\rangle_{\bar\rho}$. In order to prove the lemma, we only need to prove that there exists a dashed arrow $\sE \dashrightarrow \U_{n+1}(A)\times_{\bar\U_{n+1}(A)} G$ making the above diagram commute. But this  follows from the universal properties of the pullback $\U_{n+1}(A)\times_{\bar\U_{n+1}(A)} G$ and the pushout $\sE$.
\end{proof}

Now let $A=\F_p$, with $p$ a prime number.
As shown for example in \cite{Ef,Ga,Mor,Vo}, Massey products in $\sC^\bullet(G,\F_p)$ are also intimately related to the
{\it $p$-Zassenhaus filtration} $G_{(n)}$, $n=1,2,\ldots$ of $G$.
Recall that this filtration is defined inductively by
\[
G_{(1)}=G, \quad G_{(n)}=G_{(\lceil n/p\rceil)}^p\prod_{i+j=n}[G_{(i)},G_{(j)}],
\]
where $\lceil n/p \rceil$ is the least integer which is greater than or equal to $n/p$.

\begin{lem}
\label{lem:trivial on Zassenhauss subgroup}
 Let $G$ be a profinite group. Then 
\begin{enumerate}
\item Every (continuous) homomorphism $\rho: G\to \U_{n+1}(\F_p)$ is trivial on $G_{(n+1)}$.
\item Every (continuous) homomorphism $\rho: G\to \bar\U_{n+1}(\F_p)$ is trivial on $G_{(n+1)}$.
\end{enumerate}
\end{lem}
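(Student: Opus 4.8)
The plan is to prove part (1) first and then deduce part (2) as an easy consequence. The key structural observation is that the $p$-Zassenhaus filtration $G_{(n)}$ is, by a theorem of Jennings--Lazard, exactly the filtration induced by powers of the augmentation ideal of the group algebra $\F_p[[G]]$; more usefully for our purposes, it is the \emph{fastest descending} central filtration $(N_i)$ of $G$ satisfying $N_1 = G$, $[N_i, N_j] \subseteq N_{i+j}$, and $N_i^p \subseteq N_{pi}$. Thus to show that a homomorphism $\rho\colon G \to \U_{n+1}(\F_p)$ kills $G_{(n+1)}$, it suffices to exhibit on the target group $\U_{n+1}(\F_p)$ a central filtration with these three closure properties whose $(n+1)$-st term is trivial, and to observe that $\rho$ carries $G_{(i)}$ into the $i$-th term of that filtration.

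First I would equip $\U := \U_{n+1}(\F_p)$ with its \emph{upper-triangular (depth) filtration}: let $U^{(i)}$ be the subgroup of unipotent matrices all of whose entries on the first $i-1$ superdiagonals vanish, i.e.\ those $g = I + N$ with $N$ supported on positions $(a,b)$ with $b - a \geq i$. Then $U^{(1)} = \U$ and $U^{(n+1)} = \{I\}$, since an $(n+1)\times(n+1)$ unipotent matrix has only $n$ nonzero superdiagonals. The two essential closure properties are standard matrix computations which I will only indicate: for $g = I + M \in U^{(i)}$ and $h = I + N \in U^{(j)}$, the commutator $[g,h]$ lies in $U^{(i+j)}$ because the leading term of $ghg^{-1}h^{-1} - I$ is $MN - NM$, whose entries are supported at depth $\geq i + j$; and $g^p \in U^{(pi)}$ because $(I+M)^p = I + \binom{p}{1}M + \cdots + M^p$, where every $M^k$ with $1 \le k \le p-1$ is supported at depth $\geq ki \geq pi$ once one notes that the binomial coefficients $\binom{p}{k}$ vanish in $\F_p$ for $1 \le k \le p-1$, leaving only $M^p \in U^{(pi)}$. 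Hence $(U^{(i)})_i$ is a central filtration satisfying $[U^{(i)},U^{(j)}] \subseteq U^{(i+j)}$ and $(U^{(i)})^p \subseteq U^{(pi)}$.

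Now I would argue by induction on $i$ that $\rho(G_{(i)}) \subseteq U^{(i)}$. The base case $i=1$ is trivial. For the inductive step, recall $G_{(i)} = G_{(\lceil i/p\rceil)}^p \prod_{a+b=i} [G_{(a)}, G_{(b)}]$; applying $\rho$ and the inductive hypothesis together with the two closure properties just established gives $\rho(G_{(\lceil i/p\rceil)}^p) \subseteq (U^{(\lceil i/p\rceil)})^p \subseteq U^{(p\lceil i/p\rceil)} \subseteq U^{(i)}$ and $\rho([G_{(a)},G_{(b)}]) \subseteq [U^{(a)},U^{(b)}] \subseteq U^{(i)}$, so $\rho(G_{(i)}) \subseteq U^{(i)}$. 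Taking $i = n+1$ yields $\rho(G_{(n+1)}) \subseteq U^{(n+1)} = \{I\}$, proving (1). For part (2), note that $\bar\U_{n+1}(\F_p) = \U_{n+1}(\F_p)/Z_{n+1}(\F_p)$, so given $\bar\rho\colon G \to \bar\U_{n+1}(\F_p)$ one may choose (not necessarily homomorphically, but that is irrelevant here) to work with the quotient filtration: the image $\bar U^{(i)}$ of $U^{(i)}$ gives a central filtration on $\bar\U_{n+1}(\F_p)$ with the same closure properties, and $\bar U^{(n+1)}$ is trivial because $U^{(n+1)} = \{I\} \subseteq Z_{n+1}(\F_p)$; repeating the induction verbatim gives $\bar\rho(G_{(n+1)}) = \{I\}$.

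The main obstacle I anticipate is not conceptual but is locating the clean statement of the Jennings--Lazard characterization of $G_{(n)}$ as the minimal central filtration closed under $p$-th powers in the degree-multiplying sense, so that the inductive comparison above is justified for a \emph{profinite} $G$ with continuous $\rho$; once that characterization (or, equivalently, the defining inductive formula, which is what I used above and requires no external input) is in hand, the remaining work is the purely mechanical verification of the depth and $p$-power estimates on $\U_{n+1}(\F_p)$.
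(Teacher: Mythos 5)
Your proof is correct and is essentially the paper's argument: the paper disposes of both parts by citing the single fact that $\U_{n+1}(\F_p)_{(n+1)}=1$ (combined with the functoriality of the Zassenhaus filtration under continuous homomorphisms), and your depth-filtration induction is exactly the standard verification of that fact, carried over verbatim to the quotient $\bar\U_{n+1}(\F_p)$. One cosmetic slip worth fixing: the clause ``supported at depth $\geq ki \geq pi$'' for $1\le k\le p-1$ is false as written (indeed $ki<pi$ there); those terms disappear because $\binom{p}{k}\equiv 0$ in $\F_p$, not because of their depth, and your ensuing conclusion that only $M^p$ survives is the part of the sentence you actually use.
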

\begin{proof} These follow from the fact that $\U_{n+1}(\F_p)_{(n+1)}=1$.
\end{proof}

\begin{lem}
\label{lem:G mod Zassenhaus subgroup}
The profinite group $G$ has the vanishing $n$-fold Massey product property with respect to $\F_p$ if and only if $G/G_{(n+1)}$ has this property also.
\end{lem}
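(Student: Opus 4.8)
The plan is to express both sides of the equivalence through the lifting criterion of Corollary~\ref{cor:vanishing-lifting}, and then to transport representations back and forth between $G$ and the quotient $Q := G/G_{(n+1)}$ using Lemma~\ref{lem:trivial on Zassenhauss subgroup}. Write $\pi\colon G\to Q$ for the quotient map. By Corollary~\ref{cor:vanishing-lifting}, $G$ has the vanishing $n$-fold Massey product property with respect to $\F_p$ if and only if every representation $\bar\rho\colon G\to \bar\U_{n+1}(\F_p)$ admits a representation $\rho\colon G\to \U_{n+1}(\F_p)$ with $\rho_{i,i+1}=\bar\rho_{i,i+1}$ for $1\le i\le n$, and likewise for $Q$. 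So the whole statement reduces to matching up these lifting problems for $G$ and for $Q$.

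The key structural input is that both target groups $\U_{n+1}(\F_p)$ and $\bar\U_{n+1}(\F_p)$ have trivial $(n+1)$-th Zassenhaus subgroup, so by Lemma~\ref{lem:trivial on Zassenhauss subgroup} every continuous homomorphism from $G$ into either of them is trivial on $G_{(n+1)}$ and therefore factors uniquely through $\pi$. This yields, by composition with $\pi$, a bijection between representations $G\to \U_{n+1}(\F_p)$ and representations $Q\to \U_{n+1}(\F_p)$, and similarly for $\bar\U_{n+1}(\F_p)$; moreover these bijections are compatible both with the projection $\U_{n+1}(\F_p)\to \bar\U_{n+1}(\F_p)$ (post-composition) and with passing to the superdiagonal coordinates $(\,\cdot\,)_{i,i+1}$. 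Concretely, for the forward direction I would, given $\bar\sigma\colon Q\to \bar\U_{n+1}(\F_p)$, form $\bar\rho=\bar\sigma\circ\pi$; if $G$ has the property there is a lift $\rho$ with $\rho_{i,i+1}=\bar\rho_{i,i+1}$, and since $\rho$ is trivial on $G_{(n+1)}$ it descends to $\sigma\colon Q\to \U_{n+1}(\F_p)$ with $\rho=\sigma\circ\pi$. For the reverse direction one simply pulls a lift of $\bar\rho=\bar\sigma\circ\pi$ back along $\pi$.

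The only point requiring a small amount of care — and the nearest thing to an obstacle in an otherwise formal argument — is checking that the induced map $\sigma$ really solves the lifting problem for $Q$, i.e. that $\sigma_{i,i+1}=\bar\sigma_{i,i+1}$. This follows because $\sigma_{i,i+1}\circ\pi=\rho_{i,i+1}=\bar\rho_{i,i+1}=\bar\sigma_{i,i+1}\circ\pi$, and $\pi$ is surjective, so the equality may be cancelled on the right. Once this compatibility is recorded, both implications follow immediately, and the genuine content of the lemma is seen to be entirely concentrated in the factorization statement of Lemma~\ref{lem:trivial on Zassenhauss subgroup}.
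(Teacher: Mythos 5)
Your proposal is correct and is precisely the argument the paper intends: the paper's proof is the single line ``This follows from Corollary~\ref{cor:vanishing-lifting} and Lemma~\ref{lem:trivial on Zassenhauss subgroup},'' and you have simply written out in full the bijection of lifting problems that this citation compresses. Nothing is missing and no different route is taken.
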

\begin{proof}
This follows from Corollary~\ref{cor:vanishing-lifting} and Lemma~\ref{lem:trivial on Zassenhauss subgroup}.
\end{proof}

\begin{prop}
\label{prop:equiv}
Let $N,N^\prime$ be closed normal subgroups of a free pro-$p$-group $S$ such that $NS_{(n+1)}=N^\prime S_{(n+1)}$.
Then $G=S/N$ has the vanishing $n$-fold Massey product property with respect to $\F_p$ if and only if
$G^\prime=S/N^\prime$ has the vanishing $n$-fold Massey product property with respect to $\F_p$.
\end{prop}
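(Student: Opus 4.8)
The plan is to reduce both sides to the single group $S/(S_{(n+1)}N)$, which the hypothesis forces to coincide for $N$ and $N'$, and then to invoke Lemma~\ref{lem:G mod Zassenhaus subgroup}.

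First I would record the functoriality of the $p$-Zassenhaus filtration: for any continuous surjection $\phi\colon S\surj G$ of profinite groups one has $\phi(S_{(m)})=G_{(m)}$ for every $m\geq 1$. This follows by induction on $m$ from the inductive definition
\[
G_{(m)}=G_{(\lceil m/p\rceil)}^p\prod_{i+j=m}[G_{(i)},G_{(j)}],
\]
using that $\phi$, being a continuous surjection of compact groups, is a closed map and commutes with the formation of $p$-th powers, commutators, and topological closures (note that for $m\geq 2$ the indices $\lceil m/p\rceil$ and each $i,j$ with $i+j=m$ are all strictly smaller than $m$, so the inductive hypothesis applies). Applied to the quotient map $\phi\colon S\surj G=S/N$ this gives $G_{(m)}=S_{(m)}N/N$ for all $m$, and in particular $G_{(n+1)}=S_{(n+1)}N/N$.

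Next, by the third isomorphism theorem, $G/G_{(n+1)}\cong S/(S_{(n+1)}N)$, and likewise $G'/G'_{(n+1)}\cong S/(S_{(n+1)}N')$. The hypothesis $NS_{(n+1)}=N'S_{(n+1)}$ says precisely that the normal subgroups $S_{(n+1)}N$ and $S_{(n+1)}N'$ of $S$ coincide, so the two finite quotients are equal:
\[
G/G_{(n+1)}\cong S/(S_{(n+1)}N)=S/(S_{(n+1)}N')\cong G'/G'_{(n+1)}.
\]

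Finally I would apply Lemma~\ref{lem:G mod Zassenhaus subgroup} to each group: $G$ has the vanishing $n$-fold Massey product property with respect to $\F_p$ if and only if $G/G_{(n+1)}$ does, and $G'$ has it if and only if $G'/G'_{(n+1)}$ does. Since $G/G_{(n+1)}\cong G'/G'_{(n+1)}$, the property transfers, which is the assertion. The only step requiring any care is the functoriality identity $\phi(S_{(m)})=G_{(m)}$; I do not expect a genuine obstacle, since the real content of the proposition — that the vanishing property depends only on the quotient $S/S_{(n+1)}N$ — is already packaged in Lemma~\ref{lem:G mod Zassenhaus subgroup}. Note that freeness of $S$ plays no role in this argument.
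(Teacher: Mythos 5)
Your argument is correct and is essentially identical to the paper's proof: both use that continuous surjections carry $p$-Zassenhaus filtrations onto $p$-Zassenhaus filtrations to identify $G/G_{(n+1)}\cong S/NS_{(n+1)}=S/N'S_{(n+1)}\cong G'/G'_{(n+1)}$, and then conclude via Lemma~\ref{lem:G mod Zassenhaus subgroup}. Your added details (the induction establishing functoriality, and the observation that freeness of $S$ is not needed) are accurate but do not change the route.
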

\begin{proof}
Because surjective homomorphisms take $n$th $p$-Zassenhaus filtrations onto $n$th $p$-Zassenhaus filtrations,  using our assumption, we have
\[
G/G_{(n+1)}\cong S/NS_{(n+1)}=S/N'S_{(n+1)}\cong G^\prime/G^\prime_{(n+1)}.
\]
Therefore our result follows from Lemma~\ref{lem:G mod Zassenhaus subgroup}.
\end{proof}

\section{First examples}

\begin{ex}
\label{ex:free}
If $G$ is a free pro-$p$-group, then it has the $n$-fold Massey product vanishing property for every $n\geq2$ because $H^2(G,\F_p)=0$.
Alternatively, this follows from the universal property of $G$ and condition (ii) of Corollary \ref{cor:vanishing-lifting}.
\end{ex}
Recall that a profinite group $G$ is projective (in the category of profinite groups)  if for any finite groups $A$ and $B$, and for any surjective morphisms $\rho\colon G \to A$, $\alpha\colon B\to A$, there exists a homomorphism $\gamma\colon G\to B$ such that $\rho=\gamma\circ \alpha$. (See \cite[page 207]{FJ}.)
\begin{ex}
\label{ex:projective}
Let $G$ be a projective group, then it has the $n$-fold Massey product vanishing property for every $n\geq 3$ and for every $p$. This follows directly from  the definition of projective groups and  condition (ii) of Corollary \ref{cor:vanishing-lifting}.
\end{ex}

A pro-$p$-group $G$ is said to be a Demushkin group if
\begin{enumerate}
\item $\dim_{\F_p} H^1(G,\F_p)<\infty,$ 
\item $\dim_{\F_p} H^2(G,\F_p)=1,$
\item  the cup product $H^1(G,\F_p)\times H^1(G,\F_p)\to H^2(G,\F_p)$ is a non-degenerate bilinear form.
\end{enumerate}

\begin{thm}
\label{thm:Demushkin}
Let $n\geq 3$ be an integer and $p$ a prime number. Then every pro-$p$ Demushkin group has  the vanishing $n$-fold Massey product property with respect to $\F_p$.
\end{thm}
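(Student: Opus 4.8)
The plan is to reduce the vanishing of all $n$-fold Massey products ($n\geq 3$) for a Demushkin group $G$ to the lifting criterion of Corollary~\ref{cor:vanishing-lifting}(ii). So I must show: for every homomorphism $\bar\rho\colon G\to \bar\U_{n+1}(\F_p)$, there is a lift $\rho\colon G\to \U_{n+1}(\F_p)$ with the same superdiagonal entries. The obstruction to lifting $\bar\rho$ through the central extension $0\to \F_p\to \U_{n+1}(\F_p)\to \bar\U_{n+1}(\F_p)\to 1$ lives in $H^2(G,\F_p)$, and by Theorem~\ref{thm:Dwyer} (together with Proposition~\ref{prop:Dwyer}) this obstruction is exactly the Massey product value $\langle -\bar\rho_{12},\ldots,-\bar\rho_{n,n+1}\rangle_{\bar\rho}$. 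By Remark~\ref{rmk:indet}, to prove the product contains $0$ it suffices to show its value lies in the indeterminacy $\alpha_1\cup H^1(G,\F_p)+H^1(G,\F_p)\cup \alpha_n$, where $\alpha_i=-\bar\rho_{i,i+1}$.

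The key structural input is the Demushkin hypothesis: $\dim_{\F_p}H^2(G,\F_p)=1$ and the cup product $H^1\times H^1\to H^2$ is a \emph{non-degenerate} bilinear form. First I would dispose of the degenerate cases where the Massey product is automatically trivial or forced into the indeterminacy. Concretely, I would argue as follows: the value of the Massey product is a single element $v\in H^2(G,\F_p)$, which is one-dimensional; the indeterminacy subspace is $\alpha_1\cup H^1+H^1\cup \alpha_n$. If either $\alpha_1\neq 0$ or $\alpha_n\neq 0$, then by non-degeneracy of the cup product the subspace $\alpha_1\cup H^1$ (resp.\ $H^1\cup\alpha_n$) already fills all of the one-dimensional space $H^2(G,\F_p)$, so the indeterminacy is everything, and the product trivially contains $0$. (Here I use that for $a\neq 0$ in $H^1$, non-degeneracy supplies some $b$ with $a\cup b\neq 0$, hence $a\cup H^1=H^2$.) Thus the only case requiring genuine work is $\alpha_1=\alpha_n=0$.

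The remaining case $\alpha_1=\alpha_n=0$ means $\bar\rho_{12}=\bar\rho_{n,n+1}=0$. The plan is to show the Massey product value still vanishes, now exploiting that $\bar\rho$ factors through a smaller unipotent quotient because its first superdiagonal entry at position $(1,2)$ and last at $(n,n+1)$ are zero. When $\alpha_1=0$ the representation $\bar\rho$ has trivial action linking the first coordinate, so after conjugation/restriction one can view the defining system as coming from a shorter Massey product, or one can directly check that the $2$-cocycle $\sum_{k=2}^n a_{1k}\cup a_{k,n+1}$ representing the value becomes a coboundary: with $a_{12}=a_{n,n+1}=0$, the outer terms of the sum drop, and the surviving pieces are cup products of cochains that can be reorganized using the derivation property of $\partial$. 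I would compute that the resulting class is a cup product $\beta\cup\gamma$ with $\beta$ or $\gamma$ a superdiagonal character forced to be zero, so $v=0$ in $H^2$ outright. The main obstacle is precisely this last case: making the reduction to a shorter Massey product rigorous and confirming the surviving cocycle is a coboundary, since here the one-dimensionality of $H^2$ and non-degeneracy no longer immediately kill the class through indeterminacy and one must instead trace the explicit defining-system relations $\partial a_{ij}=\sum_{l} a_{il}\cup a_{lj}$ to see the value vanish on the nose.
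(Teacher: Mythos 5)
Your main argument is the same as the paper's: since $H^2(G,\F_p)$ is one-dimensional and the cup product is non-degenerate, any nonzero $\alpha_1$ (or $\alpha_n$) makes $\alpha_1\cup H^1$ all of $H^2$, so by Remark~\ref{rmk:indet} the indeterminacy swallows the value and the product contains $0$. The only divergence is the degenerate case $\alpha_1=0$ (resp.\ $\alpha_n=0$), which the paper dispatches by citing Fenn's Lemma 6.2.4, and here your sketch is imprecise: for an arbitrary defining system with $a_{12}=0$ the value $\sum_{k=2}^{n}a_{1k}\cup a_{k,n+1}$ still contains the terms $a_{1k}\cup a_{k,n+1}$ for $k\geq 3$, and these need not form a coboundary, so the class is not ``$0$ on the nose'' as you assert. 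The correct (and easy) fix is to \emph{change} the defining system: keep all $a_{ij}$ with $i\geq 2$ and set $a_{1j}=0$ for every $j$; the conditions $\partial a_{1j}=\sum_{l}a_{1l}\cup a_{lj}$ then hold trivially, and this new defining system has value exactly $0$, so the product (being the union over all defining systems) contains $0$. With that one-line repair your proof is complete and matches the paper's.
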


The following proof is adapted from that of \cite[Lemma 3.5]{HW}.
\begin{proof}
Let $G$ be a pro-$p$  Demushkin group. Let $\chi_1,\ldots,\chi_n$ be in $H^1(G,\F_p)$. Assume that the triple Massey product $\langle \chi_1,\ldots,\chi_n\rangle$ is defined. If $\chi_1=0$ then by \cite[Lemma 6.2.4]{Fe}, which is valid in the profinite case as well,  $\langle \chi_1,\ldots,\chi_n\rangle$ contains 0. So we may assume that $\chi_1\not=0$. In this case, to show that $\langle \chi_1,\ldots,\chi_n\rangle$ contains 0, we only need to show that
\[
\chi_1\cup (-): H^1(G,\F_p)\to H^2(G,\F_p)
\]
is surjective by Remark~\ref{rmk:indet}. From the definition of Demushkin groups, one has 
\[H^2(G,\F_p)\simeq \F_p.\]
 So it is enough to show that the map $\chi_1\cup (-) $ is non-zero. But this follows from 
 the non-degenerate property of the cup product 
$H^1(G,\F_p)\times H^1(G,\F_p)\to H^2(G,\F_p).$
\end{proof}

\begin{rmk} If $F$ is a finite field extension of $\Q_p$ containing a primitive $p$-th root of unity, then $G_F(p)$ is a pro-$p$ Demushkin group.  In \cite{Sha}, Shafarevich showed that if $F$ is as above, but $F$ does not contain any primitive $p$-th root of unity, then $G_F(p)$ is a free pro-$p$-group.

Demushkin groups along with free pro-$p$-groups, abelian torsion free pro-$p$-groups, and cyclic groups of order 2 play a dominant role in the current investigation of finitely generated subgroups of maximal pro-$p$ quotients $G_F(p)$ of absolute Galois groups. The elementary conjecture predicts that these groups above are all "building blocks" for $G_F(p)$. (See \cite{Ef1, Mar, LLMS, JW}.)
\end{rmk}

\begin{prop}
\label{prop:free product}
Let $G_1,G_2$  be two pro-$p$-groups. Then the free pro-$p$ product $G_1*G_2$  has the vanishing $n$-fold Massey product property with respect to $\F_p$ if and only if both $G_1$ and $G_2$ have this property as well.
\end{prop}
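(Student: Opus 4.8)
The plan is to reduce both implications to the lifting criterion of Corollary~\ref{cor:vanishing-lifting} and then exploit the universal property of the free pro-$p$ product $G=G_1\ast G_2$. Write $\iota_1\colon G_1\to G$ and $\iota_2\colon G_2\to G$ for the canonical inclusions. The two formal facts I will use repeatedly are: (a) given continuous homomorphisms $f_1\colon G_1\to H$ and $f_2\colon G_2\to H$ into any pro-$p$-group $H$, there is a unique continuous $f\colon G\to H$ with $f\circ\iota_1=f_1$ and $f\circ\iota_2=f_2$; and (b) since $G$ is topologically generated by $\iota_1(G_1)\cup\iota_2(G_2)$, two continuous homomorphisms out of $G$ coincide as soon as they agree after composition with $\iota_1$ and with $\iota_2$. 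I will also use the fact, recorded just before Theorem~\ref{thm:Dwyer}, that for a unipotent representation $\sigma$ into $\U_{n+1}(\F_p)$ or $\bar\U_{n+1}(\F_p)$ each superdiagonal entry $\sigma_{i,i+1}$ is a genuine group homomorphism into the abelian group $\F_p$.

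For the implication ``both factors have the property $\Rightarrow$ the free product has it'', I would begin with an arbitrary representation $\bar\rho\colon G\to\bar\U_{n+1}(\F_p)$ and restrict it to the factors, setting $\bar\rho_t=\bar\rho\circ\iota_t$ for $t=1,2$. Applying condition (ii) of Corollary~\ref{cor:vanishing-lifting} to each $G_t$ produces lifts $\rho_t\colon G_t\to\U_{n+1}(\F_p)$ with $(\rho_t)_{i,i+1}=(\bar\rho_t)_{i,i+1}$ for all $i$. Gluing $\rho_1,\rho_2$ by the universal property (a) yields a single $\rho\colon G\to\U_{n+1}(\F_p)$ with $\rho\circ\iota_t=\rho_t$. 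It then remains to check that $\rho_{i,i+1}=\bar\rho_{i,i+1}$ as homomorphisms $G\to\F_p$: both are homomorphisms, and composing each with $\iota_t$ gives $(\rho_t)_{i,i+1}=(\bar\rho_t)_{i,i+1}$, so by (b) they are equal. Corollary~\ref{cor:vanishing-lifting} applied to $G$ then gives the vanishing property for $G_1\ast G_2$.

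For the converse, suppose $G$ has the property and let $\bar\sigma\colon G_1\to\bar\U_{n+1}(\F_p)$ be arbitrary. I extend $\bar\sigma$ to $G$ by gluing it, via (a), with the trivial homomorphism $G_2\to\bar\U_{n+1}(\F_p)$, obtaining $\bar\rho\colon G\to\bar\U_{n+1}(\F_p)$ with $\bar\rho\circ\iota_1=\bar\sigma$. Since $G$ has the property, Corollary~\ref{cor:vanishing-lifting} supplies $\rho\colon G\to\U_{n+1}(\F_p)$ with $\rho_{i,i+1}=\bar\rho_{i,i+1}$. Restricting along $\iota_1$ gives $\rho\circ\iota_1\colon G_1\to\U_{n+1}(\F_p)$, whose superdiagonal entries are $\bar\rho_{i,i+1}\circ\iota_1=\bar\sigma_{i,i+1}$; hence $G_1$ has the vanishing property by Corollary~\ref{cor:vanishing-lifting}, and the argument for $G_2$ is identical.

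The only genuinely delicate point, and the step I would verify carefully, is the claim in the forward direction that the glued map $\rho$ is a valid lift in the sense of Corollary~\ref{cor:vanishing-lifting}, that is, that its superdiagonal matches that of $\bar\rho$ on all of $G$ rather than merely on each factor separately. This is exactly where it matters that Corollary~\ref{cor:vanishing-lifting} requires only agreement of the superdiagonal entries (not that $\rho$ project onto $\bar\rho$), and that each such entry is a homomorphism into the abelian group $\F_p$, so that fact (b) applies. Everything else is a formal consequence of the universal property of $\ast$ and presents no obstacle.
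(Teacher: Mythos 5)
Your proof is correct and follows essentially the same route as the paper: both directions reduce to condition (ii) of Corollary~\ref{cor:vanishing-lifting}, the forward direction by lifting on each factor and gluing via the universal property of $G_1\ast G_2$ (checking the superdiagonal entries agree because they are homomorphisms into $\F_p$ determined on the two factors), and the converse by extending a representation of one factor trivially on the other. The delicate point you flag is exactly the verification the paper carries out.
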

\begin{proof} Assume that $G_1,G_2$ have  the vanishing $n$-fold Massey product property. 
Let $\bar\rho\colon G_1*G_2\to \bar\U_{n+1}(\F_p)$ be any homomorphism.
By Corollary \ref{cor:vanishing-lifting}, we need to find a homomorphism $\rho\colon G_1*G_2\to\U_{n+1}(\F_p)$
such that $\rho_{j,j+1}=\bar\rho_{j,j+1}$, $j=1,2,\ldots, n$.

For each $i=1,2$ let $\kappa_i\colon G_i\to G_1*G_2$ be the natural monomorphism, and set $\bar\rho_i=\bar\rho\circ\kappa_i$.
Since $G_i$ has the vanishing $n$-fold Massey product property, there is a representation
$\rho_i\colon G_i\to \U_{n+1}(\F_p)$ such that $(\rho_i)_{j,j+1}= ( \bar\rho_i)_{j,j+1}$, for $j=1,2,\ldots, n$.
The universal property of free products yields a unique homomorphism $\rho \colon G_1*G_2\to \U_{n+1}(\F_p)$
such that $\rho\circ\kappa_i=\rho_i$, $i=1,2$.
For $i=1,2$ and $ j=1,2,\ldots,n$ we have
\[
\rho_{j,j+1}\circ\kappa_i=(\rho\circ\kappa_i)_{j,j+1}
= (\rho_i)_{j,j+1}= (\bar\rho_i)_{j,j+1}\\
= (\bar\rho\circ\kappa_i)_{j,j+1}=\bar\rho_{j,j+1}\circ\kappa_i,
\]
so $\rho_{j,j+1}=\bar\rho_{j,j+1}$, as desired.

Conversely, assume that $G_1* G_2$ has the vanishing $n$-fold Massey product property. Let $\bar\rho_1:G_1\to \bar\U_{n+1}(\F_p)$ be any representation of $G_1$. Let $\bar{\rho}_2:G_2\to \bar{\U}_{n+1}(\F_p)$ be the trivial homomorphism. Then by the universal property of free products, there exists a homomorphism $\bar\rho:G_1*G_2\to \bar\U_{n+1}(\F_p)$ such that $\bar\rho_1=\bar\rho\circ \kappa_1$. Since $G_1* G_2$ has the vanishing $n$-fold Massey product property, there exists a homomorphism $\rho\colon G_1*G_2\to \U_{n+1}(\F_p)$
 such that $\rho_{j,j+1}=\bar\rho_{i,i+1}$ for $i=1,2,\ldots,n$. Let $\rho_1:G_1\to \U_{n+1}(\F_p)$ be the composite $\rho\circ\kappa_1$. Then for $i=1,2,\ldots,n$, we have
 \[
 (\rho_1)_{i,i+1}= (\rho\circ\kappa_1)_{i,i+1}= \rho_{i,i+1}\circ\kappa_1= \bar\rho_{i,i+1}\circ\kappa_1= (\bar\rho\circ \kappa_1)_{i,i+1}=(\bar\rho_1)_{i,i+1}.
 \]
 Hence by Corollary \ref{cor:vanishing-lifting}, $G_1$ has the vanishing $n$-fold Massey product property. 

Similarly, $G_2$ has the vanishing $n$-fold Massey product property.
  \end{proof}

 Let $p>2$ be an odd prime and $G$  a pro-$p$-group. Let $\chi$ be an element in $H^1(G,\F_p)$. In \cite[Section 3]{Kra}, Kraines defined a restricted $n$-fold Massey product $\langle \chi\rangle^n$. If a restricted $n$-fold Massey product $\langle\chi\rangle^n$is defined then so is the $n$-fold Massey product $\langle\chi,\ldots,\chi\rangle$, and the latter contains the former. Kraines showed that  $\langle \chi\rangle^n=0$ for $n=2,\ldots,p-1$ and $\langle \chi\rangle^p$ is defined (\cite[Theorem 15]{Kra}). In fact
 $\langle \chi\rangle^p=-\beta(\chi)$, where $\beta: H^1(G,\F_p)\to H^2(G,\F_p)$ is the Bockstein homomorphism, i.e., the connecting homomorphism induced by the exact sequence
\[
0\to \Z/p \to \Z/p^2 \to \Z/p \to 0.
\] 
Using Kraines' results mentioned above, we obtain the following result. 
 \begin{prop}
 \label{prop:p-Massey} 
Let $n$ be an integer with $2<n\leq p$.  Let $F$ be any field containing a primitive $p$-th root of unity if ${\rm char}(F)\not=p$. Let $G$ be the absolute Galois group $G_F$ of $F$ or its maximal pro-$p$ quotient $G_F(p)$. Then for any  $\chi \in H^1(G,\F_p)$, the $n$-fold Massey product $\langle \chi,\ldots,\chi\rangle$ is defined and contains 0.
\end{prop}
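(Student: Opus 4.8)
The plan is to reduce everything to the pro-$p$ case and then to invoke Kraines' computation of the restricted Massey powers $\langle\chi\rangle^n$. First I would use Remark~\ref{rmk:G and G(p)} to replace $G_F$ by its maximal pro-$p$ quotient $G_F(p)$: since $\pi^*$ identifies $H^1(G_F,\F_p)$ with $H^1(G_F(p),\F_p)$, and since $\langle\chi,\ldots,\chi\rangle$ is defined and contains $0$ over $G_F$ if and only if the corresponding product is defined and contains $0$ over $G_F(p)$, I may assume from the outset that $G$ is a pro-$p$-group, so that Kraines' restricted products are available.

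For the range $2<n<p$ the conclusion is immediate. By \cite[Theorem 15]{Kra} the restricted product $\langle\chi\rangle^n$ is defined and equals $0$; since the definedness of $\langle\chi\rangle^n$ forces $\langle\chi,\ldots,\chi\rangle$ to be defined, and the ordinary product contains the restricted one, we obtain $0\in\langle\chi,\ldots,\chi\rangle$.

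The only remaining case, and the crux of the argument, is $n=p$. Here $\langle\chi\rangle^p$ is defined with value $-\beta(\chi)$, so $\langle\chi,\ldots,\chi\rangle$ is defined and contains $-\beta(\chi)$. By Remark~\ref{rmk:indet}, applied with all entries equal to $\chi$, the full $p$-fold product then contains the coset
\[
-\beta(\chi)+\chi\cup H^1(G,\F_p)+H^1(G,\F_p)\cup\chi .
\]
Thus it suffices to show that $\beta(\chi)$ lies in $\chi\cup H^1(G,\F_p)$. This is exactly the step that uses the hypothesis that $F$ contains a primitive $p$-th root of unity $\zeta$: under the Kummer identification $H^1(G,\F_p)\cong F^*/(F^*)^p$ determined by $\zeta$, the Bockstein is realized as a cup product with a fixed degree-one class, namely $\beta(\chi)=\chi_\zeta\cup\chi$ for all $\chi$, where $\chi_\zeta$ is the Kummer class of $\zeta$. (As a sanity check, if $\mu_{p^2}\subset F$ then $\zeta\in(F^*)^p$, so $\chi_\zeta=0$ and $\beta$ vanishes, as it should.) Granting this, $\beta(\chi)\in H^1(G,\F_p)\cup\chi$, which for odd $p$ coincides with $\chi\cup H^1(G,\F_p)$ by graded-commutativity of the cup product, and hence $0\in -\beta(\chi)+\chi\cup H^1(G,\F_p)\subset\langle\chi,\ldots,\chi\rangle$.

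I expect the main obstacle to be precisely the identity $\beta(\chi)=\chi_\zeta\cup\chi$: everything else is bookkeeping with Kraines' theorem, the reduction to $G_F(p)$, and the indeterminacy formula, whereas this relation is where the presence of $\mu_p$ genuinely enters and where the Bockstein must be rewritten as a cup product against a fixed class so that it is absorbed into the Massey-product indeterminacy. I would establish it either by a direct cochain computation combining the exact sequence $0\to\Z/p\to\Z/p^2\to\Z/p\to 0$ with the Kummer sequence, or by citing the standard Galois-cohomological fact; in either case it is the one nontrivial input beyond Kraines' results.
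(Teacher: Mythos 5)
Your proposal is correct and follows essentially the same route as the paper's proof: reduction to $G_F(p)$ via Remark~\ref{rmk:G and G(p)}, Kraines' theorem for $2<n<p$, and for $n=p$ the identity $\langle\chi\rangle^p=-\beta(\chi)$ combined with the rewriting of the Bockstein as a cup product with the Kummer class of a fixed primitive $p$-th root of unity (the paper cites the proof of Proposition~3.2 of \cite{EM1} for precisely this) and the indeterminacy formula of Remark~\ref{rmk:indet}. The only case you do not address is ${\rm char}(F)=p$, where no primitive $p$-th root of unity exists and your Kummer/Bockstein identity is unavailable; the paper disposes of it in one line by observing that $G_F(p)$ is then a free pro-$p$-group, so all Massey products vanish.
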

\begin{proof}
It is enough to consider the case $G=G_F(p)$ by Remark~\ref{rmk:G and G(p)}. Also if ${\rm char} F=p$ then since $G_F(p)$ is a free pro-$p$-group, $\langle \chi,\ldots,\chi\rangle=0$. 
So we may assume that ${\rm char} F\not=p$ and let us fix a primitive $p$-th root of unity $\xi$. 
Then $\chi=\chi_a$ for some $a\in F^*$, 
where $\chi_a\in H^1(G_F,\F_p)= H^1(G_F(p),\F_p)$ is  the  character associated to $a$ via the Kummer map $F^*\to H^1(G_F,\F_p)=H^1(G_F(p),\F_p)$.

If $n<p$ then by \cite[Theorem 15]{Kra} $\langle \chi_a,\ldots,\chi_a\rangle$ contains $0=\langle \chi_a\rangle^n.$ 

Now we consider the case $n=p$.  Then $\langle \chi_a \rangle^p=-\beta(\chi_a)$. 
By \cite[proof of Proposition 3.2]{EM1}, $\beta(\chi_a)=\chi_a\cup \chi_\xi$ ($\xi\in F^*$ is a fixed primitive $p$-th root of unity). Hence by Remark~\ref{rmk:indet}, one has
\[0= \langle \chi_a\rangle^p +\chi_a\cup\chi_\xi \in \langle \chi_a,\ldots,\chi_a\rangle,
\]
as claimed.
\end{proof} 

\begin{ex}
\label{ex:p-cyclic}
 Let $p$ be an odd prime number and $G=\Z/p\Z$. Let $\chi\in H^1(G,\F_p)$ be the identity map. Then the $p$-fold Massey product $\langle \chi,\ldots,\chi\rangle$ is defined but does not contain 0. Suppose that, contrarily,  the $p$-fold Massey product $\langle\chi,\ldots,\chi\rangle$ contains 0, then there exists a representation $\rho\colon G \to \U_{p+1}(\F_p)$ such that $\rho_{i,i+1}=\chi$, for $i=1,\ldots,p$. Let $B:=\rho(\bar{1})\in \U_{p+1}(\F_p)$. Then all entries of $B$ at positions $(i,i+1)$, $i=1,\ldots, p$, are equal to 1. Hence $B^p\not=1$, and this contradicts  the fact that $B$ is the image of an element of order $p$.
\end{ex}
\begin{rmk} 
\label{rmk:Artin-Schreier}
Proposition~\ref{prop:p-Massey} and Example~\ref{ex:p-cyclic} immediately provide an explanation to a part of the well-known Artin-Schreier theorem \cite{AS1,AS2}  (respectively, Becker's theorem \cite{Be}) which says that the absolute Galois group $G_F$ (respectively, its maximal pro-$p$-quotient $G_F(p)$) of any field $F$ cannot have an element of odd prime order. (Note also that if $G_F\simeq \Z/p\Z$ then $F$ contains a primitive $p$-th root of unity.)

In \cite{MT2}, using Galois automatic realization of given groups, we shall prove a more general result  than Proposition~\ref{prop:p-Massey} in which the condition $n\leq p$ can be omitted, provided that if $p=2$ then $-1$ is a square in $F$. One  can then use this generalized result to show the full Artin-Schreier theorem (respectively, Becker's theorem). (See \cite{MT2}.)
\end{rmk}

\section{Splitting variety and the vanishing property}
Let $F$ be a field of characteristic $\not=2$. Let $G=G_{F}(2)$ be the maximal pro-2 Galois group of $F$.
Let $a,b,c$ be elements in $F^*$ and $\chi_a,\chi_b,\chi_c\in H^1(G,\F_2)$ be the characters corresponding to $a,b,c$ via the Kummer map $F^*\to H^1(G,\F_2)$. Let $X_{a,b,c}$ be the variety in $\G_m\times \A^4$ defined by the equation
\[
bX^2= (Y_1^2-aY_2^2+cY_3^2-acY_4^2)^2-4c(Y_1Y_3-aY_2Y_4)^2.
\]

\begin{proof}[First proof of Theorem~\ref{thm:vanishing}]
If $a$ (or $b$, or $c$) is in $(F^*)^2$ then the corresponding character $\chi_a$ (or $\chi_b$, or $\chi_c$) is the trivial character and hence the Massey product $\langle \chi_a,\chi_b,\chi_c\rangle$ contains 0 by \cite[Lemma 6.2.4]{Fe}. So we may assume that $a,b$ and $c$ are not in $(F^*)^2$. The following well-known fact will be used frequently: $\chi_a\cup\chi_b=0$ if and only if $b$ is in $N_{F(\sqrt{a})/F}(F(\sqrt{a})^*)$ (see e.g., \cite[Introduction, p. 4]{HW}, \cite[Chapter XIV, Propositions 4-5]{Se2}, or \cite[Lemma 8.4]{Sri}). 
There are two cases to consider.
\\
\\
{\bf Case 1:} $a/c$ is in $(F^*)^2$. Then $\chi_a=\chi_c$ and hence $\langle \chi_a,\chi_b,\chi_c\rangle =\langle \chi_a,\chi_b,\chi_a\rangle$ and we can assume $a=c$. 
Since $\langle \chi_a,\chi_b,\chi_a\rangle$ is defined, $\chi_a\cup\chi_b=0$. Hence $b\in N_{F(\sqrt{a})/F}(F(\sqrt{a})^*)$ and there exists $\alpha_1,\alpha_2\in F$ such that 
\[
b= N_{F(\sqrt{a})/F}(\alpha_1+\alpha_2\sqrt{a})=\alpha_1^2-a\alpha_2^2.
\]
If $\alpha_1\not=0$ then  let $ x=4\alpha_1\not=0, y_1=2\alpha_1,y_2=y_3=\alpha_2,y_4=0.$ One has
\[
\begin{aligned}
(y_1^2-ay_2^2+ay_3^2-a^2y_4^2)^2-4a (y_1y_3-ay_2y_4)^2=(4\alpha_1^2)^2- 4a(2\alpha_1\alpha_2)^2=16\alpha_1^2(\alpha_1^2-a \alpha_2^2)
= bx^2.
\end{aligned}
\]
If $\alpha_1=0$ then $b=-a\alpha_2^2$. Let $ x=4a\not=0, y_1=a,y_2=y_3=\alpha_2,y_4=-1.$ Then one has
\[
\begin{aligned}
(y_1^2-ay_2^2+ay_3^2-a^2y_4^2)^2-4a (y_1y_3-ay_2y_4)^2=0- 4a(2a\alpha_2)^2=bx^2.\\
\end{aligned}
\]
\\
{\bf Case 2:} $a/c$ is not in $(F^*)^2$. Since $\langle \chi_a,\chi_b,\chi_c\rangle$ is defined, $\chi_a\cup\chi_b=0=\chi_b \cup \chi_c$. Hence $b\in N_{F(\sqrt{a})/F}(F(\sqrt{a})^*)$ and $b\in N_{F(\sqrt{c})/F}(F(\sqrt{c})^*)$. Thus, there exists $\alpha_1,\alpha_2,\gamma_1,\gamma_2\in F$ such that
\[
\begin{aligned}
b&= N_{F(\sqrt{a})/F}(\alpha_1+\alpha_2\sqrt{a})&=\alpha_1^2-a\alpha_2^2\\
&= N_{F(\sqrt{c})/F}(\gamma_1+\gamma_2\sqrt{c})&=\gamma_1^2-c\gamma_2^2.
\end{aligned}
\]
Hence $c\gamma_2^2- a\alpha_2^2=\gamma_1^2-\alpha_1^2\not=0$ because $a/c$ and $b$ are not in $(F^*)^2$. Therefore $\alpha_1+\gamma_1\not=0$.

Let 
\[
x=2(\alpha_1+\gamma_1), y_1=\alpha_1+\gamma_1,y_2=\alpha_2,y_3=\gamma_2,y_4=0.
\]
Then 
\[
\begin{aligned}
(y_1^2-ay_2^2+cy_3^2-acy_4^2)^2-4c (y_1y_3-ay_2y_4)^2&=[(\alpha_1+\gamma_1)^2-a\alpha_2^2+c\gamma_2^2]^2- 4c[(\alpha_1+\gamma_1)\gamma_2]^2\\
&=[(\alpha_1+\gamma_1)^2+\gamma_1^2-\alpha_1^2]^2-4c(\alpha_1+\gamma_1)^2\gamma_2^2\\
&=4(\alpha_1+\gamma_1)^2\gamma_1^2 -4c (\alpha_1+\gamma_1)^2\gamma_2^2\\
&= 4 (\alpha_1+\gamma_1)^2(\gamma_1^2-c\gamma_2^2)\\
&=4(\alpha_1+\gamma_1)^2b= bx^2.
\end{aligned}
\]
Therefore the variety $X_{a,b,c}$ contains an $F$-rational point, namely $(x,y_1,y_2,y_3,y_4)$. Hence $\langle \chi_a,\chi_b,\chi_c\rangle$ contains 0 by \cite[Corollary 2.7]{HW}.
\end{proof} 

\section{Field theory and vanishing property}
\label{sec:2nd proof}

In this section we present another approach to prove Theorem~\ref{thm:vanishing} using Galois theory and \cite{GLMS}. 

Notation: For $a,b$ in a field $F$ of characteristic $\not=2$, $(a,b)_F$ is the quaternion algebra generated by $a$ and $b$. For $x,y,z$ in a group, $[x,y]=x^{-1}y^{-1}xy$.

\begin{proof}[Second proof of Theorem~\ref{thm:vanishing}]
 As in the first proof, we may assume that $a,b$ and $c$ are not in $(F^*)^2$.
 
Assume that the triple Massey product $\langle \chi_b,\chi_a,\chi_c\rangle$ is defined, we show that it contains 0. (Note that  the order in the triple Massey product here is different from the one in the first proof, because we want to be consistent with the notation in \cite{GLMS}.)
\\
\\
{\bf Case 1:} $a\equiv b\equiv c \bmod (F^*)^2$. Then $\langle \chi_b,\chi_a,\chi_c\rangle =\langle \chi_b,\chi_b,\chi_b\rangle$. Since $(b,b)_F=0$, $b$ is a norm of $F(\sqrt{b})/F$, i.e., $b=N_{F(\sqrt{b})/F}(\beta)$ for some $\beta \in F(\sqrt{b})$. Let $L=F(\sqrt{\beta})$, then $L/F$ is a Galois extension which is cyclic of order 4. Its Galois group is generated by  $\sigma_b\in \Gal(L/F)$, where $ \sigma_b(\sqrt{\beta})=\sqrt{b}/\sqrt{\beta}$.

One has the following homomorphism $\varphi:\Gal(L/F)\to \U_4(\F_2)$ by letting
\[
\sigma_b\mapsto B:=\begin{bmatrix}
1& 1 & 0 & 0\\
0& 1 & 1 & 0\\
0& 0 & 1 & 1\\
0& 0 & 0 & 1
\end{bmatrix}.
\]

Let $\rho$ be the  composite homomorphism $\rho:\Gal_F\to \Gal(L/F)\stackrel{\varphi}{\to} \U_4(\F_2)$. Then one can check that
\[
\rho_{i,i+1}=\chi_b\quad \forall i=1,2,3.
\]
(Note that $\sigma_b|_{F(\sqrt{b})/F}$ maps $\sqrt{b}$ to $-\sqrt{b}$ and here we are identifying $\F_2=\{-1,1\}=\{0,1\}$.)
Hence by Theorem~\ref{thm:Dwyer}, the triple Massey product $\langle \chi_b,\chi_b,\chi_b\rangle$ contains 0.
\\
\\
{\bf Case 2:} $a\equiv b \bmod (F^*)^2$ and $a\not \equiv c\bmod (F^*)^2$. This case can be treated in a similar way  to Case 3 below.
\\
\\
{\bf Case 3:} $a\not \equiv b\bmod (F^*)^2$ and $c\equiv a\bmod (F^*)^2$. Then $\langle \chi_b,\chi_a,\chi_c\rangle =\langle \chi_b,\chi_a,\chi_a\rangle$. Since $(a,a)_F=(a,b)_F=0$ in the Brauer group ${\rm Br}(F)$, by construction in \cite[Section 3]{GLMS}, we have a Galois extension $L/F$ which contains $F(\sqrt{a},\sqrt{b})$ with Galois group $G_1$ described below. Also there exist $\sigma_a,\sigma_b\in \Gal(L/F)$ such that 
\[ \sigma_a(\sqrt{a})=-\sqrt{a}, \sigma_a(\sqrt{b})=\sqrt{b}, \sigma_b(\sqrt{a})=\sqrt{a},\sigma_a(\sqrt{b})=-\sqrt{b}.\]
 Let $G_1$ be the group generated by two symbols $x,y$ subject to the relations: $x^4=y^2=1=(x,y)^2=(x,y,x)^2$ and $(x,y,x)$ commutes with $x$ and $y$. Then it is shown in \cite{GLMS} that $\sigma_a,\sigma_b$ generates $\Gal(L/F)$ and $\Gal(L/F)$ is isomorphic to $G_1$ by letting $\sigma_a\mapsto x$ and $\sigma_b\mapsto y$.

Let 
\[
 u:=\begin{bmatrix}
1& 0 & 0 & 0\\
0& 1 & 1 & 0\\
0& 0 & 1 & 1\\
0& 0 & 0 & 1
\end{bmatrix}, 
  v:=\begin{bmatrix}
1& 1 & 0 & 0\\
0& 1 & 0 & 0\\
0& 0 & 1 & 0\\
0& 0 & 0 & 1
\end{bmatrix}.
\]
Then $u^4=v^2=[u,v]^2=1$ and $[[u,v],u]$ is central and of order 2 in $\U_4(\F_2)$. Hence one has the  homomorphism $\varphi:\Gal(L/F)\to \U_4(\F_2)$ by letting $\sigma_a\mapsto u, \sigma_b\mapsto v$. (The homomorphism $\varphi$ is in fact injective so that $\varphi$ induces an isomorphism between $\Gal(L/F)$ and the subgroup generated by $u,v$. This follows from $Z(G_1)=\Z/2\Z$, which is the smallest non-trivial normal subgroup of $G_1$ and $[[u,v],u]\not=1$.)
 
Let $\rho$ be the  composite homomorphism $\rho:\Gal_F\to \Gal(L/F)\stackrel{\varphi}{\to} \U_4(\F_2)$. Then one can check that
\[
\rho_{12}=\chi_b \text{ and } \rho_{23}=\rho_{34}=\chi_a.
\]
Hence by Theorem~\ref{thm:Dwyer}, the triple Massey product $\langle \chi_b,\chi_a,\chi_a\rangle$ contains 0.
\\
\\
{\bf Case 4:} $a\not \equiv b\bmod (F^*)^2$ and $c\equiv b \bmod (F^*)^2$ . Then $\chi_b=\chi_c$ and hence $\langle \chi_b,\chi_a,\chi_c\rangle=\langle \chi_b,\chi_a,\chi_b\rangle$. By assumption, we have $(a,b)_F=0$. Hence $b=N_{F(\sqrt{a})/F}(\beta)$, for some $\beta\in F(\sqrt{a})$. Let $L=F(\sqrt{a},\sqrt{b},\sqrt{\beta})$. We define two automorphisms $\sigma_a, \sigma_b\in \Gal(L/F)$   as follows:
\[
\begin{aligned}
\sigma_a: &\sqrt{a}\mapsto -\sqrt{a}; \sqrt{b}\mapsto  \sqrt{b}; \sqrt{\beta}\mapsto -\sqrt{b}/\sqrt{\beta};\\
\sigma_b: &\sqrt{a}\mapsto \sqrt{a}; \sqrt{b}\mapsto -\sqrt{b}; \sqrt{\beta}\mapsto \sqrt{\beta}.\\
\end{aligned}
\]
Then the subgroup  of $\Gal(L/F)$ generated by $\sigma_a,\sigma_b$ is isomorphic to the dihedral group $D_4$ of order $8=[L:F]$. Hence $\Gal(L/F)$ is isomorphic to $D_4$ and generated by $\sigma_a,\sigma_b$.  One has the following homomorphism $\varphi:\Gal(L/F)\to \U_4(\F_2)$ by letting
\[
\sigma_a\mapsto u:=\begin{bmatrix}
1& 0 & 0 & 0\\
0& 1 & 1 & 0\\
0& 0 & 1 & 0\\
0& 0 & 0 & 1
\end{bmatrix},
\sigma_b\mapsto v:=
\begin{bmatrix}
1& 1 & 0 & 0\\
0& 1 & 0 & 0\\
0& 0 & 1 & 1\\
0& 0 & 0 & 1
\end{bmatrix}.
\]

Let $\rho$ be the  composite homomorphism $\rho:\Gal_F\to \Gal(L/F)\stackrel{\varphi}{\to} \U_4(\F_2)$. Then one can check that
\[
\rho_{23}=\chi_a \text{ and } \rho_{12}=\rho_{34}=\chi_b.
\]
Hence by Theorem~\ref{thm:Dwyer}, the triple Massey product $\langle \chi_b,\chi_a,\chi_b\rangle$ contains 0.
\\
\\
{\bf Case 5: } $a\not\equiv b\bmod (F^*)^2$ and $c\equiv ab\bmod (F^*)^2$.  Then $\langle \chi_b,\chi_a,\chi_c\rangle=\langle \chi_b,\chi_a,\chi_{ab}\rangle$.

By assumption, we have $(a,b)_F=(a,ab)_F=0$. Hence $(a,b)_F=(a,a)_F=0$. As in Case 3, we can construct a Galois extension $L/F$ with Galois group isomorphic to the group $G_1$.
Let  \[
A:=\begin{bmatrix}
1& 0 & 0 & 0\\
0& 1 & 1 & 0\\
0& 0 & 1 & 1\\
0& 0 & 0 & 1
\end{bmatrix},
 B:=
\begin{bmatrix}
1& 1 & 0 & 0\\
0& 1 & 0 & 0\\
0& 0 & 1 & 1\\
0& 0 & 0 & 1
\end{bmatrix}.
\]
Then $A^4=B^2=[A,B]^2$ and $[[A,B],A]$ is central and of order 2 in $\U_4(\F_2)$. Hence one has the following homomorphism $\varphi:\Gal(L/F)\to \U_4(\F_2)$ by letting $\sigma_a\mapsto A, \sigma_b\mapsto B$. Here $\sigma_a,\sigma_b$ as in Case 3. (The homomorphism $\varphi$ is in fact injective so that $\varphi$ induces an isomorphism between $\Gal(L/F)$ and the subgroup generated by $A,B$. This follows from $Z(G_1)=\Z/2\Z$ and $[[A,B],A]\not=1$.)

Let $\rho$ be the  composite homomorphism $\rho:\Gal_F\to \Gal(L/F)\stackrel{\varphi}{\to} \U_4(\F_2)$. Then one can check that
\[
\begin{aligned}
\rho_{12} =\chi_b, \; \rho_{23}=\chi_a,\; \rho_{34} =\chi_{ab}.
\end{aligned}
\]

Hence by Theorem~\ref{thm:Dwyer}, the triple Massey product $\langle \chi_b,\chi_a,\chi_{ab}\rangle$ contains 0.
\\
\\
{\bf Case 6}: $a,b,c$ are $\F_2$-independent in $F^*/{F^*}^2$.

Because $\langle \chi_b,\chi_a,\chi_c\rangle$ is defined, $(b,a)_F=(a,c)_F=0$. As in \cite{GLMS}, we have the following construction. 
There exist $\beta\in F(\sqrt{b})$ and $\gamma\in F(\sqrt{c})$ such that
$N_{F(\sqrt{b})/F}(\beta)=N_{F(\sqrt{c})/F}(\gamma)=a$. 
Let $E=F(\sqrt{b},\sqrt{c})$. Then \cite[Lemma 2.14]{Wad86} implies that there exist $\delta\in E$ and $d\in F$ such that $N_{E/F(\sqrt{b})}(\delta)=\beta d$ and $N_{E/F(\sqrt{c})}(\delta)=\gamma d$. Let $E^\prime=E(\sqrt{a})$, $K=E^\prime(\sqrt{\beta d},\sqrt{\gamma d})$ and $L=K(\sqrt{\delta})$. 
It is shown in \cite[Proof of Proposition 4.6]{GLMS} that there exist automorphisms $\sigma_a,\sigma_b,\sigma_c\in \Gal(L/F)$ such that $\sigma_{a}$ fixes $\sqrt{b},\sqrt{c}$ and $\sigma_a(\sqrt{a})=-\sqrt{a}$ and similarly for $\sigma_b,\sigma_c$. Furthermore, $\sigma_{a},\sigma_b,\sigma_c$ generates $\Gal(L/F)$. 

Let 
\[
X:=\begin{bmatrix}
1& 0 & 0 & 0\\
0& 1 & 1 & 0\\
0& 0 & 1 & 0\\
0& 0 & 0 & 1
\end{bmatrix}, 
 Y:=\begin{bmatrix}
1& 1 & 0 & 0\\
0& 1 & 0 & 0\\
0& 0 & 1 & 0\\
0& 0 & 0 & 1
\end{bmatrix}, 
 Z:=\begin{bmatrix}
1& 0 & 0 & 0\\
0& 1 & 0 & 0\\
0& 0 & 1 & 1\\
0& 0 & 0 & 1
\end{bmatrix}.
\]
By   direct computation    one has
\[
(1): X^2=Y^2=Z^2=1, [X,Y]^2=[X,Z]^2=[Y,Z]=1,
\]
and 
\[ (2):[Y,[X,Z]]=[Z,[X,Y]] \text{ is in the center and of order dividing 2}. 
\]
Hence there is a natural homomorphism $\varphi$ from $\Gal(L/F)\simeq G_2$  to $\U_4(\F_2)$ ($G_2$ is defined in \cite[Definition 4.4]{GLMS} as the group generated by $x,y,z$ satisfying two conditions (1)-(2) above).  As $X,Y,Z$ generates $\U_4(\F_2)$, $\varphi$ is surjective and hence an isomorphism because $|\Gal(L/F)|=|\U_4(\F_2)|=64$. Also from \cite[Proof of Proposition 4.7]{GLMS} one deduces that $\varphi$ maps $\sigma_a$ to $X$, $\sigma_b$ to $Y$ and $\sigma_c$ to $Z$.

Let $\rho$ be the following composite homomorphism $\rho:\Gal_F\to \Gal(L/F)\stackrel{\varphi}{\simeq} \U_4(\F_2)$. Then one can check that
\[
\begin{aligned}
\rho_{12} &=\chi_b,\; \rho_{23}=\chi_a, \; \rho_{34} =\chi_c.
\end{aligned}
\]

(Note that $\chi_a$ is the composition $\Gal_F\to \Gal(L/F)\to \Gal(F(\sqrt{a})/F)\simeq \F_2$, where the last map is the map sending $\sigma_a|_{F(\sqrt{a}/F)}$ to $1$ and similarly for $\chi_b,\chi_c$. Since  all the maps $\rho,\chi_a,\chi_b,\chi_c$ factor through $\Gal(L/F)$, it is enough to check on $\sigma_a,\sigma_b,\sigma_c$.)

Hence by Theorem~\ref{thm:Dwyer}, the triple Massey product $\langle \chi_b,\chi_a,\chi_c\rangle$ contains 0.
\end{proof}

\begin{rmk}
Because the Galois extensions $L/F$ with Galois group isomorphic with $\U_4(\F_2)$ play a fundamental role in the theory of triple Massey products, and for its use
in Galois theory, we shall describe the structure of these extensions. For further related results see \cite{GLMS} where $\U_4(\F_2)$ is called $G_2$. Let $X,Y,Z$ be matrices defined as in Case 6 of the previous proof. Then observe that
\[
\begin{aligned}
\U_4(\F_2)&=\{ X^\alpha Y^\beta Z^\gamma [X,Y]^\lambda [X,Z]^\mu [[X,Y],Z]^\nu\mid \alpha,\beta,\gamma,\lambda,\mu,\nu=0 \text{ or } 1\}\\
&= W \rtimes V,
\end{aligned}
\]
where $V$ is isomorphic to the Klein 4-group $V_4\simeq V = \langle Y,Z\rangle$ and $W\simeq \F_2[V]$.

Now let $L/F$ be a $\U_4(\F_2)$-Galois extension. Let $E$ be the fixed field of $L$ by $W$. Then $E/F$ is a $V_4$-extension so  $E=F(\sqrt{b},\sqrt{c})$, for some $b,c\in F^*$ where $b,c$ are linearly independent mod $(F^*)^2$. 

Since $W$ is a 2-elementary group, $\Gal(L/E)=W$ and by Kummer theory one has $L=E(\sqrt{M})$, where $M\subset E^*/(E^*)^2$ is dual to $W$. 
Then $M$ is isomorphic to $\F_2[V]$. 

Let $[\delta]\in E^*/(E^*)^2$ be a generator of $M$. We define two automorphisms $\sigma_b, \sigma_b$  in $\Gal(E/F)$ as follows:  $\sigma_{b}(\sqrt{b})=-\sqrt{b}$, $\sigma_b(\sqrt{c})=\sqrt{c}$; $\sigma_{c}(\sqrt{b})=\sqrt{b}$, $\sigma_c(\sqrt{c})=-\sqrt{c}$.
 Then 
\[
\sigma_b(\delta)\delta=N_{E/F(\sqrt{c})}(\delta); \sigma_c(\delta)\delta=N_{E/F(\sqrt{b})}(\delta),
\]
and set
\[
a:=\sigma_b(\sigma_c(\delta))\sigma_c(\delta)\sigma_b(\delta)\delta=N_{E/F}(\delta).
\]
Now since $M$ is 4-dimensional we have $a\not\in (E^*)^2$. Hence we have shown that: 

Each $\U_4(\F_2)$-Galois extension $L/F$ is a normal closure of $E(\sqrt{\delta})/F$ where 
\begin{enumerate}
\item $E/F$ is  a $V_4$-extension.
\item  $\delta\in E^*$ and $N_{E/F}(\delta)\not \in (E^*)^2$.
\end{enumerate}

One can see that the converse for this also holds. Namely, if $L/F$ is a normal closure of $E(\sqrt{\delta})/F$ where $E/F$ and $\delta$ satisfy 2 conditions (1)-(2) above, then $L/F$ is a $\U_4(\F_2)$-Galois extension.
\end{rmk}

\begin{thm} 
\label{thm:Massey vanishing}
 Let $G$ be the absolute Galois group $G_F$ of a field $F$ or its maximal 2-extension quotient $G_F(2)$. Then $G$ has the  vanishing triple Massey product property with respect to $\F_2$.
\end{thm}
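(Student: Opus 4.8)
The plan is to deduce this intrinsic, coordinate-free statement from Theorem~\ref{thm:vanishing}, whose conclusion is phrased only for Kummer characters $\chi_a$, by exploiting the fact that in $\F_2$-coefficients such characters already exhaust all of $H^1$. First I would reduce to the pro-2 quotient: by Corollary~\ref{cor:G and G(p)}, applied with $p=2$ and $n=3$, the group $G_F$ has the vanishing triple Massey product property with respect to $\F_2$ if and only if $G_F(2)$ does. Hence it suffices to treat $G=G_F(2)$, that is, to show that every \emph{defined} triple Massey product $\langle\alpha_1,\alpha_2,\alpha_3\rangle$ with $\alpha_1,\alpha_2,\alpha_3\in H^1(G_F(2),\F_2)$ contains $0$.

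Next I would separate the two characteristics. If $\Char(F)=2$, then by Witt's theorem $G_F(2)$ is a free pro-2-group, so $H^2(G_F(2),\F_2)=0$ and Example~\ref{ex:free} gives the vanishing of every $n$-fold Massey product for $n\geq 2$; in particular the triple product property holds. Thus I may assume $\Char(F)\neq 2$.

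The decisive step for $\Char(F)\neq 2$ is that every degree-one class is a Kummer character. Indeed, Kummer theory (with $-1\in F$ supplying the needed square root of unity) furnishes an isomorphism $F^*/(F^*)^2 \xrightarrow{\sim} H^1(G_F,\F_2)$, and composing with the inverse of the inflation isomorphism $\pi^*\colon H^1(G_F(2),\F_2)\xrightarrow{\sim}H^1(G_F,\F_2)$ from Remark~\ref{rmk:G and G(p)} shows that the Kummer map $F^*\to H^1(G_F(2),\F_2)$ is surjective. Hence each $\alpha_i$ can be written as $\alpha_i=\chi_{a_i}$ for some $a_i\in F^*$, and a defined triple Massey product $\langle\alpha_1,\alpha_2,\alpha_3\rangle=\langle\chi_{a_1},\chi_{a_2},\chi_{a_3}\rangle$ contains $0$ by Theorem~\ref{thm:vanishing}.

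Since all the genuine arithmetic content is already carried by Theorem~\ref{thm:vanishing}, I do not expect a serious obstacle at this stage. The only point deserving real attention is the surjectivity of the Kummer map onto the \emph{whole} of $H^1(G_F(2),\F_2)$: this is precisely what upgrades the a priori weaker elementwise statement about the specific characters $\chi_a,\chi_b,\chi_c$ to the intrinsic vanishing triple Massey product property of $G$, and it is the reason the elementwise Theorem~\ref{thm:vanishing} suffices to establish Theorem~\ref{thm:Massey vanishing} in full generality.
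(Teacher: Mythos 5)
Your proposal is correct and follows essentially the same route as the paper: reduce to $G_F(2)$ via Corollary~\ref{cor:G and G(p)}, dispose of characteristic $2$ by freeness, and invoke Theorem~\ref{thm:vanishing} otherwise. The only difference is that you spell out the surjectivity of the Kummer map $F^*\to H^1(G_F(2),\F_2)$, which the paper leaves implicit; that is a worthwhile clarification but not a different argument.
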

\begin{proof}
It is enough to consider the case $G=G_F(2)$ by Corollary~\ref{cor:G and G(p)}. 

If $F$ is of characteristic 2, then $G$ is free and hence $G$ has the vanishing triple Massey product property.

If $F$ is of characteristic $\not=2$, then $G$ has the vanishing triple Massey product property by Theorem~\ref{thm:vanishing}.
\end{proof}

\section{Groups without the triple vanishing property}
In this section, we construct pro-$p$-groups $G$ which do not have the vanishing triple Massey product property.
In particular, when $p=2$,  they are not realizable as $G_F(2)$ for any field $F$.

First we verify the following computational fact.

\begin{lem}
\label{lemma - triple commutator of matrices}
Let $a_i,b_i,c_i\in\F_p$, $i=1,2,3$, and set
\[
A=\begin{bmatrix}
1& 1 & a_1 & b_1\\
0& 1 & 0 & c_1\\
0& 0 & 1 & 0\\
0& 0 & 0 & 1
\end{bmatrix}, \quad
B=\begin{bmatrix}
1& 0 & a_2 & b_2\\
0& 1 & 1 & c_2\\
0& 0 & 1 & 0\\
0& 0 & 0 & 1
\end{bmatrix}, \quad
C=\begin{bmatrix}
1& 0 & a_3 & b_3\\
0& 1 & 0 & c_3\\
0& 0 & 1 & 1\\
0& 0 & 0 & 1
\end{bmatrix}.
\]
Then  $[[B,C],A]=\begin{bmatrix}
1& 0 & 0 & -1\\
0& 1 & 0 & 0\\
0& 0 & 1 & 0\\
0& 0 & 0 & 1
\end{bmatrix}$.
\end{lem}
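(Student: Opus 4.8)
The plan is to prove this by direct matrix computation, since the statement is an explicit identity in $\U_4(\F_p)$ that can be verified by multiplying out the relevant products. The key observation that makes this tractable is that $\U_4(\F_p)$ is a nilpotent group of class $3$, so iterated commutators simplify dramatically: the outermost commutator $[[B,C],A]$ will land in the center $Z_4(\F_p)$, which consists of matrices differing from the identity only in the $(1,4)$-entry. Thus I expect the answer to depend only on a single scalar, and the claim is that this scalar equals $-1$ regardless of the parameters $a_i,b_i,c_i$.

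First I would compute the inner commutator $[B,C] = B^{-1}C^{-1}BC$. Since $B$ and $C$ each differ from the identity only by a single superdiagonal entry plus second-superdiagonal and corner contributions, the product $B^{-1}C^{-1}BC$ can be organized by tracking how the $(2,3)$-entry of $B$ and the $(3,4)$-entry of $C$ interact. The essential point is that the only superdiagonal entries that fail to commute are the $(2,3)$-entry $1$ coming from $B$ and the $(3,4)$-entry $1$ coming from $C$; their interaction produces a nontrivial $(2,4)$-entry in $[B,C]$, while the $(1,2)$-entries of $B$ and $C$ vanish. I expect $[B,C]$ to be a matrix whose only off-diagonal deviations from the identity sit in the $(2,4)$ and possibly $(1,4)$ positions, with the $(2,4)$-entry equal to $\pm 1$.

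Next I would compute $[[B,C],A] = [B,C]^{-1}A^{-1}[B,C]A$. Here $A$ has a nonzero $(1,2)$-entry equal to $1$, and $[B,C]$ has its key nonzero entry in position $(2,4)$. The commutator of a $(1,2)$-entry with a $(2,4)$-entry produces exactly a $(1,4)$-entry by the structure of the Lie bracket in strictly upper triangular matrices, namely $e_{12}$ and $e_{24}$ bracket to $e_{14}$. This is precisely the mechanism that isolates the corner entry. All of the parameters $a_i,b_i,c_i$ contribute only to entries that are killed once we pass to this outermost bracket, because each such contribution lands in positions that either commute through or get cancelled.

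The main obstacle, though entirely routine, is bookkeeping the inverses and verifying that every parameter-dependent term genuinely cancels, so that the final $(1,4)$-entry is the constant $-1$ rather than something depending on the $a_i,b_i,c_i$. To guard against sign errors I would carry out the second bracket carefully, keeping track of the orientation in $[x,y]=x^{-1}y^{-1}xy$, and confirm that the contribution of the $(1,2)$-entry of $A$ against the $(2,4)$-entry of $[B,C]$ yields exactly $-1$. Once the cancellation of the spurious parameter-dependent terms is confirmed, the stated identity follows immediately.
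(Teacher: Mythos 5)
Your proposal is correct and follows essentially the same route as the paper, which simply writes out $A^{-1},B^{-1},C^{-1}$, computes $[B,C]=I+E_{24}+a_2E_{14}$ (matching your prediction that only the $(2,4)$- and $(1,4)$-entries survive), and then forms the outer commutator. Your structural observations — that the result is central, that the $a_2E_{14}$ term cancels because it is central, and that the $(1,2)$-entry of $A$ against the $(2,4)$-entry of $[B,C]$ produces exactly $-1$ in position $(1,4)$ — are all accurate and amount to the same direct verification.
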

\begin{proof}
A direct computation shows that $A^{-1},B^{-1},C^{-1}$ are
\[
\begin{bmatrix}
1& -1 & -a_1 & c_1-b_1\\
0& 1 & 0 & -c_1\\
0& 0 & 1 & 0\\
0& 0 & 0 & 1
\end{bmatrix}, \quad
\begin{bmatrix}
1& 0 & -a_2 & -b_2\\
0& 1 & -1 & -c_2\\
0& 0 & 1 & 0\\
0& 0 & 0 & 1
\end{bmatrix}, \quad
\begin{bmatrix}
1& 0 & -a_3 & a_3-b_3\\
0& 1 & 0 & -c_3\\
0& 0 & 1 & -1\\
0& 0 & 0 & 1
\end{bmatrix},
\]
respectively.
Therefore
\[
[B,C]= \begin{bmatrix}
1& 0 & 0 & a_2\\
0& 1 & 0 & 1\\
0& 0 & 1 & 0\\
0& 0 & 0 & 1
\end{bmatrix}, \quad
[B,C]^{-1}=\begin{bmatrix}
1& 0 & 0 & -a_2\\
0& 1 & 0 & -1\\
0& 0 & 1 & 0\\
0& 0 & 0 & 1
\end{bmatrix},
\]
and the assertion follows.
\end{proof}

\begin{ex}
\label{first example}
\rm
Let $S$ be a free pro-$p$-group on generators $x_1,\ldots,x_5$.
Let $r=[x_4,x_5][[x_2,x_3],x_1]$ and $\langle r\rangle$ the closed normal subgroup of $S$ generated by $r$.
Note that it is contained in the Frattini subgroup $S_{(2)}$ of $S$.
We show that $G=S/\langle r\rangle$ does not have the vanishing triple Massey product property.
To this end let
\[
A=\begin{bmatrix}
1& 1 & 0 & 0\\
0& 1 & 0 & 0\\
0& 0 & 1 & 0\\
0& 0 & 0 & 1
\end{bmatrix}, \quad
B=\begin{bmatrix}
1& 0 & 0 & 0\\
0& 1 & 1 & 0\\
0& 0 & 1 & 0\\
0& 0 & 0 & 1
\end{bmatrix}, \quad
C=\begin{bmatrix}
1& 0 & 0 & 0\\
0& 1 & 0 & 0\\
0& 0 & 1 & 1\\
0& 0 & 0 & 1
\end{bmatrix}.
\]
Let $\bar A,\bar B,\bar C$ be the images of $A,B,C$, respectively, in $\bar\U_4(\F_p)$.
We define a representation $\bar\rho\colon  S\to\bar\U_4(\F_p)$ by letting
\[
\bar\rho(x_1)=\bar A,\  \bar\rho(x_2)=\bar B,\  \bar\rho(x_3)=\bar C,\
 \bar\rho(x_4)=1,\ \bar\rho(x_5)=1.
\]
By Lemma \ref{lemma - triple commutator of matrices}, $\bar\rho(r)=[[\bar B,\bar C],\bar A]=1$,
so $\bar\rho$ induces a representation $\bar\rho\colon G\to\bar\U_4(\F_p)$.

Now suppose that $\rho\colon S\to\U_4(\F_p)$ is a representation such that $\rho_{i,i+1}=\bar\rho_{i,i+1}$ for $i=1,2,3$.
By Corollary \ref{cor:vanishing-lifting}, we need to show that $\rho(r)\neq1$.
We may write
\[
\rho(x_1)=\begin{bmatrix}
1& 1 & a_1 & b_1\\
0& 1 & 0 & c_1\\
0& 0 & 1 & 0\\
0& 0 & 0 & 1
\end{bmatrix},
\rho(x_2)=\begin{bmatrix}
1& 0 & a_2 & b_2\\
0& 1 & 1 & c_2\\
0& 0 & 1 & 0\\
0& 0 & 0 & 1
\end{bmatrix},
\rho(x_3)=\begin{bmatrix}
1& 0 & a_3 & b_3\\
0& 1 & 0 & c_3\\
0& 0 & 1 & 1\\
0& 0 & 0 & 1
\end{bmatrix},
\]
\[
\rho(x_4)=\begin{bmatrix}
1& 0 & a_4 & b_4\\
0& 1 & 0 & c_4\\
0& 0 & 1 & 0\\
0& 0 & 0 & 1
\end{bmatrix}, \quad
\rho(x_5)=\begin{bmatrix}
1& 0 & a_5 & b_5\\
0& 1 & 0 & c_5\\
0& 0 & 1 & 0\\
0& 0 & 0 & 1
\end{bmatrix}
\]
for some $a_i,b_i,c_i\in\F_p$, $i=1,2,3,4,5$.
We note that  $\rho(x_4)$ and $\rho(x_5)$ commute, so by Lemma \ref{lemma - triple commutator of matrices},
$\rho(r)=[[\rho(x_2),\rho(x_3)],\rho(x_1)]\neq1$, as claimed.
\qed
\end{ex}

\begin{ex}
\label{ex:ob}
Let $G$ be as in the previous example with $p=2$. Then by Theorem~\ref{thm:vanishing} (or more precisely, Theorem~\ref{thm:Massey vanishing}),  $G$ is not realizable as $G_F(2)$ for any field $F$.  For this statement,  using \cite{GLMS} we will give another proof, which avoids Theorem~\ref{thm:vanishing} and Massey product formalism technique. 

Assume that $G=G_F(2)$ for some field $F$. Note that $G$ is not a free prop-2 group, so  $F$ is of characteristic different from 2.  We denote $\sigma_i$ the image of $x_i$ in $G$. Let $\chi_1,\ldots,\chi_5\in H^1(S,\F_2)=H^1(G,\F_2)$ be the characters dual to $x_1,\ldots,x_5$. Let $[a_1],\ldots,[a_5]\in F^*/(F^*)^2$ be the corresponding elements to $\chi_1,\ldots,\chi_5$ via Kummer theory. This means $\chi_1(\sqrt{a_i})=\sqrt{a_i}$ for $i\not=1$ and $\chi_1(\sqrt{a_1})=-\sqrt{a_1}$, etc...

By \cite[Propositions 3.9.12-3.9.13]{NSW}, we have 
\[ a_2\cup a_3 = a_3\cup a_1=0.
\]
Consider a field $L/F$ attached to the triple $a_2,a_3,a_1$  (see \cite[Proposition 4.6]{GLMS}). Let $\sigma_{a_i}$ be constructed as in \cite[Proof of Proposition 4.6]{GLMS} with $a,b,c$ there replaced by $a_3,a_1,a_2$, respectively. Then $[[\sigma_{a_2},\sigma_{a_3}],\sigma_{a_1}]$ is a non-trivial element in $Z({\rm Gal}(L/F))\simeq \Z/2$. 
For each $i$, $\sigma_i$ and $\sigma_{a_i}$ act in the same way on $K=F(\sqrt{a_1},\sqrt{a_2},\sqrt{a_3})$. Therefore  $\sigma_i|_{L/F}=\sigma_{a_i}\gamma_i$, for $\gamma_i\in \Phi(\Gal(L/F))$ (here $\Phi(\Gal(L/F))$ is the Frattini subgroup of $\Gal(L/F)$).

In the proof of the claim below we use basic commutator identities together with basic identities valid in $\Gal(L/F)$.

{\bf Claim:} $[[\sigma_2,\sigma_3],\sigma_1]|_{L/F}=[[\sigma_{a_2},\sigma_{a_3}],\sigma_{a_1}]$.

In fact, 
\[
\begin{aligned}
{[}\sigma_2|_{L/F},\sigma_3|_{L/F}{]} &=[\sigma_{a_2}\gamma_2,\sigma_{a_3}\gamma_3]\\
&=[\sigma_{a_2},\sigma_{a_3}][\sigma_{a_2},\gamma_3][\gamma_2,\sigma_{a_3}]\\
&=[\sigma_{a_2},\sigma_{a_3}]c,
\end{aligned}
\]
where $c:=[\sigma_{a_2},\gamma_3][\gamma_2,\sigma_{a_3}]$ which is in the central of $\Gal(L/F)$. Hence
\[
\begin{aligned}
{[[}\sigma_2,\sigma_3],\sigma_1]|_{L/F}&= [[\sigma_2|_{L/F},\sigma_3|_{L/F}],\sigma_1|_{L/F}]\\
&= [[\sigma_{a_2},\sigma_{a_3}]c,\sigma_{a_1}\gamma_1]\\
&=[[\sigma_{a_2},\sigma_{a_3}],\sigma_{a_1}\gamma_1]\\
&= [[\sigma_{a_2},\sigma_{a_3}],\sigma_{a_1}].
\end{aligned}
\]
Therefore, $[[\sigma_2,\sigma_3],\sigma_1]|_{L/F}$ is a non-trivial element in $Z({\rm Gal}(L/F))\simeq \Z/2$. 

Also observe  that $[\sigma_4,\sigma_5]|_{L/F}$ is trivial because $\sigma_4,\sigma_5$ act trivially on $K=F(\sqrt{a},\sqrt{b},\sqrt{c})$ and the Galois group 
$\Gal(L/K)$ is abelian. Hence we see that our relation $[\sigma_4,\sigma_5][[\sigma_2,\sigma_3],\sigma_1]=1$ restricts nontrivially on $L/F$. Therefore we obtain a contradiction showing that $G\not\simeq G_F(2)$ for any field $F$.
\qed
\end{ex}

\begin{rmk}
 As noted in \cite{CEM, EM2},  one can use \cite[Proposition 9.1]{CEM} (or \cite[Corollary 6.3]{EM2})  to show that various pro-2-groups do not occur as $G_F(2)$ for some field $F$ of characteristic $\not=2$. For the convenience of the reader, we recall this result for pro-2-groups as below.
\begin{prop}[{\cite[Proposition 9.1]{CEM}, \cite[Corollary 6.3]{EM2}}] 
\label{prop:EM2}
Let $G_1, G_2$ be pro-2-groups such that $G_1/(G_1)_{(3)}\simeq G_2/(G_2)_{(3)}$ and $H^*(G_1,\F_2)\not\simeq H^*(G_2,\F_2)$. Then at most one of $G_1,G_2$ can be isomorphic to the maximal pro-2 Galois group $G_F(2)$ of a field $F$ of characteristic $\not=2$.
\end{prop}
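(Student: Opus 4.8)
The plan is to argue by contradiction: suppose that both $G_1\cong G_{F_1}(2)$ and $G_2\cong G_{F_2}(2)$ for fields $F_1,F_2$ of characteristic $\neq 2$, and deduce that $H^*(G_1,\F_2)\cong H^*(G_2,\F_2)$, contradicting the hypothesis. This forces ``at most one'' to be Galois, which is exactly the assertion. The whole argument rests on two inputs: first, that for a group of the shape $G_F(2)$ the mod-$2$ cohomology ring is a \emph{quadratic} algebra, so it is determined by its part in degrees $\le 2$; and second, that this low-degree part is already recorded by the single quotient $G/G_{(3)}$.

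For the first input I would invoke the Bloch--Kato conjecture, proved by Rost and Voevodsky \cite{Voe} (in degree $2$ this is Merkurjev--Suslin \cite{MSu}). For $\Char F\neq 2$ the norm residue map gives a graded-ring isomorphism $H^*(G_F(2),\F_2)\cong K^M_*(F)/2$, and the target is by definition the tensor algebra on $H^1=F^*/(F^*)^2$ modulo the ideal generated by the Steinberg relations, which lie in degree $2$. Hence $H^*(G_F(2),\F_2)$ is the quadratic $\F_2$-algebra attached to the cup-product map $H^1\otimes H^1\to H^2$; degree-$2$ Bloch--Kato guarantees that $H^2$ is spanned by products, so this map is surjective, and its kernel is the space of quadratic relations. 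Consequently the entire graded ring is reconstructed from the triple consisting of $H^1$, $H^2$, and the cup product, where the diagonal values record the Bockstein $\beta$ (for $p=2$ one has $\beta(\chi)=\chi\cup\chi=\chi\cup\chi_{-1}$).

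For the second input I would use the results of Min\'a\v{c}--Spira \cite{MS2} and Efrat--Min\'a\v{c} \cite{EM1,EM2}: for a pro-$2$-group $G$ the quotient $G/G_{(3)}$ determines canonically and functorially the space $H^1(G,\F_2)=\Hom(G/G_{(2)},\F_2)$, the Bockstein $H^1\to H^2$, and the cup-product pairing $H^1\times H^1\to H^2$. The mechanism is that the relevant degree-$2$ classes are detected by homomorphisms of $G$ into the small $2$-groups $\U_3(\F_2)$ and $\Z/4$, both of which have trivial third Zassenhaus term (compare Lemma~\ref{lem:trivial on Zassenhauss subgroup}), so that such homomorphisms, and thus the values of cup products and Bocksteins, factor through $G/G_{(3)}$. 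An isomorphism $G_1/(G_1)_{(3)}\cong G_2/(G_2)_{(3)}$ transports this data from one side to the other, so the two cup-product triples are isomorphic; combined with the first input this yields $H^*(G_1,\F_2)\cong H^*(G_2,\F_2)$, the desired contradiction.

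I expect the genuine obstacle to be the second input, namely verifying that $G/G_{(3)}$ pins down not merely $H^1$ but the full cup-product and Bockstein maps into $H^2$. The delicate point is that an arbitrary pro-$2$-group can have $H^2$ far larger than its decomposable part, so $G/G_{(3)}$ need not determine $H^*(G)$ in general; the argument genuinely uses that $G$ is a Galois group, so that Merkurjev--Suslin forces $H^2$ to be spanned by cup products and nothing in $H^2$ escapes the quotient $G/G_{(3)}$. Establishing the functoriality and the identification of $H^2$ with the decomposable classes precisely, rather than the routine bookkeeping with the matrix groups $\U_3(\F_2)$ and $\Z/4$, is where the real work lies.
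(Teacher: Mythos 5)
First, a point of reference: the paper does not prove this proposition at all — it is quoted verbatim from \cite[Proposition 9.1]{CEM} and \cite[Corollary 6.3]{EM2} "for the convenience of the reader," so the only meaningful comparison is with the arguments in those references. Your overall architecture — quadraticity of $H^*(G_F(2),\F_2)$ via the norm residue isomorphism theorem \cite{Voe, MSu}, plus the claim that the single quotient $G/G_{(3)}$ already records the degree $\le 2$ data — is exactly the strategy of \cite{CEM}, so the plan is the right one.

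Nevertheless there is a genuine gap at the step you yourself flag, and it is not closable by "routine bookkeeping": detecting classes by homomorphisms into $\U_3(\F_2)$ and $\Z/4$ only tells you, for individual $a,b\in H^1$, whether $a\cup b$ or $\beta(a)$ vanishes. To reconstruct $H^2(G,\F_2)$ as the quadratic part $(H^1\otimes H^1)/\ker(\cup)$ — which is legitimate by Merkurjev--Suslin, since $\cup$ is surjective for Galois groups — you must know the kernel of the full \emph{linear} map $\cup\colon H^1\otimes H^1\to H^2(G)$, i.e., which arbitrary sums $\sum_i a_i\cup b_i$ vanish, and you must show this kernel is computable from $G/G_{(3)}$. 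Concretely, what is needed is that under the identification $H^1(G,\F_2)\cong H^1(G/G_{(3)},\F_2)$ one has $\ker(\cup_{G})=\ker(\cup_{G/G_{(3)}})$; equivalently, that the inflation map $H^2(G/G_{(3)},\F_2)\to H^2(G,\F_2)$ is injective on the subspace spanned by decomposable classes. One inclusion is mere functoriality of inflation, but the converse amounts (via the five-term exact sequence) to showing that the transgression image from $H^1(G_{(3)},\F_2)^{G}$ meets the decomposable part of $H^2(G/G_{(3)},\F_2)$ trivially, and this is precisely the main theorem of \cite{CEM} (building on \cite{MS2,EM1,EM2}); it uses the Galois-theoretic structure of $G/G_{(3)}$ in an essential way and fails for general pro-$2$-groups. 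Without this input your argument does not close; with it, it coincides with the proof in the cited sources.
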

To show that a pro-2-group $G_1$ cannot be isomorphic to $G_F(2)$ for a field $F$ of characteristic $\not=2$, we choose a group $G_2$ such that two conditions in the above Corollary are satisfied and $G_2$ does occur as $G_L(2)$ for some field $L$ of characteristic $\not=2$, and we are done.

Now we consider the pro-2-group $G=:G_1$ defined as in the previous example, i.e., $G$ is the quotient of the free pro-2 group $S$ on generators $x_1,\ldots,x_5$ by the relation $r=[x_4,x_5][[x_2,x_3],x_1]$. Then one might wonder whether we can use Proposition~\ref{prop:EM2} 
to show that $G=G_1$ is not realizable as $G_F(2)$ for some field $F$ of characteristic $\not=2$. One very natural candidate for the group $G_2$ is the following: $G_2$ is the quotient of the free pro-2 group $S$ by the relation $r_2=[x_4,x_5]$. Then $G_1/(G_1)_{(3)}\simeq G_2/(G_2)_{(3)}$ and $G_2$ is the free product of the free pro-2 group on 3 generators $x_1,x_2,x_3$ with the group $\Z_2\times \Z_2$. And it is known that, see \cite[Theorem 3.6]{JW}, $G_2$ is isomorphic to $G_F(2)$ for some field $F$ of characteristic $\not=2$. However, $H^*(G_1,\F_2)\simeq H^*(G_2,\F_2)$. In fact, let 
\[
\begin{aligned}
H^1(G_1,\F_2)&=U_1\oplus V_1,\\
H^1(G_2,\F_2)&=U_2\oplus V_2,
\end{aligned}
\]
where for each $i=1,2$, $U_i$ is  spanned by the images of $\chi_1,\chi_2,\chi_3,\chi_4$ in $H^1(G_i,\F_2)$ and $V_i$ is spanned by the image of $\chi_5$ in $H^1(G_i,\F_2)$. Then using  usual transgression-relation pairing we see that:
\begin{enumerate}
\item The cup product $U_i\otimes U_i\to H^2(G_i,\F_2)$ is trivial.
\item The cup product $U_i\otimes V_i\to H^2(G_i,\F_2)$ is surjective. Because $\chi_4\cup\chi_5\not=0$ and $\dim H^2(G_i,\F_2)=1$.
\end{enumerate}
Hence $G_i$ are mild groups (see for example \cite{Fo,Ga,LM}). In particular, $\cd G_1=\cd G_2=2$ and $H^*(G_1,\F_2)=H^*(G_2,\F_2)$. Therefore we cannot easily apply Proposition
\ref{prop:EM2} to this example.

Our discussion above shows that our techniques provide genuinely new cases of pro-2-groups which cannot occur as $G_F(2)$ over some field $F$. 
Theorems~\ref{thm:ob1}  and~\ref{thm:ob2}  below exhibit  large families of pro-2-groups which are not of the form $G_F(2)$.

It is easy to provide examples as above with more relations. For example if $G=S/R$, where $S$ is the free pro-2-group on generators $x_1,x_2,\ldots,x_7$ and $R$ is its normal subgroup generated by 
$r_1=[x_4,x_5][[x_2,x_3],x_1] $
and $r_2=[x_6,x_7]$. Then the proof above for showing that $G\not\simeq G_F(2)$ for any field $F$ is valid word-for-word with the very exception that we choose in our possible example of $F$ an $\F_2$-basis $[a_1],\ldots,[a_7]$ orthogonal to $\chi_1,\ldots,\chi_7$ instead of the original basis $[a_1],\ldots,[a_5]$. 
\qed
\end{rmk}

Let $G$ be a pro-$p$-group. 
Let
\[ 1 \to R \to  S \to  G \to 1,\]
be a minimal presentation of $G$, i.e., $S$ a free pro-$p$-group and $R\subset S_{(2)}$. Then the inflation map
\[ \inf :H^1(G, \F_p) \to  H^1(S, \F_p)\]
is an isomorphism by which we identify both groups. Since $S$ is free, we have $H^2(S,\F_p) = 0$ and from the  5-term exact sequence we obtain the transgression map 
\[ \trg : H^1(R,\F_p)^ G \to H^2(G,\F_p)\]
is an isomorphism. 
Therefore any element $r\in R$ gives rise to a map
\[ \tr_r: H^2(G,\F_p) \to \F_p,\]
which is defined by $\alpha\mapsto \trg^{-1}(\alpha)(r)$ and is called the {\it trace map} with respect to $r$.

Let $(x_i)_{i\in I}$ be a basis of $S$, where $I$ is a well-ordered set. Let $\chi_i,i \in I$ be the dual basis to $x_i, i\in I$ of $H^1(S,\F_p)=H^1(G,\F_p)$, i.e., $\chi_i(x_j)=\delta_{ij}$.

Let $r$ be any element in $S_{(2)}$. Then $r$  may be uniquely written as
\begin{equation}
\label{decomposition modulo S4}
\tag{*}
r=
\begin{cases}
\displaystyle \prod_{i\in I} x_i^{2a_i} \prod_{ i<j}[x_i,x_j]^{b_{ij}}\prod_{ i< j, k\leq j}[[x_i,x_j],x_k]]^{c_{ijk}}\cdot r', \text{ if } p=2,\\
\displaystyle  \prod_{i<j}[x_i,x_j]^{b_{ij}}\prod_{i\in I} x_i^{3a_i}  \prod_{i< j, k\leq j}[[x_i,x_j],x_k]]^{c_{ijk}}\cdot r', \text{ if } p=3,\\
\displaystyle  \prod_{i<j}[x_i,x_j]^{b_{ij}}\prod_{ i< j, k\leq j}[[x_i,x_j],x_k]]^{c_{ijk}}\cdot r', \text { if } p\not=2,3,
\end{cases}
 \end{equation}
where $a_i,b_{ij},c_{ijk}\in \{0,1,\ldots,p-1\}$ and $r'\in S_{(4)}$ \cite[Prop.\ 1.3.2 and Prop.\ 1.3.3]{Vo}. 
For convenience we call (\ref{decomposition modulo S4}) the canonical decomposition modulo $S_{(4)}$ of $r$ (with respect to the basis $(x_i)$) and we also set $u_{ij}=b_{ij}$ if $i<j$, and  $u_{ij}=b_{ji}$ if $j<i$.

\begin{lem} 
\label{lem:coef}
Let the notation  be as above. Assume that $R=\langle r \rangle$ and that the triple Massey product $\langle -\chi_k,-\chi_i,-\chi_j\rangle$ is defined for some distinct $i,j,k$ with $i<j$ and $k<j$. Then there exists an $\alpha\in \langle -\chi_k,-\chi_i,-\chi_j\rangle$, which can be given explicitly,  such that
\[
\tr_r(\alpha)=c_{ijk}.
\]
\end{lem}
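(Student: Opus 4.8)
The plan is to turn the statement into a single matrix-entry computation via the transgression formula of Lemma~\ref{lem:Sharifi}, after fixing one explicit defining system. First I would take the convenient representation $\bar\rho\colon S\to\bar\U_4(\F_p)$ sending $x_k,x_i,x_j$ to the three standard transvections whose only nonzero off-diagonal entries sit in positions $(1,2),(2,3),(3,4)$ respectively (these are the matrices $A,B,C$ of Lemma~\ref{lemma - triple commutator of matrices} with all $a_\bullet,b_\bullet,c_\bullet=0$), and every other generator to the identity. Then $\bar\rho_{12}=\chi_k$, $\bar\rho_{23}=\chi_i$, $\bar\rho_{34}=\chi_j$. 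Let $\rho\colon S\to\U_4(\F_p)$ be the homomorphism given by the same transvections with all remaining entries $0$; this is a lift of $\bar\rho$ because $S$ is free. Granting that $\bar\rho$ descends to $G=S/\langle r\rangle$ (checked next), Lemma~\ref{lem:Sharifi} with $n=3$ produces the explicit class $\alpha:=\langle -\chi_k,-\chi_i,-\chi_j\rangle_{\bar\rho}=\trg(\Lambda(\rho))$, and since $\trg$ is an isomorphism,
\[
\tr_r(\alpha)=\trg^{-1}(\alpha)(r)=\Lambda(\rho)(r)=-\rho_{14}(r).
\]
So everything reduces to reading off the $(1,4)$-entry of $\rho(r)$.

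Second, I would verify that $\bar\rho$ factors through $G$, i.e. that $\bar\rho(r)=1$. As $r\in S_{(2)}$ we have $\rho(r)\in\U_4(\F_p)_{(2)}$, so its three superdiagonal entries already vanish; thus $\bar\rho(r)=1$ is equivalent to $\rho(r)_{13}=\rho(r)_{24}=0$. Applying the $n=2$ case of Lemma~\ref{lem:Sharifi} to the top-left and bottom-right $\U_3$-blocks of $\rho$ identifies $\rho(r)_{13}=-\tr_r(\chi_k\cup\chi_i)$ and $\rho(r)_{24}=-\tr_r(\chi_i\cup\chi_j)$. Because the triple Massey product is defined we have $\chi_k\cup\chi_i=\chi_i\cup\chi_j=0$ in $H^2(G,\F_p)$, so both entries vanish and $\bar\rho$ indeed descends, legitimizing the reduction above. (Equivalently, this records that $u_{ki}=u_{ij}=0$ in the decomposition.)

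Third, I would compute $-\rho_{14}(r)$ on the canonical decomposition \eqref{decomposition modulo S4}. The structural point is that $\rho(S_{(2)})\subseteq\U_4(\F_p)_{(2)}$, the subgroup of matrices with vanishing superdiagonal, which is abelian for every $p$, and on it the map $g\mapsto g_{14}$ is a homomorphism; since $\rho$ kills $S_{(4)}$ by Lemma~\ref{lem:trivial on Zassenhauss subgroup}, $\rho_{14}(r)$ is just the sum of the contributions of the separate factors. For the chosen $\rho$, the power terms $x_m^{pa_m}$ (when present) and all commutators $[x_m,x_n]$ contribute $0$ to the $(1,4)$-slot — they land in positions $(1,3)$ or $(2,4)$, or vanish entirely when a factor is the identity — and $r'\in S_{(4)}$ maps to $1$. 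A triple commutator $[[x_m,x_n],x_l]$ can reach position $(1,4)$ only if $\{m,n,l\}=\{i,j,k\}$; by Lemma~\ref{lemma - triple commutator of matrices} the factor $[[x_i,x_j],x_k]$ contributes exactly $-c_{ijk}$, while the only other admissible ordering $[[x_k,x_j],x_i]$ contributes $0$ because the transvections in positions $(1,2)$ and $(3,4)$ commute. Hence $\rho_{14}(r)=-c_{ijk}$ and $\tr_r(\alpha)=c_{ijk}$.

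The main obstacle is exactly this last bookkeeping: one must rule out any hidden $(1,4)$-contribution leaking in from the quadratic (power and commutator) part of $r$ or from the ``wrong'' triple commutator. The homomorphism property of $g\mapsto g_{14}$ on $\U_4(\F_p)_{(2)}$ is what decouples the factors and makes the count manageable, and the vanishing of $[[x_k,x_j],x_i]$ (via the commuting of the $(1,2)$- and $(3,4)$-transvections), together with $u_{ki}=u_{ij}=0$ securing the descent in the second step, is what isolates the single coefficient $c_{ijk}$.
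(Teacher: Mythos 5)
Your proposal is correct and follows essentially the same route as the paper: the same transvection representation $x_k\mapsto A$, $x_i\mapsto B$, $x_j\mapsto C$ (all other generators to $1$), descent to $G$ via $\chi_k\cup\chi_i=\chi_i\cup\chi_j=0$ (equivalently $u_{ki}=u_{ij}=0$), Lemma~\ref{lem:Sharifi} to identify $\tr_r(\alpha)=-\rho_{14}(r)$, and the matrix computation isolating $[[B,C],A]^{c_{ijk}}$. Your factor-by-factor bookkeeping via the additivity of $g\mapsto g_{14}$ on $\U_4(\F_p)_{(2)}$ is just a slightly more explicit organization of the paper's direct evaluation of $\rho(r)$.
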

\begin{proof} Since $\langle -\chi_k,-\chi_i,-\chi_j\rangle$ is defined, $\chi_k\cup \chi_i=\chi_i\cup\chi_j=0$. Hence by \cite[Proposition 1.3.2]{Vo} (see also \cite[Proposition 3.9.13]{NSW}), we have $u_{ki}=u_{ij}=0$. 

Let
\[
A=\begin{bmatrix}
1& 1 & 0 & 0\\
0& 1 & 0 & 0\\
0& 0 & 1 & 0\\
0& 0 & 0 & 1
\end{bmatrix}, \quad
B=\begin{bmatrix}
1& 0 & 0 & 0\\
0& 1 & 1 & 0\\
0& 0 & 1 & 0\\
0& 0 & 0 & 1
\end{bmatrix}, \quad
C=\begin{bmatrix}
1& 0 & 0 & 0\\
0& 1 & 0 & 0\\
0& 0 & 1 & 1\\
0& 0 & 0 & 1
\end{bmatrix}.
\]
Then $A^p=B^p=C^p=[A,C]=1$.

We define a representation $\rho \colon S\to \U_4(\F_p)$ by letting
\[
x_k\mapsto  A, \ x_i\mapsto   B, \ x_j\mapsto  C,\
 x_l\mapsto 1,\quad \forall l\not=i,j,k.
\]
Then 
\[
\rho(r) = [A,C]^{u_{kj}}[[B,C],A]^{c_{ijk}}\cdot [[A,C],B]^{c_{kji}}= [[B,C],A]^{c_{ijk}}.
\]
Hence $\rho(r)=1$ in $\bar\U_4(\F_p)$. Thus  $\rho$ induces a group homomorphism $\bar\rho: G\to \bar \U_4(\F_p)$. Then $\rho$ is a lift of $\bar\rho$ in the sense discussed before Lemma~\ref{lem:Sharifi}. By checking on the generators we see that 
\[
\bar\rho_{12}=\chi_k, \bar\rho_{23}=\chi_i,\bar\rho_{34}=\chi_j.
\]

Let $\alpha\in \langle -\chi_k,-\chi_i,-\chi_j\rangle$ be the Massey product value relative to the defining system corresponding to $\bar\rho$ and let $f\in H^1(R,\F_p)^G$ be defined by 
\[
 f(\tau)= -\rho_{14}(\tau) \quad \text{ for } \tau\in R.
\]
By Lemma~\ref{lem:Sharifi}, we have $\trg(f)=\alpha$. Hence 
\[
\tr_r(\alpha)= f(r)=-\rho_{14}(r)=c_{ijk},
\]
as desired.
\end{proof}

\begin{prop}
\label{prop:coef1}
In (\ref{decomposition modulo S4}) suppose that there exist distinct $i,j,k$ such that $i<j,k<j$,
$u_{ij}=u_{kj}=u_{ki}=u_{kl}=u_{jl}=0$  for all $l\neq i,j,k$. If $p=2$ we assume further that $a_k=a_j=0$.
Let $G=S/\langle r\rangle$ and $\chi_1,\ldots,\chi_n$ be the $\F_p$-basis of $H^1(S,\F_p)= H^1(G,\F_p)$ dual to $x_1,\ldots, x_n$. Then
$\langle -\chi_k,-\chi_i,-\chi_j\rangle$ is uniquely defined and 
\[ \tr_r(\langle -\chi_k,-\chi_i,-\chi_j\rangle)= c_{ijk}.
\]
In particular, if we assume further that $c_{ijk}\not=0$ then $\langle -\chi_k,-\chi_i,-\chi_j\rangle$ does not vanish.
\end{prop}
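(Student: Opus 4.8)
The plan is to reduce everything to the transgression--relation pairing together with Lemma~\ref{lem:coef} and the indeterminacy formula of Remark~\ref{rmk:indet}. Since $R=\langle r\rangle$ is a single relation, the transgression $\trg\colon H^1(R,\F_p)^G\to H^2(G,\F_p)$ is an isomorphism and hence $\tr_r\colon H^2(G,\F_p)\to\F_p$ is the induced isomorphism. The basic input I will use repeatedly is the pairing computing cup products against $r$: for $s\neq t$ one has $\tr_r(\chi_s\cup\chi_t)=u_{st}$, and for $p=2$ one has $\tr_r(\chi_s\cup\chi_s)=a_s$ (this is \cite[Proposition 1.3.2]{Vo}, \cite[Proposition 3.9.13]{NSW}, already invoked in the proof of Lemma~\ref{lem:coef}). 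For $p$ odd, graded commutativity forces $\chi_s\cup\chi_s=0$ automatically.

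First I would check that the product is defined. The hypotheses $u_{ki}=u_{ij}=0$ give, through the pairing and the injectivity of $\tr_r$, that $\chi_k\cup\chi_i=\chi_i\cup\chi_j=0$, equivalently $(-\chi_k)\cup(-\chi_i)=(-\chi_i)\cup(-\chi_j)=0$, so $\langle-\chi_k,-\chi_i,-\chi_j\rangle$ is defined.

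Next comes the heart of the argument, uniqueness. By Remark~\ref{rmk:indet}, the product $\langle-\chi_k,-\chi_i,-\chi_j\rangle$ is uniquely defined precisely when $\chi_k\cup H^1=0$ and $H^1\cup\chi_j=0$. Expanding an arbitrary class in the dual basis $\chi_1,\ldots,\chi_n$, it suffices to show $\chi_k\cup\chi_l=0$ and $\chi_l\cup\chi_j=0$ for every $l$. For $l\neq k$ the first vanishes because $\tr_r(\chi_k\cup\chi_l)=u_{kl}$, and the hypotheses cover exactly the cases $l\in\{i,j\}$ (via $u_{ki}=u_{kj}=0$) and $l\notin\{i,j,k\}$ (via $u_{kl}=0$); the remaining case $l=k$ is handled by graded commutativity when $p$ is odd and by the extra assumption $a_k=0$, so that $\tr_r(\chi_k\cup\chi_k)=a_k=0$, when $p=2$. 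The same reasoning, using $u_{ij}=u_{kj}=0$, $u_{jl}=0$ for $l\neq i,j,k$, and $a_j=0$ for $p=2$, gives $H^1\cup\chi_j=0$. Hence the indeterminacy vanishes and the product is a singleton.

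Finally, since the product consists of a single class, the explicit $\alpha$ produced in Lemma~\ref{lem:coef} is that class, so $\tr_r(\langle-\chi_k,-\chi_i,-\chi_j\rangle)=\tr_r(\alpha)=c_{ijk}$. If moreover $c_{ijk}\neq0$, then this value is nonzero, and as $\tr_r$ is an isomorphism the unique class is nonzero in $H^2(G,\F_p)$; therefore $0\notin\langle-\chi_k,-\chi_i,-\chi_j\rangle$, i.e.\ the triple Massey product does not vanish. I expect the uniqueness step to be the main obstacle: one must exploit the entire list of vanishing hypotheses on the $u$'s, and the role of the auxiliary conditions $a_k=a_j=0$ is precisely to annihilate the self cup products $\chi_k\cup\chi_k$ and $\chi_j\cup\chi_j$, which are the only pieces of the indeterminacy not automatically killed by graded commutativity when $p=2$.
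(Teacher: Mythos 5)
Your proposal is correct and follows essentially the same route as the paper: use the transgression--relation pairing $\tr_r(\chi_s\cup\chi_t)=\pm u_{st}$ (and $\tr_r(\chi_s\cup\chi_s)=a_s$ for $p=2$) together with the hypotheses to get $\chi_k\cup H^1=H^1\cup\chi_j=0$, invoke Remark~\ref{rmk:indet} for unique definedness, and then read off the value from Lemma~\ref{lem:coef}. The only difference is that you spell out the basis expansion, the injectivity of $\tr_r$, and the role of graded commutativity for odd $p$, all of which the paper leaves implicit.
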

\begin{proof}
 By \cite[Proposition 1.3.2]{Vo} (see also \cite[Proposition 3.9.13]{NSW}) and by assumption, we have
 \[ 
{\rm tr}_r( \chi_k\cup \chi_l)= 
 \begin{cases}
 \pm u_{kl} =0 &\text{ if } l\not =k,\\
 0 &\text{ if } l =k, p\not =2,\\
a_k =0 &\text{ if } l=k, p=2.
 \end{cases}
 \]

Hence $\chi_k\cup \chi_l=0$ for all $l=1,\ldots,n$. Thus $\chi_k\cup H^1(G,\F_p)=0$. Similarly $H^1(G,\F_p)\cup \chi_j=0$. Therefore $\langle -\chi_k,-\chi_i,-\chi_j\rangle$ is uniquely defined. By Lemma~\ref{lem:coef}, 
\[
\tr_r(\langle -\chi_k,-\chi_i,-\chi_j\rangle) = c_{ijk},
\]
as desired.
\end{proof}

The following theorem generalizes Example \ref{first example}.

\begin{thm}
\label{thm:ob1}
Let $\sR$ be a set of elements in $S_{(2)}$. Assume that there exists an element $r$ in $\sR$ and distinct indices $i,j,k$ with $i<j,k<j$ such that: 
\begin{enumerate}
\item  In (\ref{decomposition modulo S4})  the canonical decomposition modulo $S_{(4)}$ of $r$,  
$u_{ij}=u_{kj}=u_{ki}=u_{kl}=u_{jl}=0$  for all $l\neq i,j,k$, and  $c_{ijk}\not=0$ and if $p=2$ we assume further that $a_k=a_j=0$, and
\item  for every $s\in \sR$ which is different from  $r$,  the factors $[x_k,x_i]$, $[x_i,x_k]$ and $[x_i,x_j]$ do not occur in the canonical decomposition modulo $S_{(4)}$ of $s$. 
\end{enumerate}
Then $G=S/\langle \sR \rangle$ does not have the vanishing triple Massey product property.
\end{thm}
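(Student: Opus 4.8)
The plan is to produce, via Corollary~\ref{cor:vanishing-lifting}, a single homomorphism $\bar\rho\colon G\to\bar\U_4(\F_p)$ that admits no lift to $\U_4(\F_p)$ along the superdiagonal; equivalently, by Theorem~\ref{thm:Dwyer}, a defining system on $G$ for a triple Massey product whose value does not contain $0$. I would build $\bar\rho$ from the matrices $A,B,C$ of Lemma~\ref{lem:coef}: define $\rho\colon S\to\U_4(\F_p)$ by $x_k\mapsto A$, $x_i\mapsto B$, $x_j\mapsto C$ and $x_l\mapsto 1$ for every other $l$, and let $\bar\rho$ be the composite with the projection $\U_4(\F_p)\twoheadrightarrow\bar\U_4(\F_p)$. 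With this choice $\bar\rho_{12}=\chi_k$, $\bar\rho_{23}=\chi_i$, $\bar\rho_{34}=\chi_j$, so $\bar\rho$ corresponds to a defining system for $\langle -\chi_k,-\chi_i,-\chi_j\rangle$.

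The key step, which I expect to be the main obstacle, is to verify that $\bar\rho$ descends to $G=S/\langle\sR\rangle$, i.e. that $\bar\rho(s)=1$ in $\bar\U_4(\F_p)$ for every $s\in\sR$. For $s=r$ this is precisely the computation in Lemma~\ref{lem:coef}: condition~(i) forces $\rho(r)=[[B,C],A]^{c_{ijk}}$, which lies in the centre $Z_4(\F_p)$, so $\bar\rho(r)=1$. For $s\neq r$ I would argue entrywise. Since $s\in S_{(2)}$, the characters $\chi_i,\chi_j,\chi_k$ vanish on $s$, so the superdiagonal entries of $\bar\rho(s)$ are $0$. The only generators on which $\rho$ is nontrivial are $x_i,x_j,x_k$, and among their pairwise images one has $[A,C]=1$; hence the two second-diagonal entries $(1,3)$ and $(2,4)$ of $\bar\rho(s)$ are governed exactly by the coefficients of the factors $[x_k,x_i]$ (or $[x_i,x_k]$) and $[x_i,x_j]$ in the canonical decomposition of $s$. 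These vanish by condition~(ii), while all triple commutators and the tail $r'\in S_{(4)}$ map into the centre and die in $\bar\U_4(\F_p)$ (using Lemma~\ref{lem:trivial on Zassenhauss subgroup} for the $S_{(4)}$-part). Thus $\bar\rho(s)=1$, and $\bar\rho$ factors through $G$. This is where both hypotheses are essential: condition~(ii) prevents the extra relations from obstructing $\bar\rho$, and the harmlessness of $[x_k,x_j]$ (via $[A,C]=1$) explains why no condition is placed on it.

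Once $\bar\rho\colon G\to\bar\U_4(\F_p)$ is in hand, I would finish with the trace-map argument of Proposition~\ref{prop:coef1}. Let $\alpha\in H^2(G,\F_p)$ be the value of $\langle -\chi_k,-\chi_i,-\chi_j\rangle$ relative to the defining system $\bar\rho$. Since $\rho\colon S\to\U_4(\F_p)$ is a lift of $\bar\rho$ in the sense of Lemma~\ref{lem:Sharifi}, that lemma gives $\tr_r(\alpha)=-\rho_{14}(r)=c_{ijk}\neq0$, using $\rho(r)=[[B,C],A]^{c_{ijk}}$ and the $(1,4)$-entry $-1$ of $[[B,C],A]$. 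It then remains to show that $\tr_r$ annihilates the indeterminacy $\chi_k\cup H^1+H^1\cup\chi_j$ of Remark~\ref{rmk:indet}. This is exactly the computation in the proof of Proposition~\ref{prop:coef1}: condition~(i) together with the cup-product/relation pairing \cite[Prop.\ 1.3.2]{Vo}, \cite[Prop.\ 3.9.13]{NSW} yields $\tr_r(\chi_k\cup\chi_l)=\tr_r(\chi_l\cup\chi_j)=0$ for all $l$, the self-cup terms using $a_k=a_j=0$ when $p=2$. Hence $\tr_r$ is constantly $c_{ijk}\neq0$ on the whole Massey product, so $0\notin\langle -\chi_k,-\chi_i,-\chi_j\rangle$ and $G$ fails the vanishing triple Massey product property. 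The one subtlety beyond the single-relation case is that $H^2(G,\F_p)$ is now larger, so this indeterminacy need not vanish; condition~(i) is precisely what ensures $\tr_r$ still kills it.
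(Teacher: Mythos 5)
Your proposal is correct, and its computational core coincides with the paper's: the same matrices $A,B,C$ from Lemma~\ref{lem:coef}, the same evaluation $\tr_r(\alpha)=-\rho_{14}(r)=c_{ijk}$ via Lemma~\ref{lem:Sharifi}, and the same use of the cup-product/relation pairing to control the indeterminacy. Where you genuinely diverge is in how the multi-relator situation is handled. The paper never descends $\bar\rho$ to $G$: it first checks $\chi_k\cup\chi_i=\chi_i\cup\chi_j=0$ in $H^2(G,\F_p)$ by pairing against every $s\in\sR$, and then invokes the naturality of Massey products along the surjection $f\colon G'=S/\langle r\rangle\to G=S/\langle\sR\rangle$ to transport everything to the one-relator quotient $G'$, where Proposition~\ref{prop:coef1} applies and the indeterminacy is literally zero (the Massey product there is uniquely defined, since $H^2(G',\F_p)$ is detected by $\tr_r$ alone). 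You instead push the representation down to $G$ itself; this costs you the verification that $\bar\rho(s)=1$ for every $s\in\sR$ --- your entrywise analysis of the $(1,3)$ and $(2,4)$ slots is right, and is in substance the same computation the paper performs when it shows the two cup products vanish over $G$ --- and it obliges you to note, as you correctly do, that over $G$ the indeterminacy $\chi_k\cup H^1+H^1\cup\chi_j$ need not vanish in $H^2(G,\F_p)$ but is still annihilated by $\tr_r$ thanks to condition (i). What your route buys is the avoidance of the naturality property of Massey products under inflation as an external input, at the price of a slightly heavier direct computation on $G$; what the paper's route buys is modularity, with Proposition~\ref{prop:coef1} carrying all the content and Theorem~\ref{thm:ob1} reduced to a formal pullback. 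Both arguments are complete.
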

\begin{proof} Let $G^\prime=G/\langle r \rangle$ and let $f$ be the canonical map $f:G^\prime=S/\langle r\rangle \to G= S/\langle \sR \rangle$. We shall identify three groups $H^1(S,\F_p)$, $H^1(G,\F_p)$ and $H^1(G^\prime,\F_p)$ via  inflation maps. We also use subscript $\langle \cdot,\cdot,\cdot\rangle_G $ (respectively, $\langle \cdot,\cdot,\cdot\rangle_{G^\prime}$) to denote Massey products in the cohomology groups of $G$ (respectively,  $G^\prime$).

By \cite[Proposition 1.3.2]{Vo} (see also \cite[Proposition 3.9.13]{NSW}) and by assumption, we have 
\[
\tr_{s}(\chi_k\cup \chi_i)=\tr_{s}(\chi_i\cup\chi_j)=0, \;\text{ for all } s\in \sR.
\]
Hence $\chi_k\cup \chi_i=\chi_i\cup\chi_j=0$ and the triple Massey products $\langle -\chi_k,-\chi_i,-\chi_j\rangle_G$ is defined. 

By the naturality property of Massey products (see e.g. \cite[page 433]{Kra}, \cite[Property 2.1.2]{Mor}), one has 
\[ 
f^*(\langle -\chi_k,-\chi_i,-\chi_j\rangle_{G}) \subseteq  \langle -\chi_k,-\chi_i,-\chi_j\rangle_{G^\prime}. 
\]
 By Proposition~ \ref{prop:coef1} applying to the group $G^\prime$,  $\langle -\chi_k,-\chi_i,-\chi_j\rangle_{G^\prime}$ does not vanish. Therefore  $\langle -\chi_k,-\chi_i,-\chi_j\rangle_{G}$ does not vanish, and we are done. 
\end{proof}
\begin{lem}
\label{lem:ob2}
Let 
$u:=\begin{bmatrix}
1& 0 & 0 & 0\\
0& 1 & 1 & 0\\
0& 0 & 1 & 1\\
0& 0 & 0 & 1
\end{bmatrix}, \; 
 v:=\begin{bmatrix}
1& 1 & 0 & 0\\
0& 1 & 0 & 0\\
0& 0 & 1 & 0\\
0& 0 & 0 & 1
\end{bmatrix},
$
be two matrices in $\U_4(\F_p)$.
Then $[[u,v],u]= \begin{bmatrix}
1& 0 & 0 & -1\\
0& 1 & 0 & 0\\
0& 0 & 1 & 0\\
0& 0 & 0 & 1
\end{bmatrix}$,$[[u,v],v]=1$ and  $v^p=1$. Furthermore, if $p\geq 3$ then $u^p=1$.
\end{lem}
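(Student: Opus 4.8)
The plan is to reduce everything to manipulations with the elementary matrices $E_{ij}$ (the $4\times4$ matrix with a single $1$ in position $(i,j)$ and zeros elsewhere), using the single multiplication rule $E_{ij}E_{kl}=\delta_{jk}E_{il}$. First I would record the two generators as $u=I+N$ with $N:=E_{23}+E_{34}$, and $v=I+E_{12}$, and collect the nilpotency facts I will reuse: $N^2=E_{24}$, $N^3=0$, and $E_{12}^2=0$. These immediately give the inverses $u^{-1}=I-N+N^2=I-E_{23}-E_{34}+E_{24}$ and $v^{-1}=I-E_{12}$, which is all the inversion I will need.

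I would dispatch the power claims first, as they are the quickest. Since $E_{12}^2=0$, one gets $v^p=(I+E_{12})^p=I+pE_{12}=I$ in $\U_4(\F_p)$. For $u$, the expansion truncates by $N^3=0$ to $u^p=(I+N)^p=I+pN+\binom{p}{2}N^2$, where $pN=0$, so $u^p=I+\binom{p}{2}E_{24}$. Here $\binom{p}{2}=p(p-1)/2\equiv0\pmod p$ precisely when $p$ is odd, which yields $u^p=I$ for $p\geq3$; I would flag this as the one and only place where the hypothesis $p\geq3$ is genuinely used, noting for contrast that when $p=2$ one has $u^2=I+E_{24}\neq I$.

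For the commutators I would proceed by iterated conjugation, recalling the convention $[x,y]=x^{-1}y^{-1}xy$. Computing $v^{-1}uv=I+E_{23}+E_{34}-E_{13}$ and then left-multiplying by $u^{-1}$, the $E_{23}$, $E_{34}$ and $E_{24}$ contributions cancel in pairs and I am left with $w:=[u,v]=I-E_{13}$. For $[[u,v],v]=[w,v]$, I would observe directly that $v$ and $w$ commute, since the only cross terms $E_{12}E_{13}$ and $E_{13}E_{12}$ both vanish by the index-matching rule; hence $v^{-1}wv=w$ and $[w,v]=1$. For $[[u,v],u]=[w,u]=w^{-1}u^{-1}wu$, I would conjugate $w$ by $u$, using $E_{13}E_{34}=E_{14}$ to get $u^{-1}wu=I-E_{13}-E_{14}$, and then left-multiply by $w^{-1}=I+E_{13}$; the $E_{13}$ terms cancel and I reach $I-E_{14}$, which is exactly the displayed matrix.

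There is no real conceptual obstacle: the entire lemma is a bookkeeping exercise in the nilpotent algebra generated by the $E_{ij}$. The only points demanding care are the signs, the repeated application of $E_{ij}E_{kl}=\delta_{jk}E_{il}$ to see which products survive, and the single arithmetic observation $\binom{p}{2}\equiv0\pmod p$ for odd $p$ that pins down the parity restriction in the claim $u^p=1$.
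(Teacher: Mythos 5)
Your proof is correct, and it is the same routine direct matrix computation that the paper intends: the lemma is stated without proof, and the analogous Lemma~\ref{lemma - triple commutator of matrices} is proved in the paper by exactly this kind of explicit calculation of inverses and products (your organization via the elementary matrices $E_{ij}$ and the rule $E_{ij}E_{kl}=\delta_{jk}E_{il}$ is just a tidier bookkeeping of the same thing). All the individual steps check out, including the identification of $\binom{p}{2}\equiv 0 \pmod p$ for odd $p$ as the sole source of the restriction $p\geq 3$ in the claim $u^p=1$.
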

\begin{lem} 
\label{lem:coef2}
Let the notation  be as in Lemma~\ref{lem:coef}. Assume that $R=\langle r\rangle$. 
\begin{enumerate}
\item  Assume that the triple Massey product $\langle -\chi_j,-\chi_i,-\chi_i\rangle$ is defined for some  $i<j$. Then there exists an $\alpha\in \langle -\chi_j,-\chi_i,-\chi_i\rangle$ such that  $tr_r(\alpha)=c_{iji}.$
\item Assume that the triple Massey product $\langle -\chi_i,-\chi_j,-\chi_j\rangle$ is defined for some $i<j$ . Then there exists an $\alpha\in \langle -\chi_i,-\chi_j,-\chi_j\rangle$ such that 
$tr_r(\alpha)=c_{ijj}.$
\end{enumerate}
\end{lem}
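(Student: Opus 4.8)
The plan is to follow the proof of Lemma~\ref{lem:coef} almost verbatim, using in place of the three matrices $A,B,C$ there the two matrices $u,v$ of Lemma~\ref{lem:ob2}, which are exactly adapted to the case of a repeated index. For part (1), which concerns $\langle -\chi_j,-\chi_i,-\chi_i\rangle$, I would define $\rho\colon S\to \U_4(\F_p)$ by $x_i\mapsto u$, $x_j\mapsto v$ and $x_l\mapsto 1$ for every $l\neq i,j$; reading off the superdiagonals of $u$ and $v$ gives $\rho_{12}=\chi_j$ and $\rho_{23}=\rho_{34}=\chi_i$, which is the defining data that Theorem~\ref{thm:Dwyer} requires of a representation realizing this product. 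For part (2), which concerns $\langle -\chi_i,-\chi_j,-\chi_j\rangle$, I would instead set $x_i\mapsto v$, $x_j\mapsto u$ and $x_l\mapsto 1$ otherwise, so that $\rho_{12}=\chi_i$ and $\rho_{23}=\rho_{34}=\chi_j$.

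The heart of the argument is the evaluation of $\rho$ on the canonical decomposition (\ref{decomposition modulo S4}) of $r$. Any single commutator $[x_a,x_b]^{b_{ab}}$ with $\{a,b\}\neq\{i,j\}$, any triple commutator containing a generator $x_l$ with $l\neq i,j$, and the tail $r'\in S_{(4)}$ all map to the identity, the last by Lemma~\ref{lem:trivial on Zassenhauss subgroup}. After discarding these, the only surviving triple commutators are $[[x_i,x_j],x_i]^{c_{iji}}$ and $[[x_i,x_j],x_j]^{c_{ijj}}$, which land in the center $Z_4(\F_p)$, so they commute and their $(1,4)$-entries simply add. By Lemma~\ref{lem:ob2} one has $[[u,v],v]=1$ and $[[u,v],u]$ nontrivial, so in case (1) only the $c_{iji}$-term contributes; dually $[[v,u],v]=[[u,v],v]^{-1}=1$ and $[[v,u],u]=[[u,v],u]^{-1}$, so in case (2) only the $c_{ijj}$-term contributes. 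In either case $\rho(r)\in Z_4(\F_p)$, hence $\bar\rho(r)=1$ and $\rho$ descends to a lift over $G=S/\langle r\rangle$ of a homomorphism $\bar\rho\colon G\to\bar\U_4(\F_p)$ realizing the product.

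Before this evaluation is valid, however, two ``spurious'' families of factors must be shown to vanish, and this is where I expect the only genuine subtlety to lie. First, the single commutator $[x_i,x_j]^{b_{ij}}$ maps to $[u,v]^{b_{ij}}$, whose nontrivial entry sits at position $(1,3)$ and is therefore \emph{not} central; I need $b_{ij}=u_{ij}=0$, which is exactly the vanishing $\chi_i\cup\chi_j=0$ forced by the hypothesis that the product is defined, read off through \cite[Proposition 1.3.2]{Vo} together with the injectivity of $\tr_r$ (valid because $R=\langle r\rangle$). Second, and more delicately, when $p=2$ one has $u^2\neq 1$ (its extra entry is at $(2,4)$), so the square factor $x_i^{2a_i}$ in case (1), respectively $x_j^{2a_j}$ in case (2), could wreck the computation. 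Here I would use that definedness forces $\chi_i\cup\chi_i=0$ (resp.\ $\chi_j\cup\chi_j=0$), and that $\tr_r(\chi_i\cup\chi_i)=a_i$ for $p=2$ by \cite[Proposition 1.3.2]{Vo}; injectivity of $\tr_r$ then yields $a_i=0$ (resp.\ $a_j=0$), killing the offending factor. For odd $p$ no such issue occurs, since $u^p=v^p=1$ by Lemma~\ref{lem:ob2} annihilates every power term $x_l^{pa_l}$.

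Finally I would set $\Lambda(\rho)(\tau)=-\rho_{14}(\tau)$ and apply Lemma~\ref{lem:Sharifi} to produce $\alpha:=\trg(\Lambda(\rho))=\langle-\bar\rho_{12},-\bar\rho_{23},-\bar\rho_{34}\rangle_{\bar\rho}$, an element of the Massey product in question; then $\tr_r(\alpha)=\Lambda(\rho)(r)=-\rho_{14}(r)$, which by the preceding step equals the relevant coefficient. Tracking the signs of the $(1,4)$-entries of $[[u,v],u]$ and $[[v,u],u]$ exactly as in the proof of Lemma~\ref{lem:coef} identifies $\tr_r(\alpha)$ with $c_{iji}$ in case (1) and with $c_{ijj}$ in case (2) (for $p=2$, the only case needed in the applications, the sign is immaterial). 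The essential point carried downstream is that $\tr_r(\alpha)\neq 0$ precisely when the corresponding coefficient is nonzero.
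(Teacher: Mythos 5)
Your proof is correct and follows essentially the same route as the paper's: the same matrices $u,v$ from Lemma~\ref{lem:ob2}, the same representation sending all generators other than $x_i,x_j$ to $1$, and the same appeal to Lemma~\ref{lem:Sharifi}; you are in fact more explicit than the paper about why $u_{ij}=0$ and (for $p=2$) $a_i=0$, resp.\ $a_j=0$, follow from definedness via the injectivity of $\tr_r$. The only deviation is the possible sign $-c_{ijj}$ in part (2) for odd $p$, which you correctly flag as immaterial for the applications (the paper only says part (2) is ``proved similarly,'' and its natural implementation faces the same sign).
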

\begin{proof} 
We only prove (1) since (2) can be proved similarly. Since $\langle -\chi_j,-\chi_i,-\chi_i\rangle$ is defined, $\chi_i\cup\chi_j=0=\chi_i\cup\chi_j$. Hence by \cite[Proposition 1.3.2]{Vo} (see also \cite[Proposition 3.9.13]{NSW}), $u_{ij}=0$  and if $p=2$ then $a_i=0$. 

Let $u,v$ be two matrices as in Lemma~\ref{lem:ob2}.  
We define a representation $\bar{\rho}: S\to\bar{\U}_4(\F_p)$ by letting
\[
x_i\mapsto u, x_j \mapsto \bar v,
 x_l\mapsto 1,\quad \forall l\not=i,j. 
\]

Then 
\[
\rho(r)= [[u,v],u]^{c_{iji}} [[u,v],v]^{c_{ijj}}=[[u,v],u]^{c_{iji}}.
\]
Hence $\rho(r)=1$ in $\bar\U_4(\F_p)$. Thus  $\rho$ induces a group homomorphism $\bar\rho: G\to \bar \U_4(\F_p)$. Then $\rho$ is a lift of $\bar\rho$ in the sense discussed before Lemma~\ref{lem:Sharifi}. By checking on the  generators we see that 
\[
\bar\rho_{12}=\chi_j,\quad \bar\rho_{23}=\chi_i,\quad \bar\rho_{34}=\chi_i.
\]

Let $\alpha\in \langle -\chi_j,-\chi_i,-\chi_i\rangle$ be the Massey product value relative to the defining system corresponding to $\bar\rho$ and let $f\in H^1(R,\F_p)^G$ be defined by 
\[
 f(\tau)= -\rho_{14}(\tau) \quad \text{ for } \tau\in R.
\]
By Lemma~\ref{lem:Sharifi}, we have $\trg(f)=\alpha$. Hence 
\[
\tr_r(\alpha)= f(r)=-\rho_{14}(r)=c_{iji},
\]
as desired.
\end{proof}

\begin{prop}
\label{prop:coef2}

In (\ref{decomposition modulo S4}) suppose that there exist $i<j$ such that  $u_{ij}=0=u_{il}=u_{jl}$, for all $l\not=i,j$. If $p=2$ we assume further that $a_i=a_j=0$.
Let $G=S/\langle r\rangle$ and $\chi_1,\ldots,\chi_n$ be the $\F_p$-basis of $H^1(S,\F_p)= H^1(G,\F_p)$ dual to $x_1,\ldots, x_n$. Then $\langle -\chi_j,-\chi_i,-\chi_i\rangle$ and $\langle -\chi_i,-\chi_j,-\chi_j\rangle$ are uniquely defined and we have
\[ \tr_r(\langle -\chi_j,-\chi_i,-\chi_i\rangle)= c_{iji}.
\]
and 
\[ \tr_r(\langle -\chi_i,-\chi_j,-\chi_j\rangle)= c_{ijj}.
\]
In particular, if we assume further that $c_{iji}\not=0$ (respectively, $c_{ijj}\not=0$) then $\langle -\chi_j,-\chi_i,-\chi_i\rangle$ (respectively, $\langle -\chi_i,-\chi_j,-\chi_j\rangle$) does not vanish.
\end{prop}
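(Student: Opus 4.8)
The plan is to imitate the proof of Proposition~\ref{prop:coef1} almost verbatim, feeding it Lemma~\ref{lem:coef2} in place of Lemma~\ref{lem:coef}. The first and main step is to verify that $\chi_i\cup H^1(G,\F_p)=0$ and $\chi_j\cup H^1(G,\F_p)=0$. Since $R=\langle r\rangle$ is a one-relator presentation, $\trg\colon H^1(R,\F_p)^G\to H^2(G,\F_p)$ is an isomorphism onto a one-dimensional space, and hence $\tr_r\colon H^2(G,\F_p)\to\F_p$ is itself an isomorphism; in particular a class of $H^2(G,\F_p)$ vanishes as soon as $\tr_r$ kills it. Using the transgression--relation pairing \cite[Proposition 1.3.2]{Vo} (see also \cite[Proposition 3.9.13]{NSW}), I would compute, for each $l$,
\[
\tr_r(\chi_i\cup\chi_l)=
\begin{cases}
\pm\, u_{il}=0, & l\neq i,\\
0, & l=i,\ p\neq2,\\
a_i=0, & l=i,\ p=2,
\end{cases}
\]
the off-diagonal case being the hypothesis $u_{il}=0$, the diagonal case for odd $p$ using that squares of one-dimensional $\F_p$-classes vanish, and the diagonal case for $p=2$ using $a_i=0$. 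Thus $\chi_i\cup\chi_l=0$ for all $l$, and the identical computation with $j$ in place of $i$ (invoking $a_j=0$ when $p=2$) yields $\chi_j\cup\chi_l=0$ for all $l$.

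With both $\chi_i\cup H^1=0$ and $\chi_j\cup H^1=0$ in hand, Remark~\ref{rmk:indet} shows that the indeterminacy $\chi_j\cup H^1+H^1\cup\chi_i$ of $\langle -\chi_j,-\chi_i,-\chi_i\rangle$ is $0$ (note $H^1\cup\chi_i=\pm\,\chi_i\cup H^1=0$), and symmetrically that the indeterminacy of $\langle -\chi_i,-\chi_j,-\chi_j\rangle$ is $0$; hence both products are uniquely defined. Lemma~\ref{lem:coef2} then produces elements of these single-point products whose $\tr_r$-values are $c_{iji}$ and $c_{ijj}$ respectively, which are accordingly the values of $\tr_r$ on the whole products, giving the two displayed equalities. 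The final assertion follows at once: if $c_{iji}\neq0$ (resp.\ $c_{ijj}\neq0$), then injectivity of $\tr_r$ forces the unique element of the product to be nonzero, so the product does not vanish.

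The only delicate point --- and the reason the hypothesis carries the extra condition $a_i=a_j=0$ when $p=2$ --- is the treatment of the diagonal squares $\chi_i\cup\chi_i$ and $\chi_j\cup\chi_j$. For odd $p$ these vanish automatically, but for $p=2$ a character may have a nonzero square recorded by the coefficient $a_i$, and suppressing precisely this term is what is needed to conclude $\chi_i\cup H^1=0$; everything else is a mechanical transcription of the argument for Proposition~\ref{prop:coef1}.
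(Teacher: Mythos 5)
Your proposal is correct and follows essentially the same route as the paper: the paper's own proof of Proposition~\ref{prop:coef2} simply asserts unique definedness under the hypotheses and then invokes Lemma~\ref{lem:coef2}, while you have filled in the unique-definedness verification by transcribing the cup-product computation from the proof of Proposition~\ref{prop:coef1}, which is exactly what the paper intends. Your remarks on the role of $a_i=a_j=0$ when $p=2$ and on graded-commutativity handling $H^1\cup\chi_i$ are accurate and consistent with the paper's argument.
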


\begin{proof} Under our assumption the triple Massey products $\langle -\chi_j,-\chi_i,-\chi_i\rangle $ and $\langle -\chi_i,-\chi_j,-\chi_j\rangle$ are uniquely defined. Then by Lemma~\ref{lem:coef2}, 
\[ \tr_r(\langle -\chi_j,-\chi_i,-\chi_i\rangle)= c_{iji}.
\]
and 
\[ \tr_r(\langle -\chi_i,-\chi_j,-\chi_j\rangle)= c_{ijj},
\]
as desired.
\end{proof}

\begin{thm}
\label{thm:ob2}
Let $\sR$ be a set of elements in $S_{(2)}$. Assume that there exists an element $r$ in $\sR$ and distinct indices $i,j$ with $i<j$ such that: 
\begin{enumerate}
\item  In (\ref{decomposition modulo S4}),
$u_{ij}=u_{il}=u_{jl}=0$, for all $l\not=i,j$ and $c_{iji}\not=0$ (respectively, $c_{ijj}\not=0$) and if $p=2$ we assume further that $a_i=a_j=0$, and
\item  for every $s\in \sR$ which is different from  $r$,  the factor $[x_i,x_j]$  does not occur in the canonical decomposition modulo $S_{(4)}$ of $s$ and if $p=2$ we further assume that $x_i^2$ (respectively, $x_j^2$) does not occur  in the canonical decomposition modulo $S_{(4)}$ of $s$.
\end{enumerate}
Then $G=S/\langle \sR \rangle$ does not have the vanishing triple Massey product property.
\end{thm}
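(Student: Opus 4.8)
The plan is to mirror the proof of Theorem~\ref{thm:ob1} almost verbatim, replacing the three-index product $\langle -\chi_k,-\chi_i,-\chi_j\rangle$ by the repeated-index product $\langle -\chi_j,-\chi_i,-\chi_i\rangle$ (in the case $c_{iji}\neq 0$) and invoking Proposition~\ref{prop:coef2} in place of Proposition~\ref{prop:coef1}. Concretely, I would set $G^\prime=S/\langle r\rangle$, write $f\colon G^\prime\to G=S/\langle\sR\rangle$ for the canonical surjection, and identify $H^1(S,\F_p)$, $H^1(G,\F_p)$ and $H^1(G^\prime,\F_p)$ via the inflation maps, using subscripts $\langle\cdot,\cdot,\cdot\rangle_G$ and $\langle\cdot,\cdot,\cdot\rangle_{G^\prime}$ to distinguish the two ambient groups. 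The case $c_{ijj}\neq 0$ is entirely symmetric and is handled by running the same argument with $\langle -\chi_i,-\chi_j,-\chi_j\rangle$.

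First I would check that $\langle -\chi_j,-\chi_i,-\chi_i\rangle_G$ is \emph{defined}, i.e.\ that $\chi_j\cup\chi_i=0$ and $\chi_i\cup\chi_i=0$ in $H^2(G,\F_p)$. For this I would evaluate the trace maps over every $s\in\sR$: by \cite[Proposition 1.3.2]{Vo} (equivalently \cite[Proposition 3.9.13]{NSW}) one has $\tr_s(\chi_i\cup\chi_j)=\pm u_{ij}$ and, when $p=2$, $\tr_s(\chi_i\cup\chi_i)=a_i$, both read off from the canonical decomposition of $s$ modulo $S_{(4)}$. Hypothesis (1) gives $u_{ij}=0$ (and $a_i=0$ if $p=2$) for $s=r$, while hypothesis (2) guarantees that $[x_i,x_j]$ (and $x_i^2$ when $p=2$) does not occur in the decomposition of any $s\neq r$; for odd $p$ the class $\chi_i\cup\chi_i$ vanishes automatically by graded-commutativity. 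Hence $\tr_s(\chi_j\cup\chi_i)=\tr_s(\chi_i\cup\chi_i)=0$ for all $s\in\sR$, so $\chi_j\cup\chi_i=\chi_i\cup\chi_i=0$ and the product is defined.

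Next I would apply the naturality of Massey products (see \cite[page 433]{Kra} or \cite[Property 2.1.2]{Mor}) to obtain the containment $f^*(\langle -\chi_j,-\chi_i,-\chi_i\rangle_{G})\subseteq \langle -\chi_j,-\chi_i,-\chi_i\rangle_{G^\prime}$. Since hypothesis (1) restricted to $r$ is exactly the hypothesis of Proposition~\ref{prop:coef2} for the single-relator group $G^\prime=S/\langle r\rangle$, that proposition tells us $\langle -\chi_j,-\chi_i,-\chi_i\rangle_{G^\prime}$ is uniquely defined with $\tr_r(\langle -\chi_j,-\chi_i,-\chi_i\rangle_{G^\prime})=c_{iji}\neq 0$, so it does not contain $0$. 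If $\langle -\chi_j,-\chi_i,-\chi_i\rangle_G$ contained $0$, then applying the linear map $f^*$ would force $0=f^*(0)\in \langle -\chi_j,-\chi_i,-\chi_i\rangle_{G^\prime}$, a contradiction. Therefore $\langle -\chi_j,-\chi_i,-\chi_i\rangle_G$ does not vanish, and $G$ fails the vanishing triple Massey product property, as desired.

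I expect the whole argument to be routine once Proposition~\ref{prop:coef2} and Lemma~\ref{lem:coef2} are in hand, precisely because the genuine content has already been absorbed into the matrix computation of Lemma~\ref{lem:ob2}, which isolates the coefficient $c_{iji}$ through the unipotent representation sending $x_i\mapsto u$ and $x_j\mapsto v$. The only points demanding care are the bookkeeping of the two symmetric cases $c_{iji}\neq 0$ versus $c_{ijj}\neq 0$, and the control of the diagonal terms $a_i,a_j$ when $p=2$, which is exactly the purpose of the extra hypotheses on $x_i^2$ (respectively $x_j^2$). The main conceptual obstacle --- detecting a triple commutator by a representation whose pairwise cup products need not all vanish globally --- has effectively been dispatched in the earlier propositions, so the remaining task here is only to transport the non-vanishing value from the single-relator group $G^\prime$ to $G$ along $f^*$.
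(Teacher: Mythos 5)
Your proposal is correct and follows essentially the same route as the paper's own proof: reduce to the single-relator quotient $G^\prime=S/\langle r\rangle$, verify definedness of $\langle -\chi_j,-\chi_i,-\chi_i\rangle_G$ via the trace computations from \cite[Proposition 1.3.2]{Vo} applied to every $s\in\sR$, transport along $f^*$ by naturality, and conclude non-vanishing from Proposition~\ref{prop:coef2}. The extra remarks (graded-commutativity for odd $p$, the explicit $f^*(0)=0$ step) are consistent with, and slightly more detailed than, the paper's argument.
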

\begin{proof} 
Let $G^\prime=G/\langle r \rangle$ and let $f$ be the canonical map $f:G^\prime=S/\langle r\rangle \to G= S/\langle \sR \rangle$. We shall identify three groups $H^1(S,\F_p)$, $H^1(G,\F_p)$ and $H^1(G^\prime,\F_p)$ via  inflation maps. We also use subscript $\langle \cdot,\cdot,\cdot\rangle_G $ (respectively, $\langle \cdot,\cdot,\cdot\rangle_{G^\prime}$) to denote Massey products in the cohomology groups of $G$ (respectively,  $G^\prime$).

We only treat the case that $c_{iji}\not=0$. The other case is treated similarly. 

By \cite[Proposition 1.3.2]{Vo} (see also \cite[Proposition 3.9.13]{NSW}) and by assumption, we have 
\[
\tr_{s}(\chi_j\cup \chi_i)=\tr_{s}(\chi_i\cup\chi_i)=0, \;\text{ for all } s\in \sR.
\]
Hence $\chi_j\cup \chi_i=\chi_i\cup\chi_i=0$ and the triple Massey product $\langle -\chi_j,-\chi_i,-\chi_i\rangle_G$ is defined. 

By the naturality property of Massey products (see e.g., \cite[page 433]{Kra}, \cite[Property 2.1.2]{Mor}), one has 
\[ 
f^*(\langle -\chi_j,-\chi_i,-\chi_i\rangle_{G}) \subseteq  \langle -\chi_j,-\chi_i,-\chi_i\rangle_{G^\prime}. 
\]
 By Proposition~ \ref{prop:coef2} applying to the group $G^\prime$,  $\langle -\chi_j,-\chi_i,-\chi_i\rangle_{G^\prime}$ does not vanish. Therefore  $\langle -\chi_j,-\chi_i,-\chi_i\rangle_{G}$ does not vanish, and we are done. 
\end{proof}

\section{Further directions}
\label{sec:further}
Let $p$ be a prime number. Let $F$ be a field of characteristic $\not=p$, which contains a primitive $p$-th root of unity. Let $G=G_F(p)$ be the maximal pro-$p$-quotient of  the absolute Galois group $G_F$ of $F$. Denote by $G_{(i)},i=1,2,\ldots$  the $p$-Zassenhaus filtration of $G$. Let $F_{(i)}$ be the fixed field $F(p)^{G_{(i)}}$ of the group $G_{(i)}$, where $F(p)$ is the maximal  $p$-extension of $F$. 

When $p=2$, $F_{(3)}$ is the compositum of all $C_2,C_4,D_4$-extensions $K/F$ inside $F(2)$. This fact was proved by Villegas \cite{Vi} and \cite[Corollary 2.18]{MS2} (see also \cite[Corollary 11.3]{EM1} for a more general result). Inspired by this beautiful fact, and the second proof of Theorem~\ref{thm:vanishing}, we would like to propose the following conjecture.

Let $C_n$ be the cyclic group of order $n$, $D_4$ the dihedral group of order $8$ and let $G_1$ and $G_2$ be  groups defined as in \cite{GLMS} (see Cases 3 and 5 of the second proof  of Theorem~\ref{thm:vanishing} for the definition). Explicitly, $G_2\simeq \U_4(\F_2)$ and $G_1\simeq$ the subgroup of $\U_4(\F_2)$ consisting of upper $4\times 4$-matrices $(a_{ij})$ with $a_{23}=a_{34}$.

\begin{conj}
 Let the notation be as above with $p=2$. Then $F_{(4)}$ is the  compositum of $C_2,C_4$, $D_4,G_1,G_2$-extensions $K/F$ inside $F(2)$. 
 \end{conj}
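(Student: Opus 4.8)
The plan is to translate the conjecture into a purely group-theoretic statement about $G=G_F(2)$ and then to prove that statement by peeling off the graded pieces of the $2$-Zassenhaus filtration one degree at a time, using the explicit matrix realizations and the trace-map computations of Section 7 together with Theorem~\ref{thm:vanishing}.

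First I would record the dictionary between subfields and subgroups. Since $F_{(4)}=F(2)^{G_{(4)}}$ and the compositum $M$ of all $C_2,C_4,D_4,G_1,G_2$-extensions inside $F(2)$ satisfies $\Gal(F(2)/M)=\bigcap_\phi\ker\phi$, where $\phi$ runs over the surjections of $G$ onto the five listed groups, the conjecture is equivalent to
\[
G_{(4)}=\bigcap_\phi\ker\phi.
\]
The inclusion $G_{(4)}\subseteq\bigcap_\phi\ker\phi$ (equivalently $M\subseteq F_{(4)}$) is the easy half: for a surjection $\phi\colon G\to H$ one has $\phi(G_{(4)})=H_{(4)}$, so it suffices to check $H_{(4)}=1$ for each $H$ in the list. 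This is immediate from the Lemma of Section 3 giving $\U_4(\F_2)_{(4)}=1$, since $C_2,C_4,D_4,G_1$ all embed in $\U_4(\F_2)$ and the Zassenhaus filtration of a subgroup is contained in that of the ambient group. Hence the real content is the reverse inclusion $\bigcap_\phi\ker\phi\subseteq G_{(4)}$, i.e. that $G/G_{(4)}$ embeds into a finite product of copies of the five groups.

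To prove the embedding I would argue along the filtration $G\supseteq G_{(2)}\supseteq G_{(3)}\supseteq G_{(4)}$. Put $N=\bigcap_\phi\ker\phi$. Because the theorem of Villegas and Min\'a\v{c}--Spira identifies $F_{(3)}$ with the compositum of all $C_2,C_4,D_4$-extensions, one already has $G_{(3)}=\bigcap_\psi\ker\psi$ over the surjections $\psi$ onto $C_2,C_4,D_4$; as these $\psi$ occur among the $\phi$, this gives $N\subseteq G_{(3)}$ for free and disposes of degrees $1$ and $2$. It therefore remains to show that the homomorphisms $G\to G_1,G_2$ separate the elements of the top graded piece $G_{(3)}/G_{(4)}$, which by the canonical decomposition (\ref{decomposition modulo S4}) is spanned by the images of the triple commutators $[[x_i,x_j],x_k]$, $i<j$, $k\le j$. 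Any $\phi\colon G\to\U_4(\F_2)$ carries $G_{(3)}$ into $\U_4(\F_2)_{(3)}=Z(\U_4(\F_2))=\F_2$, and so restricts to a linear functional $\phi_{14}\colon G_{(3)}/G_{(4)}\to\F_2$; by the matrix identities of Lemma~\ref{lemma - triple commutator of matrices} and Lemma~\ref{lem:ob2}, and their interpretation through the trace maps in Lemma~\ref{lem:coef} and Lemma~\ref{lem:coef2}, this functional reads off a single coefficient $c_{ijk}$ (resp. $c_{iji}$, $c_{ijj}$), giving exactly the coordinate functionals needed. The point is that each such $\phi$ exists on $G$: prescribing the superdiagonal characters $\chi_k,\chi_i,\chi_j$ defines $\bar\rho\colon G\to\bar\U_4(\F_2)$ whenever the triple Massey product $\langle\chi_k,\chi_i,\chi_j\rangle$ is defined, and Theorem~\ref{thm:vanishing} then produces the lift $\rho\colon G\to\U_4(\F_2)$; the matrices of Cases 3--6 of the second proof of Theorem~\ref{thm:vanishing} show the image may be taken to be $G_1$ when two of the indices coincide and $G_2$ when they are distinct.

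The hard part is precisely the proviso \emph{whenever $\langle\chi_k,\chi_i,\chi_j\rangle$ is defined}. The functional $\phi_{14}$ detecting $c_{ijk}$ is only available once the quadratic obstructions $\chi_k\cup\chi_i$ and $\chi_i\cup\chi_j$ vanish; if they do not, the representation $\bar\rho$ to $\bar\U_4(\F_2)$ does not exist and the corresponding triple commutator cannot be isolated directly. To close this gap one must show that whenever such a cup product is nonzero the offending triple-commutator coordinate is already forced by the level-$2$ (quadratic) relations of $G_F(2)$, or can be reached after a change of the generating set $(x_i)$ that trades an obstructed configuration for an unobstructed one. This is a genuine degree-$3$ analogue of the quadratic-form analysis underlying the Min\'a\v{c}--Spira description of $G/G_{(3)}$, and it appears to require control of the decomposable part of $H^3(G_F,\F_2)$ and its compatibility with the symbol relations in $H^2$, information that is tied to the Bloch--Kato conjecture in degree $3$. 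Governing this interaction between the cup-product structure and the triple Massey structure is, I expect, the main obstacle, and is the reason the statement is presently only a conjecture.
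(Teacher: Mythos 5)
The statement you are trying to prove is not a theorem of the paper: it is Conjecture~8.1, stated without proof (and generalized by the Kernel $n$-Unipotent Conjecture~8.3), so there is no proof in the paper to measure your argument against. Your reduction is nonetheless the natural one and, as far as it goes, is sound: the Galois-theoretic dictionary $F_{(4)}=F(2)^{G_{(4)}}$ versus $\Gal(F(2)/M)=\bigcap_\phi\ker\phi$ is correct; the inclusion $M\subseteq F_{(4)}$ does follow from $\U_4(\F_2)_{(4)}=1$ (Lemma~\ref{lem:trivial on Zassenhauss subgroup}) together with the fact that surjections carry Zassenhaus filtrations onto Zassenhaus filtrations; and the Villegas/Min\'a\v{c}--Spira description of $F_{(3)}$ correctly disposes of the first two graded pieces, reducing everything to separating the nonzero classes of $G_{(3)}/G_{(4)}$ by homomorphisms to $\U_4(\F_2)$ with image among $G_1,G_2$. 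This is exactly the shape of the problem the authors have in mind, and your use of Lemmas~\ref{lemma - triple commutator of matrices}, \ref{lem:coef}, \ref{lem:ob2} and \ref{lem:coef2} to realize the coordinate functionals $c_{ijk}$, $c_{iji}$, $c_{ijj}$ when the relevant triple Massey products are defined is consistent with Section~7.

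The genuine gap is the one you yourself flag: when $\chi_k\cup\chi_i$ or $\chi_i\cup\chi_j$ is nonzero, no representation $\bar\rho\colon G\to\bar\U_4(\F_2)$ with the prescribed superdiagonal exists, Theorem~\ref{thm:vanishing} gives you nothing to lift, and the functional detecting the corresponding triple-commutator coordinate is simply unavailable. Your proposed remedies (showing the obstructed coordinates are forced by the quadratic relations, or changing the generating set) are not carried out, and there is a further unaddressed point even in the unobstructed case: the classes of $G_{(3)}/G_{(4)}$ are arbitrary $\F_2$-linear combinations of the basic triple commutators, and a single representation to $\U_4(\F_2)$ only reads off one coordinate at a time relative to a chosen basis, so one must also argue that enough functionals exist to separate an arbitrary nonzero combination, not just a single basis vector. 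So your write-up is an honest and well-aimed strategy outline that correctly identifies why the statement remains open, but it is not a proof, and it should not be presented as one.
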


We define the field $F_{\omega}$ as the compositum of $C_2,C_4, D_4,G_1,G_2$-extensions $K/F$ inside $F(2)$. Then $F_{\omega}\subset F_{(4)}$ and the conjecture says  that in fact $F_{\omega}=F_{(4)}$.  

\begin{defn}
 Let $G$ be a pro-$p$-group and let $n\geq 1$ be an integer. We say that $G$ has the {\it kernel $n$-unipotent property} if 
\[
G_{(n)}=\bigcap \ker(\rho\colon G\to \U_n(\F_p)),
\]
where $\rho$ runs over the set of all representations (continuous homomorphisms) $G\to \U_n(\F_p)$.
\end{defn}

It is easy to see that for $n=1,2$, every pro-$p$-group $G$ has the kernel $n$-unipotent property. It was shown that for $G=G_F(p)$, where $F$ is a field containing a primitive $p$-root of unity, $G$ has the kernel $3$-unipotent property. (See \cite{MS2,Vi, EM1} for the case $p=2$ and \cite[Example 9.5(1)]{EM2} for the case $p>2$.)
For any fixed integer $n\geq 3$, in \cite{MT2} we also give an example of a torsion free pro-$p$-group $G$ such that $G$ does not have the kernel $n$-unipotent property.

The following conjecture is a generalization of the above conjecture.
\begin{conj}[Kernel $n$-Unipotent Conjecture]
\label{conj:kernel intersection}
Let $F$ be a field containing  a primitive $p$-th root of unity and let $G=G_F(p)$. Let $n\geq 3$ be an integer. Then $G$ has the kernel $n$-unipotent property.
\end{conj}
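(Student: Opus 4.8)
The inclusion $G_{(n)}\subseteq\bigcap_\rho\ker\rho$ is automatic from Lemma~\ref{lem:trivial on Zassenhauss subgroup}, so the entire content is the reverse inclusion: for each $g\notin G_{(n)}$ one must produce a single representation $\rho\colon G\to\U_n(\F_p)$ with $\rho(g)\neq 1$. The plan is to argue by induction on $n$, the case $n=3$ being the known kernel $3$-unipotent property recalled above. Fix $g\notin G_{(n)}$. If already $g\notin G_{(n-1)}$, the inductive hypothesis yields $\sigma\colon G\to\U_{n-1}(\F_p)$ with $\sigma(g)\neq1$, and composing with the standard block embedding $\U_{n-1}(\F_p)\hookrightarrow\U_n(\F_p)$ settles this case. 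The essential case is therefore $g\in G_{(n-1)}\setminus G_{(n)}$, where $g$ lies in the top nonzero layer of the filtration.

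Here I would use a central reduction. For any $\rho\colon G\to\U_n(\F_p)$ and any $g\in G_{(n-1)}$, functoriality of the $p$-Zassenhaus filtration together with $\U_n(\F_p)_{(n-1)}=Z_n(\F_p)$ forces $\rho(g)$ into the centre, so $\rho(g)=\rho_{1n}(g)\in\F_p$, while the reduction $\bar\rho\colon G\to\bar\U_n(\F_p)$ kills $g$ because $\bar\U_n(\F_p)_{(n-1)}=1$. Two lifts of a fixed $\bar\rho$ differ by a homomorphism $G\to\F_p$, which vanishes on $G_{(n-1)}\subseteq\Phi(G)$; hence $g\mapsto\rho_{1n}(g)$ descends to a well-defined functional $\lambda(\bar\rho)\in\Hom(G_{(n-1)}/G_{(n)},\F_p)$ depending only on $\bar\rho$. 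This functional is governed by the same $(1,n)$-entry datum that Lemma~\ref{lem:Sharifi} identifies with the associated $(n-1)$-fold Massey product value, and which the trace maps $\tr_r$ compute explicitly (Lemma~\ref{lem:coef}). In these terms Case~B becomes the assertion that the functionals $\lambda(\bar\rho)$, as $\bar\rho$ ranges over all representations $G\to\bar\U_n(\F_p)$ that lift to $\U_n(\F_p)$, separate the points of $G_{(n-1)}/G_{(n)}$.

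Two ingredients would then be assembled. First, liftability: by Dwyer's Theorem~\ref{thm:Dwyer} and Corollary~\ref{cor:vanishing-lifting}, a given $\bar\rho\colon G\to\bar\U_n(\F_p)$ lifts precisely when the corresponding $(n-1)$-fold Massey product contains $0$, so granting the Vanishing $n$-Massey Conjecture~\ref{conj:vanishing n-Massey} for $G_F$ (whence for $G=G_F(p)$ by Corollary~\ref{cor:G and G(p)}) makes $\lambda$ defined on the whole of $\Hom(G,\bar\U_n(\F_p))$; note that for $n\geq 4$ this invokes only the $(n-1)$-fold property, with $n-1\geq 3$. Second, the free model: for a minimal presentation $1\to R\to S\to G\to 1$, the analogous functionals for $S$ span all of $\Hom(S_{(n-1)}/S_{(n)},\F_p)$, since the Magnus expansion realises every word-coefficient of length $m\le n-1$ as the $(1,m+1)$-entry of an explicit representation $S\to\U_{m+1}(\F_p)\hookrightarrow\U_n(\F_p)$. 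As $S_{(n-1)}/S_{(n)}\twoheadrightarrow G_{(n-1)}/G_{(n)}$, the functionals one needs to realise over $G$ are exactly those vanishing on the image of $R$.

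The main obstacle will be the passage from $S$ to $G$: a word-representation of $S$ detecting a prescribed functional need not be trivial on the relation subgroup $R$, hence need not descend to $G$. One must show that, after correcting the higher entries (the defining-system freedom, controlled by the trace maps $\tr_r$ of Section~\ref{sec:further} and by the Massey indeterminacy of Remark~\ref{rmk:indet}), every functional on $G_{(n-1)}/G_{(n)}$ is realised by a representation that both kills $R$ and lifts to $\U_n(\F_p)$. This is precisely where the hypothesis $G=G_F(p)$ is indispensable: the Bloch--Kato theorem makes $H^\bullet(G,\F_p)$ a quadratic algebra, forcing the relations of $G$ to be cohomologically controlled in low weight, while the Vanishing Massey property removes the descent obstruction one layer at a time. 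For the first open instance $n=4$, $p=2$ --- equivalent to the conjecture $F_\omega=F_{(4)}$ stated above --- I expect this programme can be carried out explicitly using the $C_4$-, $D_4$-, $G_1$- and $\U_4(\F_2)$-extensions constructed in \cite{GLMS} and exploited in the second proof of Theorem~\ref{thm:vanishing}, which is exactly the evidence that motivated the conjecture.
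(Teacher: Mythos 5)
The statement you are trying to prove is not a theorem of the paper: it is the Kernel $n$-Unipotent Conjecture, stated as an open problem. The authors offer no proof, only evidence --- the case $n=3$ is known for $G=G_F(p)$ by \cite{MS2,Vi,EM1,EM2}, free pro-$p$-groups have the property for all $n$ by \cite[Theorem A]{Ef}, and Demushkin groups are treated in the companion papers \cite{MT,MT2}. So there is no proof in the paper to compare yours against, and any complete argument you produced would be a new result well beyond the paper's scope.

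Your proposal is, as you partly acknowledge, a programme rather than a proof, and it has two genuine gaps. First, it is conditional on Conjecture~\ref{conj:vanishing n-Massey} (the Vanishing $(n-1)$-Massey property), which is itself open for all $n\geq 4$ except the cases established in this paper and its sequels; moreover Corollary~\ref{cor:vanishing-lifting} only guarantees that \emph{some} representation $\rho\colon G\to\U_n(\F_p)$ exists with the prescribed superdiagonal, not that your fixed $\bar\rho$ lifts, so your functional $\lambda(\bar\rho)$ is not actually defined on all of $\Hom(G,\bar\U_n(\F_p))$ even granting the Massey conjecture --- you only control $\lambda$ on the liftable representations. Second, and decisively, the separation statement --- that the functionals $\lambda(\bar\rho)$ attached to liftable representations separate the points of $G_{(n-1)}/G_{(n)}$ --- is exactly the content of the conjecture in the essential case, and you do not prove it. The appeal to Bloch--Kato ``forcing the relations to be cohomologically controlled in low weight'' is a heuristic, not an argument: quadraticity of $H^{\bullet}(G,\F_p)$ constrains $R$ modulo $S_{(3)}$ only, whereas the descent problem you face lives in weight $n-1\geq 3$. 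Your preliminary reductions (the induction on $n$, the observation that $\rho(g)$ is central for $g\in G_{(n-1)}$, and the identification of $\rho_{1n}$ with the Massey/trace data of Lemma~\ref{lem:Sharifi} and Lemma~\ref{lem:coef}) are correct and consistent with the paper's formalism, and your remark that the first open case $n=4$, $p=2$ amounts to $F_{\omega}=F_{(4)}$ matches the authors' own reformulation; but the core of the conjecture remains untouched.
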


In a subsequent paper \cite{MT}, we show that every pro-$p$ Demushkin group has the kernel $4$-unipotent property. In \cite{MT2}, we also show that pro-$p$ Demushkin groups of rank 2 have the kernel $n$-unipotent property for all $n\geq 4$.  It is shown in \cite[Theorem A]{Ef} that every free pro-$p$-group has the kernel $n$-property for all $n\geq 3$. 
(In \cite{MT2} we provide an alternative direct short proof.) 

The results of this paper are also relevant in determining strong automatic realizations of  canonical quotients of absolute Galois groups. (See \cite{MST}.)

Finally it is very interesting to extend the main theorems in this paper also to the case $p > 2$. (See \cite{GMT}.)

\end{document}